\newtheorem{theorem}{Theorem}[section]
\newtheorem{lemma}[theorem]{Lemma}
\newtheorem{proposition}[theorem]{Proposition}
\newtheorem{corollary}[theorem]{Corollary}
\newtheorem{definition}[theorem]{Definition}
\newtheorem{remark}[theorem]{Remark}
\newcounter{as}[section]
\newcommand{\mc}[1]{{\mathcal #1}}
\newcommand{\mf}[1]{{\mathfrak #1}}
\newcommand{\bb}[1]{{\mathbb #1}}
\newcommand{\bs}[1]{{\boldsymbol #1}}
\newcommand{\ms}[1]{{\mathscr #1}}
\newcommand{\<}{\langle}
\renewcommand{\>}{\rangle}
\definecolor{bblue}{rgb}{.2,0.2,.8}
\begin{document}

\title[Steady state large deviations]{Steady state large deviations
for one-dimensional, symmetric exclusion processes in weak contact
with reservoirs}

\begin{abstract}
Consider the symmetric exclusion process evolving on an interval and
weakly interacting at the end-points with reservoirs. Denote by
$I_{[0,T]} (\cdot)$ its dynamical large deviations functional and by
$V(\cdot)$ the associated quasi-potential, defined as
$V(\gamma) = \inf_{T>0} \inf_u I_{[0,T]} (u)$, where the infimum is
carried over all trajectories $u$ such that $u(0) = \bar\rho$,
$u(T) = \gamma$, and $\bar\rho$ is the stationary density profile.  We
derive the partial differential equation which describes the evolution
of the optimal trajectory, and deduce from this result the formula
obtained by Derrida, Hirschberg and Sadhu \cite{DHS2021} for the
quasi-potential through the representation of the steady state as a
product of matrices.
\end{abstract}

\author{A. Bouley, C. Erignoux, C. Landim}

\address{Laboratoire de Mathématiques Raphaël Salem
UFR des Sciences et Techniques
Université de Rouen Normandie
Avenue de l'Université, BP.12
76801 Saint-Étienne-du-Rouvray, France
e-mail: \texttt{angele.bouley@ens-rennes.fr}}

\address{Equipe PARADYSE, Bureau B211 \\
Centre INRIA Lille Nord-Europe \\
Park Plaza, Parc scientifique de la Haute-Borne, 40 Avenue Halley\\
Bâtiment B, 59650 Villeneuve-d'Ascq \\
France.
e-mail: \texttt{clement.erignoux@inria.fr}}

\address{IMPA, Estrada Dona Castorina 110, CEP 22460 Rio de Janeiro, Brasil
and CNRS UMR 6085, Université de Rouen, France. \\
e-mail: \texttt{landim@impa.br} }

\maketitle

\section{Introduction}
\label{sec0}

Non-equilibrium steady states have attracted a lot of interest in the
last decades, as a first step towards the understanding of far from
equilibrium behavior. We refer to the reviews \cite{D2007, D2011,
BDGJL2015}, the recent works \cite{DHS2021, FGK2020} and references
therein. These states display many interesting phenomena, such as
non-local thermodynamic functionals, dynamical phase transitions and
long range correlations, \cite{DLS2002, BDLW2008, BDGJL2011}. Many of
these properties can be derived from the quasi-potential, the
functional which plays a role analogous to the free energy in
equilibrium.

We consider the symmetric exclusion process evolving in the interval
$[0,1]$ and in contact with reservoirs at the end points. In the case
of a strong interaction of the system with the reservoirs, the
boundary conditions do not appear in the thermodynamic functionals and
the effect of the boundary is not clear. To investigate the influence
of the boundaries, we examine in this article the case of weak
interactions.

With strong interactions with the reservoirs, the density at the
boundaries take immediately the value of the reservoirs densities and
remain fixed, while the density in the bulk evolves according to the
heat equation. That is to say, the density profile evolves according
to the heat equation with Dirichlet boundary conditions. Even at the
level of the dynamical large deviations, the densities at the boundary
are kept fixed, and only the density at the interior may fluctuate,
\cite{KL, BDGJL, BDGJL2003}.

In constrast, for exclusion processes with weak interactions with the
reservoirs, the densities at the boundaries evolve in time. Actually,
the particles' density $u$ solves the heat equation with Robin boundary
conditions \cite{BMNS2017}. Namely,
\begin{equation}
\label{1-06} 
\begin{cases}
\partial_tu \,=\, \Delta u \quad (t,x) \, \in\, (0,T) \,\times\, (0,1) \\
(\nabla u) (t,0) \,=\, A^{-1} [\,u(t,0)-\alpha\,] \quad t\,\in\, (0,T) \\
(\nabla u)(t,1) \,=\, B^{-1} [\, \beta - u (t,1)] \quad t\,\in\, (0,T)
\\
u(0,x) \,=\, \gamma (x)\quad x\,\in\, [0,1] \;.
\end{cases}
\end{equation}

\noindent In this formula, $\alpha$, $\beta \in (0,1)$ represent the
density at the left, right reservoirs, respectively, $A$, $B>0$ the
intensity of the interaction with the left, right reservoirs,
respectively, and $\gamma: [0,1] \to [0,1]$ the initial density
profile. Moreover, $\nabla u$ stands for the partial derivative in
space of $u$, $\partial_t u$ for its partial derivative in time and
$\Delta u$ for the Laplacian of $u$ in the space variable.

The weak interaction of the system with the boundaries also modifies
the thermodynamical variables by adding boundary terms.  The
Hamiltonian, denoted by $\ms H(\gamma, F)$, becomes
\begin{equation}
\label{1-09}
\begin{aligned}
\ms H(\gamma, F) \; =\;
\; & -\; \big\langle \nabla \gamma \, ,\, \nabla F  \big\rangle 
\;+\; \big\langle \sigma(\gamma) \,,\, \big(\nabla F\big)^2  \big\rangle \\
\; & +\; \mf b_{\alpha, A} \big(\, \gamma(0)\, ,\, F (0)\, \big)
\;+\; \mf b_{\beta, B} \big(\, \gamma(1)\, ,\, F (1)\, \big)\;.
\end{aligned}
\end{equation}
In this formula and below, $\< \,\cdot\,,\,\cdot\,\>$ represents the
scalar product in $\ms L^2([0,1])$, $\sigma: [0,1] \to \bb R$, given by
$\color{bblue} \sigma(a) = a (1-a)$, is the mobility of the exclusion
process, and for $0< \varrho <1$, $D>0$, $0<a<1$, $M\in \bb R$,
\begin{equation}
\label{4-05}
\mf b_{\varrho, D} (a,M) \;=\; \frac{1}{D}\, \Big\{ [1-a]\, \varrho \,
[e^M-1] \;+\; a\, [1-\varrho] \, [e^{-M}-1] \, \Big\}  \;.
\end{equation}
In the Hamiltonian formalism of classical mechanics, density profiles
$\gamma: [0,1] \to [0,1]$ play the role of position and external
fields $F:[0,1] \to \bb R$, the one of momentum.

The dynamical large deviations functional associated to the
Hamiltonian $\ms H$ is given by
\begin{equation}
\label{7-01b}
I_{[0,T]} (u) \; =\;
\sup_{H}
\int_0^T \big\{\,
\big\langle \partial_t u_t,  H_t  \big\rangle
\,-\, \ms H(u_t, H_t) \, \big\} \; dt \; ,
\end{equation}
where the supremum is carried over all smooth functions
$H: [0,T]\times [0,1] \to \bb R$. The functional
$I_{[0,T]} (u)$ specifies the cost of observing a fluctuation
$u(t)$, $0\le t\le T$. In particular, $I_{[0,T]} (u) =0$ if $u$ follows
the hydrodynamic equation \eqref{1-06}.

Let $\bar\rho$ be the unique stationary solution to the equation
\eqref{1-06}. That is, $\bar\rho$ is the solution to the elliptic
equation
\begin{equation}
\label{1-05}
\begin{cases}
\Delta \rho  \,=\,  0\\
(\nabla \rho) (0) \,=\, A^{-1} [\,\rho (0)-\alpha\,] \\
(\nabla \rho)(1) \,=\, B^{-1} [\, \beta - \rho (1)] \;.
\end{cases}
\end{equation}
An elementary computation yields that
$\bar\rho$ is given by
\begin{equation*}
\bar\rho(x) \;=\; \frac{\alpha (1+B) + \beta A}{1+B+A}
\;+\; \frac{(\beta-\alpha)\, x}{1+B+A}\;\cdot 
\end{equation*}
Note that $\bar\rho$ is the linear interpolation between
$\bar\rho(-A) = \alpha$ and $\bar\rho(1+B) = \beta$.

Denote by $V(\,\cdot\,)$ the quasi-potential associated to the rate
function $I_{[0,T]}(\,\cdot\,)$. It is given by
\begin{equation*}
V(\gamma) \;:=\; \inf_{T>0}\; \inf_{u(\cdot)} \; I_{[0,T]}(u) \; ,
\end{equation*}
where the infimum is carried over all paths $u$ such that
$u(0) = \bar\rho$, $u(T)=\gamma$.  The quasi-potential $V(\gamma)$
measures the minimal cost to produce a profile $\gamma$ starting from
$\bar\rho$. It is also the rate functional of the large deviations
principle for the density profile under the steady state \cite{BDGJL}.

By using a representation of the steady state of the exclusion process
as a product of matrices, Derrida, Hirschberg and Sadhu \cite{DHS2021}
proved that the quasi-potential can be expressed as
\begin{equation*}
\begin{gathered}
V (\gamma) \;=  \;
\int_{0}^1  \Big\{\, \gamma (x) \, \log
\frac {\gamma (x)}{F(x)} + \big[1 - \gamma (x)\big]
\log \frac {1- \gamma (x)}{1- F(x)}
+ \log \frac {\nabla F(x)}{[\beta - \alpha]}\,
\Big\} \; dx \\
\;+\; A\,
\ln \frac{F(0) - \alpha}{A(\beta-\alpha)}
\;+\; B\, \ln \frac{\beta - F(1)}{B (\beta-\alpha)} \;,
\end{gathered}
\end{equation*}
where $F$ solves the non linear boundary
value problem
\begin{equation}
\label{3-01}
\left\{
\begin{aligned}
& \vphantom{\Big(}
{\displaystyle \Delta F = 
\big( \gamma - F \big) \frac{\big( \nabla F \big)^2}{F(1-F)} 
\qquad \textrm{ in $(0,1)$}  }\; , \\
& \vphantom{\Big(}
{\displaystyle \nabla F(0) = A^{-1} [F(0)-\alpha] \;, \quad
\nabla F(1) = B^{-1} [\beta - F(1)] }\; . 
\end{aligned}
\right.
\end{equation}
This result extends to exclusion processes with weak interactions at
the boundaries a theorem of Derrida, Lebowitz and Speer \cite{DLS2002}
for the case with strong interactions,

In this article, we provide an alternative proof of this result, based
on the strategy delineated in \cite{BDGJL} and carried out in
\cite{BDGJL2003} for exclusion processes with strong interactions at
the boundaries.

Using the Hamiltonian formalism, we derive a formal equation for the
path which solves the variational problem \eqref{7-01b}. The optimal
trajectory corresponds to a pair $(u(t), F(t))$ which solves a system
of coupled equations, see \eqref{4-11c}--\eqref{4-08c} below. While it
might seem, at a first glance, hopeless to prove any property of the
solutions to this pair of equations, it turns out that $F_t$ evolves
according to an autonomous equation, actually, according to the
hydrodynamic equation \eqref{1-06}. This remarkable property is the
key point and permits to prove all properties needed to show that the
candidate obtained from the heuristic argument is indeed the optimal
path.

According to \cite{BDGJL}, this optimal path, which describe how the
system adjusts to create a fluctuation of the density, corresponds to
the typical trajectory for the adjoint dynamics, reversed in time. In
particular, this approach reveals the adjoint hydrodynamic
equation. It is obtained from the hydrodynamic equation by adding a
non-local drift and modifying the boundary densities and intensities
of interaction which become time-dependent (cf. Remark \ref{rm5}).

To our knowledge this is one of the few examples of an interacting
particle system whose steady state is not known explicitly and whose
quasi-potential can be computed \cite{DLS2002, BDGJL2003, B2010,
DHS2021}. Moreover, the presence of boundary terms in the
Hamiltonian-Jacobi equation for the quasi-potential modifies entirely
the analysis of thermodynamic transformations of non-equilibrium
states carried out in \cite{BGJL2012} in the case of Dirichlet
boundary conditions. This is left for a future work, together with the
emergence of dynamical phase transitions \cite{BDGJL2011} and the
static large deviations \cite{BG2004, F2009, LT2018, FLT2019}.

\section{Notation and Results}
\label{sec1}

\smallskip\noindent{\bf The model.}  We consider one-dimensional,
symmetric exclusion processes in weak contact with boundary
reservoirs. Fix $N\ge 1$, and let $\color{bblue} \mf e_N = 1/N$,
$\color{bblue} \mf r_N = 1 - (1/N)$,
$\color{bblue} \Lambda_N=\{\mf e_N ,\dots, (N-2)\, \mf e_N, \mf
r_N\}$.  The sate space is represented by
$\color{bblue} \Omega_N=\{0,1\}^{\Lambda_N}$ and the configurations by
the Greek letters $\eta$, $\xi$ so that $\eta_x$, $x\in \Lambda_N$,
represents the number of particles at site $x$ for the configuration
$\eta$.

Fix throughout this article, $\color{bblue} 0<\alpha \le \beta<1$,
$\color{bblue} A>0$, $\color{bblue} B>0$.  The generator of the Markov
process considered here, represented by
$\ms L_N = \ms L^{\alpha, A, \beta, B}_N$, is given by
\begin{equation*}
\ms L_N \;=\;  L^{\rm lb}_{N} \;+\;  L^{\rm bulk}_{N}
\;+\;  L^{\rm rb}_{N}\;.
\end{equation*}
In this formula, for every function $f:\Omega_N\to \bb R$,
\begin{equation*}
(L^{\rm bulk}_{N}  f) (\eta) \;=\; N^2\,
\sum_{x\in \Lambda^o_N} [\, f(\sigma^{x,x+\mf e_N} \eta) \,-\, f(\eta)\,] \;, 
\end{equation*}
where $\Lambda^o_N$ represents the interior of $\Lambda_N$,
$\color{bblue} \Lambda^o_N := \Lambda_N \setminus \{\mf r_N\} = \{\mf
e_N ,\dots, (N-2)\, \mf e_N\}$, and
\begin{equation*}
\begin{gathered}
(L^{\rm lb}_N f) (\eta) \;=\; \frac{N}{A}\,
\big[\, (1-\eta_{\mf e_N})\, \alpha \,+\, (1-\alpha)\, \eta_{\mf e_N}\, \big]
\big[\, f(\sigma^{\mf e_N} \eta) \,-\, f(\eta)\, \big] \;, \\
(L^{\rm rb}_N f) (\eta) \;=\; \frac{N}{B}\,
\big[\, (1-\eta_{\mf r_N})\, \beta \,+\, (1-\beta)\, \eta_{\mf r_N}\, \big]
\big[\, f(\sigma^{\mf r_N} \eta) \,-\, f(\eta)\, \big] 
\;.
\end{gathered}
\end{equation*}
From now on, we omit the subindex $N$ of $\mf e_N$.  In the
formulas above, 
\begin{equation*}
(\sigma^{x,x+\mf e}\eta)_y \;=\;
\begin{cases}
\eta_y &\mbox{ if } y \neq x,x+\mf e\\
\eta_{x+\mf e} &\mbox{ if } y=x \\
\eta_x &\mbox{ if } y=x+\mf e
\end{cases}
\quad \mbox{ and }
\quad
(\sigma^{x}\eta)_y \;=\;
\begin{cases}
\eta_y &\mbox{ if } y\neq x\\
1-\eta_x &\mbox{ if } y=x \;.
\end{cases}
\end{equation*}

For a metric space $\bb X$, denote by $\color{bblue} D([0,T], \bb X)$,
$T>0$, the space of right-continuous functions
$\mf x\colon [0,T] \to \bb X$, with left-limits, endowed with the
Skorohod topology and its associated Borel $\sigma$-algebra. The
elements of $D([0,T],\Omega_N)$ are represent by
$\color{bblue} \bs \eta (\cdot)$.

For a probability measure $\mu$ on $\Omega_N$, let
$\color{bblue} \bb P^N_\mu$ be the measure on $D([0,T],\Omega_N)$
induced by the continuous-time Markov process associated to the
generator $\ms L_N$ starting from $\mu$. When the measure $\mu$ is the
Dirac measure concentrated at a configuration $\eta\in \Omega_N$, that
is $\mu = \delta_{\eta}$, we represent
$\color{bblue} \bb P^N_{\delta_\eta}$ simply by
$\color{bblue} \bb P^N_\eta$.  Expectation with respect to $\bb P^N_\mu$,
$\bb P^N_\eta$ is denoted by $\color{bblue} \bb E^N_\mu$,
$\color{bblue} \bb E^N_\eta$, respectively. When the context permits
we remove the index $N$ from the notation.

\smallskip\noindent{\bf Hydrodynamic limit.}  Denote by
$\color{bblue} \mathscr{M}$ the set of non-negative measures on
$[0,1]$ with total mass bounded by $1$ endowed with the weak
topology. Recall that this topology is metrisable and that, with this
topology, $\ms M$ is a relatively compact space. For a continuous
function $F:[0,1] \to \bb R$ and a measure $\pi\in\ms M$, denote by
$\<\pi, F\>$ the integral of $F$ with respect to $\mu$:
\begin{equation*}
\<\pi, F\> \;=\; \int F(x) \, \pi(dx)\;.
\end{equation*}

Given a configuration $\eta\in \Omega_N$, denote by $\pi = \pi(\eta)$
the measure in $\ms M$ obtained by assigning a mass $N^{-1}$ to the
position of each particle:
\begin{equation*}
\pi \;=\; \pi (\eta) \;=\;
\frac{1}{N}\sum_{x\in\Lambda_N}\eta_x\, \delta_{x}\;.
\end{equation*}
The measure $\pi$ is called the \emph{empirical measure}.

Denote by
$\color{bblue} \bs \pi: D([0,T],\Omega_N) \to D([0,T],\ms M)$ the map
which associates to a trajectory $\bs \eta (\cdot)$ its empirical
measure: 
\begin{equation*}
\bs \pi (t) \;=\; \pi (\bs \eta(t)) \;=\;
\sum_{x\in\Lambda_N}\eta_x(t) \, \delta_{x}\;.
\end{equation*}
For a probability measure $\mu$ in $\Omega_N$, let $\bb Q^N_{\mu}$ be
the measure on $D([0,T], \ms M)$ given by
$\color{bblue} \bb Q^N_{\mu} = \bb P^N_{\mu} \,\circ\, \bs \pi^{-1}$.

The first result, due to \cite{BMNS2017}, establishes the hydrodynamic
behavior of the empirical measure.

\begin{theorem}
\label{mt1}
Fix $T>0$, a density profile $\gamma \colon [0,1] \to [0,1]$, and sequence
$(\nu^N : N\ge 1)$ of probability measures on $\Omega_N$ associated to
$\gamma$ in the sense that
\begin{equation*}
\lim_{N\to\infty}\nu^N\Big[\, 
\Big|\, \<\pi\,,\, G\> \,-\, \int_0^1 \gamma (x)\, G (x)\, dx \,\Big|
\,>\, \delta\,\Big] \;=\; 0
\end{equation*}
for all continuous functions $G\colon [0,1] \to \bb R$ and $\delta>0$.
Then, the sequence of probability measures $\bb Q^N_{\nu^N}$ converges
to the probability measure $\bb Q$ concentrated on the trajectory
$\pi(t,dx) = u(t,x)\, dx$, where $u$ is the unique weak solution to
the heat equation with Robin's boundary conditions \eqref{1-06}.
\end{theorem}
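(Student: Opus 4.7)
The plan is to follow the classical martingale approach for hydrodynamic limits of interacting particle systems with boundary reservoirs. The first step is to establish tightness of the sequence $\bb Q^N_{\nu^N}$ in $D([0,T],\ms M)$. Because $\ms M$ is compact in the weak topology, one only needs to verify Aldous' modulus-of-continuity criterion for the real processes $t \mapsto \<\bs\pi(t), G\>$ with $G$ in a countable dense subset of $C([0,1])$. This reduces to a uniform bound on $\ms L_N \<\pi^N, G\>$, which is immediate from the explicit form of the generator: two discrete summations by parts bound the bulk contribution by $\|G''\|_\infty$, while the boundary contributions are bounded by $A^{-1}\|G\|_\infty$ and $B^{-1}\|G\|_\infty$.

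The next step is to identify every subsequential limit. For $G \in C^2([0,1])$, Dynkin's formula provides the martingale
$$M_t^N(G) \;=\; \<\bs\pi(t), G\> \,-\, \<\bs\pi(0), G\> \,-\, \int_0^t \big(\ms L_N \<\bs\pi(s), G\>\big)\, ds,$$
whose quadratic variation is $O(1/N)$ and hence vanishes in $L^2$. A double discrete summation by parts rewrites $\ms L_N \<\pi^N, G\>$ as $N^{-1}\sum_{x \in \Lambda_N^o} \eta_x\, \Delta_N G(x)$ plus boundary residues coming from the Abel transforms. The key point is that these residues combine with the explicit contributions of $L_N^{\rm lb}$ and $L_N^{\rm rb}$ into a consistent discretisation of the Robin conditions: schematically, the terms proportional to $\nabla G(0)$ produce the discrete analogue of $G(0)\, A^{-1}[u(s,0) - \alpha]$, and similarly at $x = 1$. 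Passing to the limit along a subsequence, any cluster point concentrates on trajectories $\pi(t, dx) = u(t, x)\, dx$ with $u \in [0,1]$ satisfying the weak formulation of \eqref{1-06}.

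The main obstacle is to identify the boundary occupation $\eta_{\mf e_N}(s)$ (resp.\ $\eta_{\mf r_N}(s)$) with the trace $u(s, 0)$ (resp.\ $u(s, 1)$) of the limiting profile, since an element of $\ms M$ has no a priori trace. The standard route is a boundary replacement lemma: using a one-block/two-block estimate built on the Dirichlet form bound for the full dynamics, $\eta_{\mf e_N}(s)$ is replaced, after time averaging, by a mesoscopic spatial average over the first $\varepsilon N$ sites, which in turn converges to $\varepsilon^{-1}\int_0^\varepsilon u(s, x)\, dx$. The weak-interaction regime is what makes this step genuinely delicate: the boundary jump rates are of order $N$ instead of $N^2$, so the reservoir does not instantaneously thermalize the first site, and the boundary trace must be read from the bulk profile rather than equated to $\alpha$ or $\beta$. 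Once the replacement is established, uniqueness of weak solutions to \eqref{1-06}, which follows from a standard $L^2$ energy estimate in which the Robin boundary contributions carry the favourable sign, closes the argument: every limit point equals the unique weak solution, and the entire sequence $\bb Q^N_{\nu^N}$ converges.
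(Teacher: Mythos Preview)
Your proposal outlines the standard proof strategy and is essentially correct, but note that the paper itself does \emph{not} prove this theorem: it is stated as a result due to \cite{BMNS2017} and cited without proof. The paper only refers the reader to Appendix~\ref{sec05} for the definition and properties of weak solutions to \eqref{1-06}, while the hydrodynamic limit itself is taken as known input.

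Your sketch is faithful to the actual proof in \cite{BMNS2017}: tightness via Aldous' criterion and a bound on $\ms L_N \langle \pi^N, G\rangle$, Dynkin's martingale with vanishing quadratic variation, the double summation by parts producing discrete Robin boundary residues, and finally a boundary replacement lemma followed by uniqueness of weak solutions. You correctly isolate the genuinely non-trivial step, namely that in the slow-boundary regime the reservoir rate is $O(N)$ rather than $O(N^2)$, so the boundary site is not pinned at $\alpha$ or $\beta$ and its time-averaged occupation must instead be identified with the trace of the bulk profile via a one-block estimate over the first $\varepsilon N$ sites. One small caveat: to make the limit identification rigorous you also need to show that any limit point has finite energy (so that the trace $u(s,0)$, $u(s,1)$ is well defined in $\ms L^2([0,T])$), which in \cite{BMNS2017} is obtained by a standard entropy/Dirichlet-form argument; your sketch implicitly assumes this when invoking the trace.
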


We refer to Appendix \ref{sec05} for the definition of weak solutions to
equation \eqref{1-06} and some of its properties.

\smallskip\noindent{\bf Dynamical large deviations.}
For $T>0$ and positive integers $m,n$, denote by
$\color{bblue} C^{m,n}([0,T]\times[0,1])$ the space of functions
$G\colon [0,T]\times[0,1] \to\bb R$ with $m$ derivatives in time, $n$
derivatives in space which are continuous up to the boundary. Denote
by $\color{bblue} C^{m,n}_0([0,T]\times[0,1])$ the set of functions in
$C^{m,n}([0,T]\times[0,1])$ which vanish at the endpoints of $[0,1]$,
i.e.\ $G\in C^{m,n}([0,T]\times[0,1])$ belongs to
$C^{m,n}_0([0,T]\times[0,1])$ if and only if $G(t,0) = G(t,1)=0$ for
all $t\in[0,T]$.

Denote by $\ms M_{\rm ac}$ the subset of $\ms M$ of all measures which
are absolutely continuous with respect to the Lebesgue measure and
whose density takes values in the interval $[0,1]$:
$\color{bblue} \ms M_{\rm ac} = \{\pi \in \ms M : \pi(dx) = \gamma (x)
\, dx \;\;\text{and}\;\; 0\,\le\, \gamma(x) \,\le\, 1\,\}$.

For $T>0$, let the {\it energy}
$\mc Q_{[0,T]}: D([0,T], \ms M_{\rm ac}) \to [0,\infty]$ be given by
\begin{align*}
& \mc Q_{[0,T]} (\pi) \;=\;  \\
& \quad
\sup_{G} \Big\{ \int_0^Tdt \int_{0}^1  u(t,x) \,
(\nabla G)(t,x) \; dx \;-\; \frac 12  \int_0^Tdt \int_{0}^1 
\sigma (u(t,x)) \, G(t,x)^2  \; dx\, \Big\}\;, 
\end{align*}
where $\pi(t,dx) = u (t,x)\, dx$ and the supremum is carried over all
smooth functions $G: [0,T]\times (0,1) \to\bb R$ with compact support.

\begin{remark}
\label{rm2}
Hereafter, we abuse of notation writing $\gamma \in \ms M_{\rm ac}$ to
mean that the measure $\gamma(x)\, dx$ belongs to $\ms M_{\rm ac}$.
Moreover, for functionals
$\Phi \colon D([0,T], \ms M_{\rm ac}) \to \bb R$,
$W: \ms M_{\rm ac} \to \bb R$, we often write $\Phi(u)$, $W(\gamma)$
instead of $\Phi(\pi)$, $W(\mu)$ when $\pi(t,dx) = u(t,x)\, dx$,
$\mu(dx) = \gamma (x)\, dx$.
\end{remark}

\noindent{\bf Notational convention:} For a function
$v: I \times [0,1] \to \bb R$, where $I$ is a subset of $\bb R$, $v_t$
and $v(t)$, $t\in I$, represent the function $w: [0,1] \to \bb R$
defined by $w(x) = v(t,x)$. We use the letters $\gamma$, $\phi$,
$\psi$ to represent densities [elements of $\ms M_{\rm ac}$], $u$,
$v$, $w$ for trajectories of densities [elements of
$D(\bb R_+, \ms M_{\rm ac})$], and $F$, $G$, $H$ for external fields,
usually functions in $C(\bb R_+ \times [0,1])$. \smallskip

By \cite[Lemma 4.1]{BLM09}, the energy $\mc Q_{[0,T]}$ is convex and
lower semicontinuous. Moreover, if $\mc Q_{[0,T]}(u)$ is finite, $u$
has a generalized space derivative, denoted by $\nabla u$, and
\begin{equation*}
\mc Q_{[0,T]} (u) \;=\; \frac 12 \int_0^T dt\, \int_{0}^1 
\frac{(\nabla u_t)^2}{\sigma (u_t)} \; dx \;\cdot 
\end{equation*}

Fix a trajectory $\pi \in D([0,T], \ms M_{\rm ac})$,
$\pi(t,dx) = u(t,x) \, dx$, with finite energy,
$\mc Q_{[0,T]} (u) <\infty$. In particular,
$\int_{0}^T dt \int_0^1 (\nabla u_t)^2 \; dx$ is finite. By
\cite[Assertion 48, page 1030]{Zeid80}, the trace of $u$ at the spatial
boundary of the cylinder $\color{bblue} \Omega_T = [0,T] \times [0,1]$
is well defined. That is, the maps $t\mapsto u(t, 0)$,
$t\mapsto u(t, 1)$ are well defined and belong to
$\ms L^2([0,T])$. Moreover, since for almost all $t\in [0,T]$,
$\int_{0}^1 (\nabla u_t)^2 \; dx$ is finite, for these values of $t$,
$u(t,\cdot)$ is H\"older-continuous and $u(t,0)$ and $u(t,1)$ are well
defined.

Denote by $\<\,\cdot\,,\,\cdot\,\>$ the usual scalar product in $\ms
L^2([0,1])$: 
\begin{equation*}
\<\, f \,,\, g \,\> \;=\; \int_0^1 f(x) \, g(x) \; dx\;, \quad
f\,,\, g\,\in\, \ms L^2([0,1])\;.
\end{equation*}
Fix a function $\gamma\colon [0,1] \to [0,1]$, which corresponds to
the initial profile.  Denote by
$\color{bblue} D_{\mc E} ([0,T], \ms M_{\rm ac})$ the set of
trajectories in $D ([0,T], \ms M_{\rm ac})$ with finite energy, and by
$\color{bblue} D_{\gamma, \mc E} ([0,T], \ms M_{\rm ac})$ the set of
trajectories with finite energy and which start from $\gamma$, $u_0
(\cdot) = \gamma(\cdot)$ a.s.

Recall, from \eqref{4-05}, the definition of
$\mf b_{\varrho, D} (a,M)$ and, from Remark \ref{rm2}, the convention
on notation.  For each $H$ in $C^{1,2}([0,T]\times[0,1])$, let
$J_{T,H} \colon D_{\mc E}([0,T], \ms M_{\rm ac})\longrightarrow \bb R$
be the functional given by
\begin{equation}
\label{1-01}
\begin{aligned}
& J_{T,H} ( u ) \; =\; \big\langle u_T, H_T \big\rangle 
\;-\; \langle u_0 , H_0  \rangle
\;-\; \int_0^{T} \big\langle u_t, \partial_t H_t  \big\rangle \; dt
\\
&\quad \; -\; \int_0^{T}  \big\langle u_t , \Delta H_t \big\rangle \; dt
\;+\;  \int_0^{T} u_t(1)\,  \nabla H_t (1) \; dt 
\; -\;   \int_0^{T} u_t(0)\,  \nabla H_t(0) \; dt  \\
&\quad 
\;-\; \int_0^{T}  \big\langle \sigma( u_t ), 
\big( \nabla H_t \big)^2 \big\rangle \; dt \\
&\quad  \;-\; \int_0^T \Big\{\, 
\mf b_{\alpha, A} \big(\, u_t(0)\, ,\, H_t (0)\, \big)
\;+\; \mf b_{\beta, B} \big(\, u_t(1)\, ,\, H_t(1)\, \big)\,
\Big\}\, dt \; .
\end{aligned}
\end{equation}
The right-hand side is well defined because the functions
$u(\cdot, 0)$, $u(\cdot, 1)$ belong to $\ms L^2([0,T])$.

Since trajectories in $D_{\mc E}([0,T], \ms M_{\rm
ac})$ have generalized space-derivatives, we may integrate by parts
the second line and write the functional $J_{T,H} ( \, \cdot\,)$
as
\begin{equation}
\label{1-01b}
\begin{aligned}
J_{T,H} ( u ) \; & =\; \big\langle u_T, H_T \big\rangle 
\;-\; \langle u_0 , H_0  \rangle
\;-\; \int_0^{T} \big\langle u_t, \partial_t H_t  \big\rangle \; dt
\\
& +\; \int_0^{T} \big\langle \nabla u_t , \nabla H_t \big\rangle \; dt
-\; \int_0^{T}  \big\langle \sigma( u_t ), 
\big( \nabla H_t \big)^2 \big\rangle \; dt \\
&-\; \int_0^T \Big\{\, 
\mf b_{\alpha, A} \big(\, u_t(0)\, ,\, H_t (0)\, \big)
\;+\; \mf b_{\beta, B} \big(\, u_t(1)\, ,\, H_t(1)\, \big)\,
\Big\}\, dt \; .
\end{aligned}
\end{equation}

Let
$I_{[0,T]} \colon D_{\mc E} ([0,T],\ms M_{\rm ac}) \rightarrow
[0,+\infty]$ be the functional defined by
\begin{equation*}
I_{[0,T]} (\pi) \; :=\;
\sup_{H\in   C^{1,2} ([0,T]\times[0,1])} J_{T,H}(\pi)\; .
\end{equation*}
Fix a density profile $\gamma$ in $\ms M_{\rm ac}$, and let
$I_{[0,T]} ( \, \cdot\, | \gamma) \colon D ([0,T], \ms M) \to \bb R$
be given by
\begin{equation}
\label{1-02}
I_{[0,T]}  ( \pi | \gamma) \;=\; 
\left\{
\begin{aligned}
& I_{[0,T]} ( \pi ) \quad 
\text{if}\;\; \pi \, \in\, D_{\gamma, \mc E}([0,T], \ms M_{\rm
ac})\;, \\
& \infty\quad  \text{otherwise}\;.
\end{aligned}
\right.
\end{equation}
We review in Section \ref{sec01} some properties of the functional
$I_{[0,T]} (\cdot | \gamma)$ obtained in \cite{FGLN2021}. Next result
is the main theorem in \cite{FGLN2021}.

\begin{theorem}
\label{s02}
Fix $T>0$ and a measure $\pi(dx) = \gamma(x)\, dx$ in
$\ms M_{\rm ac}$.  Consider a sequence $\eta^N$ of configurations
associated to $\gamma$. Then the measure $\bb Q_{\eta^N}$ satisfies a
large deviation principle with speed $N$ and rate function
$I_{[0,T]}(\cdot|\gamma)$. Namely, for each closed set
$\mc C \subset D([0,T], \ms M)$ and each open set
$\mc O \subset D([0,T], \ms M)$,
\begin{equation*}
\begin{aligned}
& \limsup_{N\to\infty} \frac 1N \log \bb P_{\eta^N} [ \bs \pi \in \mc C]
\;\le\; - \inf_{\pi \in \mc C} I_{[0,T]} (\pi | \rho)  
\\
&\quad \liminf_{N\to\infty} \frac 1N \log \bb P_{\eta^N} [ \bs \pi \in
\mc O] \;\ge\; - \inf_{\pi \in \mc O} I_{[0,T]} (\pi | \rho) \; .
\end{aligned}
\end{equation*}
\end{theorem}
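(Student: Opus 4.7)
My plan is to follow the classical Donsker-Varadhan / Kipnis-Landim scheme for the dynamical large deviations of boundary-driven interacting particle systems, adapted to the weak-coupling regime so that the boundary generators $L^{\rm lb}_N$, $L^{\rm rb}_N$ feed into the rate functional rather than merely fixing the boundary densities.

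\textbf{Upper bound.} I would first establish exponential tightness in $D([0,T], \ms M)$ by the standard combination of moment bounds on space and time oscillations of $\bs\pi$ and a super-exponential bound on the energy $\mc Q_{[0,T]}$. Next, for each $H \in C^{1,2}([0,T]\times[0,1])$, I would use the exponential martingale
\[
M^{N,H}_t \;=\; \exp\Bigl\{\, N\langle \bs\pi(t), H_t\rangle \,-\, N\langle \bs\pi(0), H_0\rangle \,-\, \int_0^t e^{-N\langle \bs\pi(s), H_s\rangle}(\partial_s + \ms L_N)\, e^{N\langle \bs\pi(s), H_s\rangle}\, ds\,\Bigr\}.
\]
The decisive calculation is the $1/N$-expansion of $e^{-N\langle \eta, H\rangle}\ms L_N e^{N\langle \eta, H\rangle}$: the diffusively scaled bulk generator contributes, after summation by parts, the terms $-\langle u_t, \Delta H_t\rangle + u_t(1)\nabla H_t(1) - u_t(0)\nabla H_t(0) - \langle \sigma(u_t), (\nabla H_t)^2\rangle$ of \eqref{1-01}; whereas the boundary generators, scaled by $N$ rather than $N^2$, produce exactly the Legendre-type boundary contributions $\mf b_{\alpha,A}(u_t(0),H_t(0))$ and $\mf b_{\beta,B}(u_t(1),H_t(1))$. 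After replacing local empirical averages by hydrodynamic densities via one- and two-block super-exponential estimates (requiring care at the boundary sites, where the mean holding time is $O(N^{-1})$), one obtains $\limsup_N N^{-1}\log \bb P_{\eta^N}[\bs\pi \in \mc C] \le -\inf_{\pi \in \mc C} J_{T,H}(\pi)$ for every closed set $\mc C$. A minimax argument, valid by exponential tightness, then upgrades this to the upper bound with rate $I_{[0,T]}(\cdot|\gamma)$.

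\textbf{Lower bound.} The strategy is to identify a subclass $\ms S \subset D_{\gamma, \mc E}([0,T], \ms M_{\rm ac})$ of smooth trajectories bounded strictly away from $0$ and $1$, and, for each $u \in \ms S$, to produce a smooth tilt $H$ such that $u$ is the hydrodynamic limit of the process weighted by $M^{N,H}_T$. The required $H$ is read off the Hamilton equation associated to $\ms H(\gamma,F)$: in the bulk, $\partial_t u_t = \Delta u_t - 2 \nabla(\sigma(u_t)\nabla H_t)$, completed by Robin-type boundary conditions obtained by differentiating $\mf b_{\alpha,A}$, $\mf b_{\beta,B}$ in their momentum argument. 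A Girsanov-type computation combined with a hydrodynamic limit for the tilted dynamics and Jensen's inequality gives, for every open neighborhood $\mc O$ of $u$, the bound $\liminf_N N^{-1}\log \bb P_{\eta^N}[\bs\pi \in \mc O] \ge -J_{T,H}(u) = -I_{[0,T]}(u|\gamma)$.

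\textbf{Main obstacle.} The hard part is the $I$-density step: showing that every $u$ with $I_{[0,T]}(u|\gamma) < \infty$ is a limit in $D([0,T], \ms M)$ of a sequence $u^n \in \ms S$ along which $I_{[0,T]}(u^n|\gamma) \to I_{[0,T]}(u|\gamma)$. This is genuinely more delicate than in the Dirichlet setting of \cite{BDGJL2003}, because the boundary traces $u_t(0)$ and $u_t(1)$ are now free parameters that enter the rate function non-linearly through $\mf b_{\alpha,A}$ and $\mf b_{\beta,B}$; the regularization must therefore simultaneously smooth $u$, keep it strictly inside $(0,1)$, preserve the initial condition $\gamma$, and have boundary traces along which $\mf b_{\alpha,A}$ and $\mf b_{\beta,B}$ converge. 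I would proceed in stages: first interpolate $u$ with the stationary profile $\bar\rho$ on a short initial time layer and take an $\varepsilon$-convex combination with $\bar\rho$ to bound it away from $\{0,1\}$; then mollify in space with a kernel compatible with the Neumann structure at the endpoints (e.g.\ a reflected heat kernel) and in time with a causal mollifier; finally verify, using the convexity and lower semicontinuity properties of $J_{T,H}$ recalled from \cite{FGLN2021}, that each step perturbs $I_{[0,T]}$ by an amount tending to zero. Combined with an approximation of $\eta^N$ consistent with $\gamma$, this closes the lower bound and completes the proof.
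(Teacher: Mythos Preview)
The paper does not prove Theorem~\ref{s02}: it is stated explicitly as ``the main theorem in \cite{FGLN2021}'' and is imported without argument, to be used as input for the quasi-potential analysis carried out in Sections~\ref{sec02}--\ref{sec03}. There is therefore no in-paper proof for your sketch to be compared against.

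That said, your outline is a faithful summary of the standard scheme and matches the shape of the auxiliary results the paper quotes from \cite{FGLN2021}: the $I$-density statement you identify as the crux is exactly Theorem~\ref{mt3b} here, and the representation of $I_{[0,T]}$ through the optimal field $H$ solving \eqref{5-01} with cost \eqref{5-03} is Lemma~\ref{l09}. One small correction to your regularization step: the paper (via Definition~\ref{d04b}) has the approximating path follow the hydrodynamic equation \eqref{1-06} on a short initial time layer, which both smooths the trajectory and preserves the prescribed initial datum $\gamma$; your proposal to ``interpolate $u$ with the stationary profile $\bar\rho$'' on that layer would start the path at $\bar\rho$ rather than at $\gamma$, violating the constraint $u(0)=\gamma$ built into $I_{[0,T]}(\cdot\,|\,\gamma)$. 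Replacing that interpolation by the hydrodynamic flow from $\gamma$ fixes the issue and aligns your sketch with the approach actually taken in \cite{FGLN2021}.
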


\smallskip\noindent{\bf The quasi-potential.}  Denote by
$V:\ms M_{\rm ac} \to \bb R_+$ the quasi-potential associated to the
rate function $I_{[0,T]}(\,\cdot\,|\, \gamma)$. It is given by
\begin{equation}
\label{1-04} 
V(\gamma) \;:=\; \inf_{T>0}\; \inf_{u(\cdot)} \;
I_{[0,T]}(u\,|\, \bar\rho) \; ,
\end{equation}
where the infimum is carried over all paths $u$ in
$D([0,T], \ms M_{\rm ac})$ such that $u(0) = \bar\rho$, $u(T)=\gamma$.
The quasi-potential $V(\gamma)$ measures the minimal cost to produce a
profile $\gamma$ starting from $\bar\rho$. 

Denote by $\color{bblue} C^1\big([0,1]\big)$ the space of once
continuously differentiable functions $F:[0,1]\to \bb R$ endowed with
the norm
$\color{bblue} \|F\|_{C^1} := \sup_{x\in[0,1]} \big\{ |F(x)| + |\nabla
F(x)| \big\}$.  Let $\mc {F}$ be the space of monotone $C^1$
functions:
\begin{equation}
\label{2-01}
\mc {F} :=\big\{\, F \in  C^1\big([0,1]\big) \,:\:
\alpha \,<\, F(x)\, < \, \beta \;,\;
\nabla F (x) > 0 \;\; \forall\;  x\in [0,1] \, \big\} \; .
\end{equation}
Denote by $\ms G_{\rm bulk}$,
$\ms G \colon \ms M_{\rm ac} \times \mc F \to \bb R$ the
functionals given by
\begin{equation*}
\begin{gathered}
\ms G_{\rm bulk} (\gamma , F) :=  
\int_{0}^1  \Big\{\, \gamma (x) \, \log
\frac {\gamma (x)}{F(x)} + \big[1 - \gamma (x)\big]
\log \frac {1- \gamma (x)}{1- F(x)}
+ \log \frac {\nabla F(x)}{[\beta - \alpha]}\,
\Big\} \; dx\;, \\
\ms G (\gamma, F) \;:=\; \ms G_{\rm bulk} (\gamma, F)  \;+\; A\,
\ln \frac{F(0) - \alpha}{A(\beta-\alpha)}
\;+\; B\, \ln \frac{\beta - F(1)}{B (\beta-\alpha)} \;\cdot
\end{gathered}
\end{equation*}
Define $S_0$, $S \colon \ms M_{\rm ac} \to \bb R$, by
\begin{equation}
\label{2-03}
S_0 (\gamma) \;:=\; \sup_{F\in\mc F} \ms G (\gamma, F)  \;, \quad
S (\gamma) \;:=\; S_0 (\gamma)  \,-\, S_0 (\bar\rho)  \;.
\end{equation}

\smallskip\noindent{\bf Main results.}  The first main assertion of
the article, Theorem \ref{t01}, affirms that, for each
$\gamma\in \ms M_{\rm ac}$, the non-linear boundary-value problem
\eqref{3-01} has a unique solution in $\mc F$.  Its precise statement
is left to Section \ref{sec02} because it requires some notation.

\begin{theorem}
\label{t02}
The functional $S\colon \ms M_{\rm ac} \to \bb R$ defined in
\eqref{2-03} is bounded, convex and lower semi-continuous.  Moreover,
$S_0(\gamma) = \ms G (\, \gamma, F(\gamma) \,)$, where $F(\gamma)$ is the
solution to \eqref{3-01}.
\end{theorem}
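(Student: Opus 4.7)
The plan is to dispatch the four assertions of the theorem in order: convexity and lower semi-continuity will follow from the variational form of $S_0$ and the convexity of the Bernoulli relative entropy; boundedness from a test-function lower bound (using $\bar\rho$) and explicit upper estimates on each summand of $\ms G$; and the identification $S_0(\gamma) = \ms G(\gamma, F(\gamma))$ from computing the Euler--Lagrange equation of $\ms G(\gamma,\cdot)$, matching it against \eqref{3-01}, invoking Theorem \ref{t01} for uniqueness, and promoting the critical point to a maximizer via the direct method.

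Fix $F \in \mc F$. The only $\gamma$-dependent piece of $\ms G(\gamma, F)$ is the integrated Bernoulli relative entropy $\int_0^1 \{\gamma \log(\gamma/F) + (1-\gamma)\log((1-\gamma)/(1-F))\}\, dx$, whose integrand is strictly convex in $\gamma \in [0,1]$. By the Fenchel representation of convex integrals as suprema of weakly continuous linear functionals of the density, this integral is convex and weakly lower semi-continuous on $\ms M_{\rm ac}$, and the supremum over $F$ preserves both properties, so $S$ is convex and l.s.c. For boundedness, when $\alpha < \beta$ the explicit formula for $\bar\rho$ places it in $\mc F$, and testing the supremum against $F = \bar\rho$ gives $S_0(\gamma) \geq \ms G(\gamma, \bar\rho) \geq \ms G(\bar\rho, \bar\rho) = S_0(\bar\rho)$ by non-negativity of relative entropy, so $S \geq 0$. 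The upper bound is uniform in $\gamma$ and $F \in \mc F$: the entropy integrand is bounded pointwise by $-\log \min(\alpha, 1-\beta)$ since $\alpha < F < \beta$; Jensen yields $\int_0^1 \log(\nabla F/(\beta-\alpha))\, dx \leq \log((F(1) - F(0))/(\beta - \alpha)) \leq 0$; and the boundary log terms are dominated by $-A \log A$ and $-B \log B$, respectively, since $F(0) - \alpha$ and $\beta - F(1)$ lie in $(0, \beta - \alpha)$.

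For the identification, one differentiates $\ms G(\gamma, \cdot)$ at $F \in \mc F$ in a smooth direction $h$ (with $F + \varepsilon h \in \mc F$), under the integral sign, and integrates $\int_0^1 \nabla h/\nabla F\, dx$ by parts. The first variation reads
\begin{equation*}
\int_0^1 h \left\{ \frac{F - \gamma}{F(1-F)} + \frac{\Delta F}{(\nabla F)^2} \right\} dx + h(0) \left\{ \frac{A}{F(0) - \alpha} - \frac{1}{\nabla F(0)} \right\} + h(1) \left\{ \frac{1}{\nabla F(1)} - \frac{B}{\beta - F(1)} \right\} .
\end{equation*}
Setting it to zero for arbitrary $h$ --- separately in the interior and at each endpoint --- yields precisely the boundary-value problem \eqref{3-01}. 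By Theorem \ref{t01}, its unique solution $F(\gamma) \in \mc F$ is therefore the unique critical point of $\ms G(\gamma, \cdot)$ on $\mc F$. To obtain $S_0(\gamma) = \ms G(\gamma, F(\gamma))$ I would then apply the direct method: any maximizing sequence $(F_n) \subset \mc F$ is kept in a compact subset of $\mc F$ in the $C^1$-topology by the upper bound already established together with the divergence of $\int \log \nabla F_n$ under concentration of $\nabla F_n$ and the divergence of the boundary logs under degeneration of $F_n(0) - \alpha$ or $\beta - F_n(1)$. A subsequential limit $F^\star \in \mc F$ attains $S_0(\gamma)$ and, being a critical point, must equal $F(\gamma)$.

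The main obstacle will be this last step. The functional $\ms G(\gamma, \cdot)$ is neither concave nor convex in $F$: the entropy part is $F$-convex while the gradient and boundary log terms are $F$-concave, and the second variation at $F(\gamma)$ has indefinite sign, so maximality cannot be read off concavity. Extracting enough coercivity from the functional to rule out the various $\partial \mc F$-degenerations (interior pinching of $F$ toward $\alpha$ or $\beta$, local flattening $\nabla F \to 0$, or collapse of the boundary gaps $F(0) - \alpha$ and $\beta - F(1)$) while keeping the analysis in a topology compatible with $\mc F$ is the delicate part of the argument.
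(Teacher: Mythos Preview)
Your treatment of convexity, lower semi-continuity, and boundedness is essentially the same as the paper's, and your computation of the first variation of $\ms G(\gamma,\cdot)$ correctly recovers the Euler--Lagrange equation \eqref{3-01}. The gap lies exactly where you flagged it: promoting the unique critical point $F(\gamma)$ to the global maximizer of $\ms G(\gamma,\cdot)$ over $\mc F$.

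You propose the direct method, but the coercivity/compactness argument you sketch is not carried out and is genuinely hard in the $C^1$ topology: the barriers you identify (the $\log\nabla F$ term blowing down when $\nabla F$ degenerates, the boundary logs blowing down when $F(0)\to\alpha$ or $F(1)\to\beta$) give only integral or endpoint control, not the pointwise two-sided bounds on $\nabla F$ and the equicontinuity of $\nabla F$ that $C^1$-precompactness would require. Without these, a maximizing sequence could in principle converge only weakly to something outside $\mc F$, and the argument stalls.

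The paper circumvents this entirely by the change of variable $\varphi = \Phi(F) = \log[F/(1-F)]$, which maps $\mc F$ to the set $\widetilde{\mc F}$ of increasing $C^1$ functions with values in $(\varphi_-,\varphi_+)$. In the new variable the functional $\widetilde{\ms G}(\gamma,\varphi) = \ms G(\gamma,\Phi^{-1}(\varphi))$ becomes \emph{strictly concave}: the bulk integrand reads $\mf h(\gamma) + (1-\gamma)\varphi - \log(1+e^\varphi) + \log(\varphi'/(\beta-\alpha))$, and each nonlinear piece ($-\log(1+e^\varphi)$, $\log\varphi'$, and the two boundary terms rewritten through $e^\varphi/(1+e^\varphi)$) is strictly concave in $\varphi$ or $\varphi'$. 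Strict concavity plus Gateaux differentiability then gives immediately that the unique critical point $\varphi = \Phi(F(\gamma))$ is the unique maximizer --- no compactness or maximizing-sequence argument is needed. This is the key idea you are missing; once you have it, the identification $S_0(\gamma)=\ms G(\gamma,F(\gamma))$ is a one-line consequence.
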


\begin{remark}
\label{rm08}
When $\gamma = \bar\rho$, $F=\bar\rho$ is the solution to
\eqref{3-01}. Replacing $F$ by $\bar\rho$ in the formula for $\ms G$
yields that $S_0(\bar\rho) \,=\, -\, (1+A+B) \, \log (1+A+B)$.
\end{remark}

Next theorem asserts that the functionals defined through the
variational problems \eqref{1-04} and \eqref{2-03} coincide. In
particular, it gives an ``explicit'' formula for the dynamical
variational problem \eqref{1-04} defining the quasi-potential.

\begin{theorem}
\label{qp=s}
For each $\gamma \in \ms M_{\rm ac}$, $V(\gamma)=S(\gamma)$. In
particular, the functional $S$ is non-negative. That is, the
functional $S_0$ attains its minimum at $\bar\rho$.
\end{theorem}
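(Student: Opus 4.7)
My plan is to prove the two inequalities $V(\gamma) \geq S(\gamma)$ and $V(\gamma) \leq S(\gamma)$ separately, following the Hamilton--Jacobi strategy sketched in the introduction and carried out in \cite{BDGJL, BDGJL2003}.

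For the lower bound $V \geq S$, the core step is to show that $S$ is a solution of the Hamilton--Jacobi equation $\ms H(\gamma, H^*(\gamma)) = 0$, where $H^*(\gamma) := \delta S / \delta \gamma$. Since Theorem \ref{t02} gives $S(\gamma) = \ms G(\gamma, F(\gamma)) - S_0(\bar\rho)$ with $F(\gamma)$ a critical point of $F \mapsto \ms G(\gamma, F)$, the envelope theorem yields
\begin{equation*}
H^*(\gamma)(x) \;=\; \log\frac{\gamma(x)\,[1-F(x)]}{[1-\gamma(x)]\,F(x)}\;\cdot
\end{equation*}
Substituting this expression into \eqref{1-09} and using the boundary-value problem \eqref{3-01} to cancel the bulk and boundary contributions should give $\ms H(\gamma, H^*(\gamma)) = 0$. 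Once this identity is established, for any path $u \in D_{\bar\rho, \mc E}([0,T], \ms M_{\rm ac})$ with $u_T = \gamma$, the variational definition of $I_{[0,T]}$ together with the chain rule $\partial_t S(u_t) = \langle \partial_t u_t, H^*(u_t) \rangle$ gives
\begin{equation*}
I_{[0,T]}(u \,|\, \bar\rho) \;\geq\; \int_0^T \big\{ \langle \partial_t u_t, H^*(u_t) \rangle - \ms H(u_t, H^*(u_t)) \big\}\, dt \;=\; S(\gamma) \;.
\end{equation*}
Taking the infimum over $T>0$ and $u$ yields $V(\gamma) \geq S(\gamma)$.

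For the upper bound $V \leq S$, the plan is to exhibit, for each $\varepsilon > 0$, a horizon $T_\varepsilon$ and a path $u^\varepsilon$ from $\bar\rho$ to (an approximation of) $\gamma$ whose cost is within $\varepsilon$ of $S(\gamma)$. Following the heuristic in the introduction, the candidate trajectory is the time reversal of the adjoint hydrodynamic evolution. Concretely, I would let $F_t$ solve the hydrodynamic equation \eqref{1-06} with initial condition $F_0 = F(\gamma)$, so that $F_t \to \bar\rho$ as $t \to \infty$. Using the coupled Hamilton system \eqref{4-11c}--\eqref{4-08c} (where the remarkable autonomous evolution of $F_t$ is the crucial decoupling), one recovers the companion density $u_t$ algebraically from $F_t$ via the relation defining $F(\gamma)$. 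Setting $\tilde u_t := u_{T-t}$ gives a path from $\bar\rho$ to $u_0 = \gamma$; a direct computation using $\ms H(u_t, H^*(u_t)) = 0$ and integration by parts in time should give
\begin{equation*}
I_{[0,T]}(\tilde u \,|\, \bar\rho) \;=\; S(\gamma) \,-\, S(u_T)\;,
\end{equation*}
which converges to $S(\gamma)$ as $T \to \infty$, since $u_T \to \bar\rho$ and $S(\bar\rho) = 0$.

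The main obstacle I expect is the construction and control of this quasi-optimal trajectory: showing that the coupled system can be solved, that the resulting $\tilde u$ has finite energy and reaches $\gamma$, and that its cost can be computed exactly rather than just bounded. The autonomous character of $F_t$ noted in the introduction is precisely what unlocks this analysis, but careful approximation arguments will be needed to handle initial profiles $\gamma \in \ms M_{\rm ac}$ lacking regularity or touching $0$ or $1$, and to propagate the identity $\ms H(\gamma, H^*(\gamma)) = 0$ from smooth $\gamma$ to the general case. Lower semicontinuity and convexity of $S$ from Theorem \ref{t02} will be essential for the density and approximation steps.
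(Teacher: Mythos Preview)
Your strategy matches the paper's almost exactly: the lower bound via the test function $H^*(\gamma)=\log\frac{\gamma(1-F)}{(1-\gamma)F}$ together with the Hamilton--Jacobi identity $\ms H(\gamma,H^*(\gamma))=0$ is precisely Lemmata \ref{l05}--\ref{l07}, and the upper bound via the time-reversed adjoint flow with $F_t$ evolving autonomously by \eqref{1-06} is precisely Proposition \ref{l10} and Lemmata \ref{l08}, \ref{l12}, \ref{l14}.

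There is, however, one concrete missing ingredient in your upper-bound sketch. When you write ``setting $\tilde u_t:=u_{T-t}$ gives a path from $\bar\rho$ to $u_0=\gamma$'', this is not true for any finite $T$: the reversed adjoint trajectory starts at $u_T$, which is only \emph{close} to $\bar\rho$, never equal to it. Since $I_{[0,T]}(\cdot\,|\,\bar\rho)$ is $+\infty$ unless the initial value is exactly $\bar\rho$, the identity $I_{[0,T]}(\tilde u\,|\,\bar\rho)=S(\gamma)-S(u_T)$ cannot hold as stated; what you actually get is $I_{[0,T]}(\tilde u\,|\,u_T)=S_0(\gamma)-S_0(u_T)$. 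The paper closes this gap with a separate construction (Lemma \ref{l13}): an explicit short path from $\bar\rho$ to $u_T$ built from the Robin-Laplacian eigenbasis, whose cost is controlled by $\|u_T-\bar\rho\|_2^2$ and hence vanishes as $T\to\infty$. The full near-optimal path is then the concatenation of this connector with $\tilde u$, and one uses the subadditivity \eqref{4-02b}. This connector is not an approximation-of-$\gamma$ issue and is not covered by your final paragraph; without it the upper bound does not close.

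A second point you should make explicit: for the computation of the cost of the reversed trajectory via Lemma \ref{l08} one needs $\tilde u\in D_{T,\delta}$, i.e.\ $C^{1,2}$ and bounded away from $0$ and $1$. The paper secures this by first approximating $\gamma$ by smooth $\gamma_n\in\ms M_{\delta_n}$ (so that Lemmata \ref{l10b} and \ref{l11} apply to $v^{(\gamma_n)}$), proving the bound for $\gamma_n$, and then passing to the limit using lower semicontinuity of $I_{[0,T]}$ and Remark \ref{rm01} for $S_0$. Your closing remarks anticipate this, but note that lower semicontinuity of $S$ alone is not what is used here; one needs the \emph{continuity} of $S_0(\gamma_n)\to S_0(\gamma)$ along the specific approximating sequence (Remark \ref{rm01}).
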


\begin{remark}
Theorems \ref{t02} and \ref{qp=s} formalize the arguments presented in
\cite{DHS2021}, where Derrida, Hirschberg and Sadhu derived the steady
state large deviations functional by representing the steady state as
a product of matrices.
\end{remark}

\begin{remark}
\label{rm4}
If $\alpha = \beta$, the exclusion dynamics is reversible and the
stationary state is the Bernoulli product measure with density
$\alpha$. In particular, in this case
\begin{equation*}
S(\gamma) \;=\; \int_{0}^1  \Big\{\, \gamma (x) \, \log
\frac {\gamma (x)}{\alpha} + \big[1 - \gamma (x)\big]
\log \frac {1- \gamma (x)}{1- \alpha}\, \Big\} \; dx\;.
\end{equation*}
The same strategy as in the proof of Theorem \ref{qp=s} yields that
$V=S$. The arguments are much simpler because the adjoint dynamics
coincides with the original one as the process is reversible.
\end{remark}

The method of the proof of Theorems \ref{t02} and \ref{qp=s} is the
one proposed in \cite{BDGJL} and carried out in \cite{BDGJL2003} for
exclusion processes with strong interaction with the reservoirs.

The fact that the densities are not fixed by the dynamics at the
boundary and the presence of exponential terms at the boundary in the
dynamical large deviations functionals (cf. the definition of
$\mf b_{\varrho, D}$), introduce many new difficulties. The proof of
the uniqueness of solutions to \eqref{3-01} is one of them (cf. Proof
of Theorem \ref{t01}).

The proof of the upper bound for the quasi-potential is a second
example. As the boundary conditions are not fixed, to prove that the
solutions to the adjoint hydrodynamic equations are bounded away from
$0$ and $1$, we need to investigate the behavior of the solutions at
the boundary. This is done in the proof of Proposition \ref{l10}.

The article is organized as follows. In Section \ref{sec06} we present
a heuristic derivation of Theorem \ref{qp=s} based on the Hamiltonian
formalism of rational mechanics. This argument explains the strategy
adopted in the following sections.  In Section \ref{sec01}, we recall
some properties of the dynamical rate function
$I_{[0,T]} ( \,\cdot\, | \gamma)$ obtained in
\cite{FGLN2021}. Theorems \ref{t02}, \ref{qp=s} are proved in Sections
\ref{sec02}, \ref{sec03}, respectively. In Appendices \ref{sec04} and
\ref{sec05} we present some results on the Robin Laplacian and on
solutions to heat equations with mixed boundary conditions needed in
the proofs of the main theorems.


\section{Sketch of the proof of Theorem \ref{qp=s}.}
\label{sec06}

The arguments below are formal, but explain the idea of the proof.  We
follow the strategy proposed in \cite{BDGJL2003}, in the context of
boundary driven symmetric simple exclusion processes with strong
interaction with the boundaries, to derive a formula for the
quasi-potential based on the Hamiltonian formalism.  We also introduce
the hydrodynamic equation of the adjoint process, which describes how the
dynamics acts to create an anomalous density profile.  This section
also serves as a road map to prove Theorem \ref{qp=s} in other
contexts.

Recall from \eqref{1-09} the definition of the Hamiltonian $\ms H$.
With this notation and an integration by parts in time, the functional
$I_{[0,T]}$ can be written as
\begin{equation}
\label{7-01}
I_{[0,T]} (\pi | \gamma) \; =\;
\sup_{H}
\int_0^T \big\{\,
\big\langle \partial_t u_t,  H_t  \big\rangle
\,-\, \ms H(u_t, H_t) \, \big\} \; dt \; .
\end{equation}
Hence, the functional $I_{[0,T]} (\,\cdot\,|\gamma)$ corresponds to
the action functional associated to the Hamiltonian $\ms H$.

A variational calculation yields that the quasi-potential satisfies
the Hamilton-Jacobi equation: for every $\gamma\in \ms M_{\rm ac}$,
\begin{equation}
\label{7-04}
\ms H \big (\,  \gamma \,,\, \frac{\delta V}{\delta \gamma}\, \big)  \;=\; 0\;.
\end{equation}
where $\delta V/\delta \gamma$ stands for the functional derivative of $V$.

Fix $\gamma\in \ms M_{\rm ac}$ and let
$\color{bblue} \Gamma = \log [\gamma/(1-\gamma)] - \log[F/(1-F)]$ for
some function $F$ taking values in the interval $(0,1)$. Lemma
\ref{l06} asserts that, if $\gamma$ is smooth and bounded away from
$0$ and $1$, $\Gamma$ solves the Hamilton-Jacobi equation
\begin{equation}
\label{7-02}
\ms H(\gamma, \Gamma) \;=\; 0
\end{equation}
if $F$ is the solution to \eqref{3-01}, that is, if $F=F(\gamma)$ with
the notation introduced in the statement of Theorem
\ref{t02}. By \eqref{7-04} and \eqref{7-02},
\begin{equation}
\label{7-03}
\frac{\delta V}{\delta \gamma} \;=\;
\log \frac{\gamma}{1-\gamma} \;-\; \log \frac{F}{1-F} \;\cdot
\end{equation}

To build a functional $S$ which satisfies \eqref{7-03}, we look
for a functional $\ms W(\gamma, F)$, with two properties:
\begin{itemize}
\item[(a)] For every $\gamma \in \ms M_{\rm ac}$,
\begin{equation*}
\frac{\delta \ms W}{\delta \gamma} (\gamma, F) \;=\;
\log \frac{\gamma}{1-\gamma} \,-\,
\log \frac{F}{1-F}\;,
\end{equation*}
\item[(b)] For each $\gamma\in \ms M_{\rm ac}$, the solution
$F(\gamma)$ of equation \eqref{3-01} is a critical point of
$\ms W(\gamma, \cdot)$;
\end{itemize}
Under these assumptions, defining $S(\gamma)$ as
$\ms W(\gamma, F(\gamma))$, we have
\begin{equation*}
\frac{\delta S}{\delta \gamma} (\gamma) \;=\;
\frac{\delta \ms W}{\delta \gamma} (\gamma, F(\gamma)) \;+\;
\frac{\delta \ms W}{\delta F} (\gamma, F (\gamma)) \,
\frac{\delta F}{\delta \gamma} (\gamma) \;=\;
\frac{\delta \ms W}{\delta \gamma} (\gamma, F(\gamma)) \;\cdot
\end{equation*}
The last identity follows from property (b) of the functional $\ms W$
[$\delta \ms W/\delta F = 0$ at $(\gamma, F (\gamma))$]. By property
(a), the right-hand side is equal to
$\log [\gamma/(1-\gamma)] - \log[F(\gamma)/(1-F(\gamma))]$, proving
that \eqref{7-03} is fulfilled.

This computation explains the introduction of the functional
$\ms G(\gamma, F)$, defined below \eqref{2-03}. It is obtained by
integrating \eqref{7-03} in $\gamma$ and adding terms which depend
only on $F$ to match condition (b). The functional $\ms G$ satisfies
properties (a) and (b), as it is easy to show that \eqref{3-01}
corresponds to the Euler-Lagrange equation of the functional
$\ms G(\gamma, \,\cdot\,)$.

We turn to the proof that $V=S$. Fix $\gamma \in \ms M_{\rm ac}$,
$T>0$ and a trajectory $u_t$, $0\le t\le T$, such that
$u_0= \bar\rho$, $u_T=\gamma$. Let $F_t$ be the solution to
\eqref{3-01} with $u_t$ replacing $\gamma$, $F_t = F(u_t)$. By
\eqref{7-01}, \eqref{7-02},
\begin{equation*}
I_{[0,T]} (\pi | \gamma) \; \ge \;
\int_0^T \big\langle \partial_t u_t,  \Gamma_t  \big\rangle
\; dt \; ,
\end{equation*}
where $\Gamma_t = \log [u_t/(1-u_t)] - \log[F_t/(1-F_t)]$.  In view of
\eqref{7-03}, replacing $\Gamma_t$ by $(\delta S/\delta \gamma)(u_t)$
yields that
\begin{equation*}
I_{[0,T]} (\pi | \gamma) \; \ge \; S(u_T) \,-\, S(u_0)\;=\;
S(\gamma) \,-\, S(\bar\rho) \; ,
\end{equation*}
so that
\begin{equation*}
V(\gamma) \;\ge \;
S(\gamma) \,-\, S(\bar\rho) \; .
\end{equation*}

We proceed with the upper bound. By \cite{BDGJL}, the optimal
trajectory for the variational problem \eqref{1-04} is the
hydrodynamic trajectory of the adjoint dynamics reversed in
time. Moreover, according to \cite{BDGJL}, the adjoint dynamics is
given by
\begin{equation*}
\partial_t v \,=\,  -\, \Delta v  \,+\, 2\,  \nabla \big( \, \sigma (v) 
\, \nabla  \frac{\delta S}{\delta v} \, \big)
\end{equation*}
In view of \eqref{7-03}, replacing $\delta S/\delta v$ by
$\log [v_t/(1-v_t)] - \log [F_t/(1-F_t)]$, where $F_t$ is the solution
to \eqref{3-01} with $v_t$ in place of $\gamma$ yields the equation
\begin{equation*}
\partial_t v \,=\,  \Delta v  \,-\, 2\,  \nabla \big( \, \sigma (v) 
\, \nabla  \log \frac{F}{1-F} \, \big)\;.
\end{equation*}
Adding the boundary and initial conditions, as well as the equation
for $F$, the previous equation becomes the system of equations
\begin{equation}
\label{4-11c}
\left\{
\begin{aligned}
& \partial_t v_t \,=\,  \Delta v_t  \,-\, 2\,  \nabla \big( \, \sigma (v_t) 
\, \nabla R_t \, \big)   \quad (t,x) \in
(0,\infty)\times (0,1) \;, \\
& \nabla v_t (1) \,-\, 2\, \sigma(v_t(1)) \, \nabla R_t(1) \,=\,
\mf p_{1-\beta, B} \big(\, v_t(1)\,,\, R_t(1)\, \big) \;, \\
& \nabla v_t (0) \,-\, 2\, \sigma(v_t(0)) \, \nabla R_t(0) \,=\,
-\, \mf p_{1-\alpha, A} \big(\, v_t(0)\,,\, R_t(0)\, \big) \;, \\
& v_0(\cdot) \,=\, \gamma (\cdot) \;, \quad  x\in [0,1]\;,
\end{aligned}
\right. 
\end{equation}
\begin{equation}
\label{4-08c}
\left\{
\begin{aligned}
& \Delta F_t = 
\big( v_t - F_t \big) \frac{\big( \nabla F_t \big)^2}{F_t(1-F_t)}
\quad (t,x) \in (0,\infty)\times (0,1) \; , \\
& 
\nabla F_t(0) = A^{-1} [F_t(0)-\alpha] \;, \quad
\nabla F_t(1) = B^{-1} [\beta - F_t(1)] \; .
\end{aligned}
\right.
\end{equation}
In this formula, $ R_t = \log [ F_t/(1-F_t)]$ and,
for $0< \varrho <1$, $D>0$, $0<a<1$, $M\in \bb R$, 
\begin{equation}
\label{5-02a}
\mf p_{\varrho, D} (a,M) \;=\; \frac{1}{D}\, \Big\{ [1-a]\, \varrho \,
e^M \;-\; a\, [1-\varrho] \, e^{-M}  \, \Big\}  \;.
\end{equation}

The first part of the proof of the upper bound consists in showing
that this trajectory is indeed the optimal one.  Lemma \ref{l08},
whose proof relies on the explicit expression for the rate functional
presented in Lemma \ref{l09}, states that this trajectory is optimal
provided the solution $v(t)$ to this equation relaxes to $\bar\rho$ as
$t\to\infty$.

To prove that $v_t$ relaxes to $\bar\rho$ or any other property of the
non-local system of equations \eqref{4-11c}--\eqref{4-08c} looks
hopeless. It turns out, however, that these equations can be expressed
in a simple form. The reason is that $F_t$ in
\eqref{4-11c}--\eqref{4-08c}, which corresponds to the momentum in the
Hamiltonian formalism, evolves according to an autonomous equation, a
remarkable and unexpected property.

Fix $\gamma\in \ms M_{\rm ac}$, and denote by $F^{(\gamma)}$ the
solution to \eqref{3-01}. Let $F^{(\gamma)}_t$ be the solution to the
heat equation \eqref{1-06} with initial condition $F^{(\gamma)}$
instead of $\gamma$. Define $v^{(\gamma)}_t$ as
\begin{equation}
\label{2-06} 
v^{(\gamma)}(t) \;:=\; F^{(\gamma)}(t) \; +\;  F^{(\gamma)}(t)\,
[1-F^{(\gamma)}(t)] \,
\frac {\Delta F^{(\gamma)}(t)}{ \big(\nabla F^{(\gamma)}(t)\big)^2} \; \cdot  
\end{equation}
By \eqref{3-01}, $v^{(\gamma)}(0)=\gamma$. Actually,
$F^{(\gamma)}_t = F(v^{(\gamma)}_t)$ for all $t\ge 0$, where
$F(v^{(\gamma)}_t)$ is the solution to \eqref{3-01} with $\gamma$
replaced by $v^{(\gamma)}_t$.

Proposition \ref{l10} asserts that for each $\gamma\in C^1([0,1])$
the pair $(v^{(\gamma)}_t, F^{(\gamma)}_t)$ solves the system of
equations \eqref{4-11c}--\eqref{4-08c}. This result provides,
therefore, an alternative and simple formulation of these equations.
Moreover, by Lemma \ref{l12},
$\lim_{t\to\infty} v^{(\gamma)}(t)=\bar\rho$, and, by Lemma \ref{l14},
the optimal path which solves the variational problem \eqref{1-04},
represented by $u^{(\gamma)}_{\rm opt}(t)$, defined on the time
interval $(-\infty, 0]$ instead of $[0,+\infty)$ as in \eqref{1-04},
is given by $u^{(\gamma)}_{\rm opt}(t)=v^{(\gamma)}(-t)$. Note that
$u^{(\gamma)}_{\rm opt}(0) = \gamma$,
$\lim_{t\to-\infty} u^{(\gamma)}_{\rm opt}(t) = \bar\rho$. Hence,
$u^{(\gamma)}_{\rm opt}$ connects $\bar\rho$ to $\gamma$ in a infinite
time window.

\begin{remark}
\label{rm5}
The boundary conditions in \eqref{4-11c} can be written as
\begin{equation}
\label{4-11d}
\left\{
\begin{aligned}
& \nabla v_t (0) \,-\, 2\, \sigma(v_t(0)) \, \nabla R_t(0) \,=\,
\frac{1}{A^*_t} \,[\, v(t,0) - \alpha^*_t \,]  \;.
\\
& \nabla v_t (1) \,-\, 2\, \sigma(v_t(1)) \, \nabla R_t(1) \,=\,
\frac{1}{B^*_t} \,[\, \beta^*_t - v(t,1)\,] \;, 
\end{aligned}
\right. 
\end{equation}
where
\begin{gather*}
\frac{1}{B^*_t}\;=\;
\frac{1}{B}\, \big\{\, (1-\beta)\, e^{R_t(1)} \;+\; \beta\,
e^{-R_t(1)}\, \big\} \;,
\quad \beta^*_t \;=\;
\frac{(1-\beta)\, e^{R_t(1)}}
{(1-\beta)\, e^{R_t(1)} \;+\; \beta\, e^{-R_t(1)}} \\
\frac{1}{A^*_t}\;=\; \frac{1}{A}\, \big\{\, (1-\alpha)\, e^{R_t(0)} \;+\;
\alpha\, e^{-R_t(0)} \, \big\} \;, \quad \alpha^*_t \;=\;
\frac{(1-\alpha)\, e^{R_t(0)}}
{(1-\alpha)\, e^{R_t(0)} \;+\; \alpha\, e^{-R_t(0)}}\;\cdot
\end{gather*}

In view of \eqref{4-11d}, equation \eqref{4-11c} corresponds to the
hydrodynamic equation of the weakly asymmetric exclusion process with
weak interactions at the boundary (cf. \cite{FGLN2021}).  Note that
this equation carries a positive drift to the right because
$\nabla \log [F/(1-F)] >0$. Moreover, the boundary densities and the
intensity of the interactions $\alpha$, $\beta$, $A$, $B$ are
time-dependent and given by $\alpha^*_t$, $\beta^*_t$, $A^*_t$,
$B^*_t$, respectively.
\end{remark}

\begin{remark}
\label{rm7}
As mentioned above, equations \eqref{4-11c}--\eqref{4-08c} represent
the adjoint hydrodynamic equation [that is, the PDE which describes
the evolution of the density under the adjoint dynamics]. Hence, in
the adjoint dynamics, the density evolves according to a weakly
asymmetric exclusion process. The drift at time $t$ is
$\log [F_t/(1-F_t)]$, where $F_t$ is the solution to \eqref{3-01} with
$\gamma$ replaced by the density profile at time $t$. The boundary
densities and intensities are given by the equations below
\eqref{4-11d}. 
\end{remark}

\begin{remark}
\label{rm6}
Notwithstanding the fact that the boundary densities have been
modified and a drift added, a straightforward computation shows that
the stationary profile of equation \eqref{4-11c} is still $\bar\rho$,
the stationary profile of the hydrodynamic equation \eqref{1-06}. More
precisely, as $F_t = F^{(\gamma)}$ solves equation \eqref{1-06},
$F_t \to \bar\rho$ as $t\to\infty$. Replace in equation \eqref{4-11c}
$R_t$ by $\log [\bar\rho /(1-\bar\rho)]$ and consider the associated
stationary equation [that is, replace $\partial_t v_t$ by $0$,
consider this equation in the space variable only and remove the
initial condition]. It's easy to check that $\bar\rho$ fulfills this
stationary equation.
\end{remark}

\section{The dynamical rate function}
\label{sec01}

For the reader's convenience, we recall here some properties of the
rate functional $I_{[0,T]} ( \,\cdot\, | \gamma)$ proved in
\cite{FGLN2021}.

The first estimate asserts that the cost of a trajectory in a interval
$[0,T]$ is bounded by the sum of its cost in the intervals $[0,S]$ and
$[S,T]$.

Let $\color{bblue} \tau_r u: \bb R_+ \times [0,1] \to \bb R$, $r>0$,
be the function defined by $\tau_r u (t,x) = u(t+r,x)$. For all
$\pi(t,dx) = u(t,x)\, dx$ in $D([0,T],\ms M_{\rm ac})$ and $0<S<T$,
\begin{equation}
\label{4-02b}
I_{[0,T]} (u\, |\, \gamma) \;\le\; I_{[0,S]} (u\, |\, \gamma)
\;+\; I_{[0,T-S]} (\tau_S u\, |\, u(S,\,\cdot\,))\;.
\end{equation}

\begin{theorem}
\label{mt2b} 
Fix $T>0$ and $\gamma\in \ms M_{\rm ac}$.  The function
$I_{[0,T]}(\cdot|\gamma):D([0,T],\mathcal{M})\to[0,\infty]$ is convex,
lower semicontinuous and has compact level sets.
\end{theorem}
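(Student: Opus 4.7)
The plan is to exploit the variational formula $I_{[0,T]}(\cdot|\gamma) = \sup_H J_{T,H}$ and derive the three properties from structural features of the functional $J_{T,H}$ defined in \eqref{1-01}.

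\textbf{Convexity.} Inspecting \eqref{1-01}, each term depends linearly on $u$ — including the boundary traces $u_t(0), u_t(1)$, since $\mf b_{\varrho,D}(a,M)$ is affine in $a$ by \eqref{4-05} — with the single exception of $-\int_0^T \langle \sigma(u_t), (\nabla H_t)^2\rangle dt$. Because $\sigma(a)=a(1-a)$ is concave, this remaining term is convex in $u$. Hence $u\mapsto J_{T,H}(u)$ is convex for every test field $H$. The indicator of the constraint $\{u_0=\gamma\}$ is also convex, so $I_{[0,T]}(\cdot|\gamma)$, being a supremum of convex functionals, is convex.

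\textbf{Lower semicontinuity.} The bulk integrals $\int \langle u_t,\Phi_t\rangle\, dt$ and $\int \langle \sigma(u_t),\Phi_t\rangle\, dt$ against smooth $\Phi$, as well as the endpoint pairings $\langle u_0,H_0\rangle$ and $\langle u_T,H_T\rangle$, are continuous in the Skorokhod topology on $D([0,T],\ms M)$. The obstruction to continuity of $J_{T,H}$ comes from the boundary-trace terms $u_t(0), u_t(1)$, which are not weak-continuous on $\ms M$. I would handle this by approximating: replace $\int u_t(0)\Phi_t(0)\, dt$ by $\int \langle u_t, \Phi_t\,\zeta_\epsilon\rangle \, dt$ with $\zeta_\epsilon$ a mollifier concentrated at the left endpoint (and similarly at the right one), substitute the analogous mollification inside the $\mf b_{\alpha,A}$ and $\mf b_{\beta,B}$ terms, and take the supremum over $H$ drawn from a countable dense family together with $\epsilon$ in a countable set. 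Each approximated $J_{T,H,\epsilon}$ is then continuous, so their supremum is lower semicontinuous, and one verifies that on profiles of finite energy the spatial Hölder regularity guaranteed by Lemma 4.1 of \cite{BLM09} ensures the approximation recovers the true value of $J_{T,H}$.

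\textbf{Compact level sets.} Since $\ms M$ is itself compact, compactness of $\{I_{[0,T]}\le C\}$ reduces by Arzel\`a–Ascoli to equicontinuity of trajectories in $D([0,T],\ms M)$. The first ingredient is an energy estimate: inserting into $J_{T,H}$ the test field $H=c\, G$ with $G$ smooth and optimizing in $c$ shows that there exists $\kappa>0$ with
\begin{equation*}
\mc Q_{[0,T]}(u) \;\le\; \kappa\,\bigl(1+I_{[0,T]}(u|\gamma)\bigr)\;,
\end{equation*}
uniformly on $\{I_{[0,T]}\le C\}$. This in turn bounds $\int (\nabla u_t)^2/\sigma(u_t)$, giving a uniform spatial modulus of continuity for a.e.\ $t$ and, in particular, uniform bounds on the boundary traces. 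Temporal equicontinuity is obtained by testing $J_{T,H}$ against $H(t,x)=\Phi(x)\,\chi_{[s_1,s_2]}(t)$ (suitably mollified in $t$), which yields a quantitative bound of the form $|\langle u_{s_2}-u_{s_1},\Phi\rangle|\le C(\Phi)\,\omega(s_2-s_1)$ for every $\Phi$ in a countable convergence-determining family. Combining spatial and temporal regularity, the level sets are relatively compact, and closedness follows from the already-established lower semicontinuity.

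\textbf{Main obstacle.} The main technical difficulty is the presence of boundary traces and of the exponential terms in $\mf b_{\varrho,D}$, which are novel to the weakly-coupled setting and prevent a purely weak-topology treatment. Both the lower semicontinuity argument (where one must justify the mollifier limit against the exponential boundary terms) and the energy estimate (where the convex duality with $\mf b_{\varrho,D}$ must be controlled) hinge on carefully coupling the Sobolev regularity from finite energy with the boundary contributions; this is the step I expect to be the most delicate.
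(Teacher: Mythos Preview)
The paper does not prove Theorem~\ref{mt2b}: it is stated in Section~\ref{sec01} as one of several results \emph{recalled} from \cite{FGLN2021}, with no argument given. So there is no proof in the paper to compare your proposal against.

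That said, your sketch follows the standard template used in \cite{BLM09, BDGJL2003} and is broadly the right strategy. A few remarks on points where the sketch is thin. For lower semicontinuity, the more serious issue is not the mollification of boundary traces per se but the fact that $I_{[0,T]}(\cdot|\gamma)$ is defined to be $+\infty$ off the set $D_{\gamma,\mc E}([0,T],\ms M_{\rm ac})$; you must show that a Skorokhod limit of a sequence with bounded rate function stays in that set. This requires the energy estimate $\mc Q_{[0,T]}(u)\le \kappa(1+I_{[0,T]}(u|\gamma))$ \emph{before} you can run the mollifier argument, together with the lower semicontinuity of $\mc Q_{[0,T]}$ itself (which you cite from \cite{BLM09}). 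So the logical order should be: first derive the energy bound, then use it both for lower semicontinuity and for compactness. Your ``main obstacle'' paragraph identifies the right difficulty --- the exponential boundary terms $\mf b_{\varrho,D}$ are genuinely new relative to the Dirichlet case and require care in the energy estimate --- and this is indeed where the work in \cite{FGLN2021} is concentrated.
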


\begin{definition}
\label{d04b}
Given $\gamma\in \ms M_{\rm ac}$, let $\Pi_\gamma$ be the collection
of all paths $\pi(t,dx) = u(t,x) dx$ in $D([0,T], \ms M_{\rm ac})$
such that
\begin{itemize}
\item[(a)] There exists $\mf t >0$, such that $u$ follows the hydrodynamic
equation \eqref{1-06} in the time interval $[0, \mf t]$. In
particular, $u(0,\cdot) = \gamma (\cdot)$.

\item[(b)] For every $0<\delta \le T$, there exists $\epsilon>0$ such that
$\epsilon \le u(t,x) \le 1-\epsilon$ for all $(t,x)$ in
$[\delta , T]\times[0,1]$;

\item[(c)] $u$ is smooth on $(0,T]\times [0,1]$.  
\end{itemize}
\end{definition}

\begin{theorem}
\label{mt3b} 
Fix $\gamma\in \ms M_{\rm ac}$.  For all $\pi$ in $D([0,T],\ms M)$
such that $I_{[0,T]}(\pi|\gamma)<\infty$, there exists a sequence
$\{\pi^{n}:n\ge1\}$ in $\Pi_\gamma$ such that $\pi^{n}$ converges to
$\pi$ in $D([0,T],\ms M)$ and $I_{[0,T]}(\pi^{n}|\gamma)$ converges to
$I_{[0,T]}(\pi|\gamma)$. Moreover, if there exists $\epsilon_0 >0$
such that $\epsilon_0 \le \gamma \le 1-\epsilon_0$, condition (b) in
Definition \ref{d04b} can be replaced by the existence of $\epsilon >0$
such that $\epsilon \le u(t,x) \le 1-\epsilon$ for all
$(t,x) \in [0,T] \times [0,1]$.
\end{theorem}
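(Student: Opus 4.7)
The plan is a three-step regularization combining convex interpolation with the hydrodynamic flow, boundary-compatible mollification, and prepending a hydrodynamic segment to match the initial condition; a diagonal extraction then produces the required sequence. Throughout, let $\psi$ denote the solution of \eqref{1-06} with initial datum $\gamma$, so that $I_{[0,T]}(\psi|\gamma) = 0$. For the first step, fix $\lambda \in (0,1)$ and set $u^\lambda_t := (1-\lambda)\, u_t + \lambda\, \psi_t$. Since $u_0 = \psi_0 = \gamma$, one has $u^\lambda_0 = \gamma$. Convexity of $I_{[0,T]}(\,\cdot\,|\gamma)$ (Theorem \ref{mt2b}) together with $I_{[0,T]}(\psi|\gamma) = 0$ yields $I_{[0,T]}(u^\lambda|\gamma) \le (1-\lambda)\, I_{[0,T]}(u|\gamma)$. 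By the parabolic strong maximum principle for \eqref{1-06}, $\psi_t$ lies in a compact subinterval of $(0,1)$ on $[\delta,T] \times [0,1]$ for each $\delta > 0$, and therefore so does $u^\lambda$. Skorohod convergence $u^\lambda \to u$ is immediate, and lower semicontinuity (Theorem \ref{mt2b}) gives $I_{[0,T]}(u^\lambda|\gamma) \to I_{[0,T]}(u|\gamma)$.

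For the smoothing step, since $u^\lambda$ is bounded away from $0$ and $1$ on $[\delta, T] \times [0,1]$, mollify in space and time with a kernel compatible with the Robin boundary data (for instance by extending $u^\lambda$ near $x = 0,1$ via the Robin-Laplacian semigroup of Appendix \ref{sec04}, or by a suitable boundary reflection). This produces a smooth approximant $u^{\lambda,\epsilon}$ inheriting the lower bound on $[\delta, T]$, and using the decomposition \eqref{1-01b} together with the smoothness of $\sigma$, $\mf b_{\alpha, A}$, $\mf b_{\beta, B}$ on the range of $u^\lambda$, one verifies $I_{[0,T]}(u^{\lambda,\epsilon}|u^{\lambda,\epsilon}_0) \to I_{[0,T]}(u^\lambda|\gamma)$ as $\epsilon \to 0$ term by term.

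To realize condition (a) of Definition \ref{d04b}, fix $\mf t > 0$ small, replace $u^{\lambda,\epsilon}$ on $[0,\mf t]$ by the hydrodynamic solution from $\gamma$, and bridge over a short subinterval by a smooth interpolation whose cost vanishes as $\mf t \to 0$. The subadditivity bound \eqref{4-02b} combined with $I_{[0,\mf t]}(\psi|\gamma) = 0$ controls the total cost, and a diagonal extraction over $\lambda, \epsilon, \mf t \to 0$ supplies the sequence $\pi^n \in \Pi_\gamma$ with the required convergences. When $\epsilon_0 \le \gamma \le 1 - \epsilon_0$, the maximum principle gives a lower bound on $\psi$ uniform on the full cylinder $[0,T] \times [0,1]$, and hence on $u^\lambda$, yielding the stronger form of condition (b) stated in the ``moreover'' clause.

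The main technical obstacle is the second step: a naive convolution breaks the Robin boundary structure encoded in the terms $\mf b_{\alpha, A}, \mf b_{\beta, B}$ in \eqref{1-01}. A boundary-respecting mollification must therefore be selected, and the convergence of the boundary contributions has to be established directly from \eqref{1-01b} and the explicit form of $\mf b_{\varrho, D}$ in \eqref{4-05}, tracking the exponential nonlinearities and the weak boundary traces with care. A subsidiary difficulty is the bridging in Step 3: one must verify that the interpolation between $\psi_{\mf t}$ and $u^{\lambda,\epsilon}_{\mf t}$ incurs only a cost that tends to zero, which uses in an essential way the fact that $u^{\lambda,\epsilon}$ is already smooth and bounded away from $0,1$ on $[\mf t, T]$.
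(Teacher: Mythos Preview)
The paper does not actually prove Theorem~\ref{mt3b}. Section~\ref{sec01} opens with the sentence ``For the reader's convenience, we recall here some properties of the rate functional $I_{[0,T]} ( \,\cdot\, | \gamma)$ proved in \cite{FGLN2021},'' and Theorem~\ref{mt3b} is then stated without proof, as a result imported from that reference. There is therefore no proof in the present paper to compare your proposal against.

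That said, your three-step outline (convex interpolation with the hydrodynamic flow to gain positivity, mollification to gain smoothness, prepending a hydrodynamic segment to enforce condition~(a)) is exactly the standard architecture for such $I$-density results, going back to \cite{BLM09} in the Dirichlet case and presumably followed in \cite{FGLN2021} for the Robin case. You have correctly isolated the genuine novelty relative to the Dirichlet situation: the boundary contributions $\mf b_{\varrho,D}$ are exponential in the test function $H$ and depend on the \emph{trace} $u_t(0), u_t(1)$, so the mollification step must be set up so that the traces converge in a topology strong enough to pass to the limit in those terms. Your proposal is honest in flagging this as the main obstacle rather than sweeping it under the rug; what remains is to actually carry out that boundary analysis, which is the substance of the result and not merely a detail. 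The bridging in Step~3 is routine once Step~2 is in hand, since both endpoints are then smooth and uniformly bounded away from $0,1$.
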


Let $\color{bblue} \Omega_T$ be the cylinder $(0,T)\times (0,1)$.  Fix
$\pi$ in $D([0,T], \ms M_{\rm ac})$, $\pi(t,dx) = u(t,x)\, dx$.  Let
$\color{bblue} \mc H^1(\Omega_T)$ be the Hilbert spaces induced by the
sets $C^\infty (\Omega_T)$ endowed with the scalar products,
$\<\!\< G,H \>\!\>_{1,2}$ defined by
\begin{equation*}
\begin{gathered}
\<\!\< G,H \>\!\>_{1,2} \;=\; 
\int_0^T dt \int_0^1 G_t\, H_t\; dx \;+\;
\int_0^T dt \int_0^1 \nabla G_t\, \nabla H_t\; dx\;.
\end{gathered}
\end{equation*}

Recall from \eqref{5-02a} the definition of
$\mf p_{\varrho, D} (a,M)$.  For $0< \varrho <1$, $D>0$, $0<a<1$,
$M\in \bb R$, let
\begin{equation}
\label{5-02}
\mf c_{\varrho, D} (a,M) \;=\; \frac{1}{D}\, \Big\{ [1-a]\, \varrho \,
[1 - e^M + M e^M ] \;+\; a\, [1-\varrho] \,
[\, 1 - e^{-M}  - M  e^{-M}] \, \Big\}  \;.
\end{equation}

\smallskip\noindent{\it Note:} for a trajectory $u_t$ such that
$\delta \le u (t,x)\le 1-\delta$ for all
$(t,x) \in [0,T] \times [0,1]$, the space $\mc H^1(\Omega_T)$
introduced above coincides with the space $\mc H^1(\sigma(u))$
introduced in \cite{FGLN2021}.

\begin{lemma}
\label{l09} 
Fix a trajectory $\pi$ in $D([0,T], \ms M_{\rm ac})$,
$\pi(t,dx) = u(t,x)\, dx$. Assume that
$u\in C^{1,2} ([0,T]\times[0,1])$, there exists $\delta>0$ such that
$\delta \le u(t,x) \le 1-\delta$ for all
$(t,x) \in [0,T] \times [0,1]$ and $I_{[0,T]} (u \,|\, \gamma)$ is
finite, where $\gamma = u_0$.  Then, there exists a function $H$ in
$\mc H^1(\Omega_T)$ such that $u$ is the unique weak solution to
\begin{equation}
\label{5-01} 
\left\{
\begin{aligned}
& \partial_t u \;=\; \Delta u \,-\,
2\, \nabla \{ \sigma(u) \ \nabla H \}\; , \\
& \nabla u_t (1) \,-\, 2\, \sigma(u_t(1)) \, \nabla H_t(1) \,=\,
\mf p_{\beta, B} \big(\, u_t(1)\,,\, H_t(1)\, \big) \;, \\
& \nabla u_t (0) \,-\, 2\, \sigma(u_t(0)) \, \nabla H_t(0) \,=\,
-\, \mf p_{\alpha, A} \big(\, u_t(0)\,,\, H_t(0)\, \big) \;, \\
& u(0, \cdot) = \gamma (\cdot)\; .
\end{aligned}
\right.
\end{equation}
Moreover, 
\begin{equation}
\label{5-03} 
\begin{aligned}
I_{[0,T]} (u \,|\, \gamma) \; & =\; \int_0^T \<\, \sigma(u_t)\,,\,
( \nabla H_t)^2\, \> \; dt +\; \int_0^T
\mf c_{\beta, B} \big(\, u_t(1)\,,\, H_t(1)\, \big)\; dt \\
\; & +\; \int_0^T
\mf c_{\alpha, A} \big(\, u_t(0)\,,\, H_t(0)\, \big)\; dt \;.
\end{aligned}
\end{equation}
\end{lemma}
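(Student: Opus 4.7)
The plan is to realize $I_{[0,T]}(u\mid \gamma)$ as the value of a strictly concave variational problem in $H$, identify the unique maximizer through its Euler--Lagrange equations (which turn out to be precisely \eqref{5-01}), and then evaluate $J_{T,H}(u)$ at this critical point to derive \eqref{5-03}.

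For existence and uniqueness of the optimal $H$, I would use that under the hypothesis $\delta \le u\le 1-\delta$ the map $H\mapsto J_{T,H}(u)$ is strictly concave on $\mc H^1(\Omega_T)$: the bulk quadratic $-\int \langle \sigma(u_t),(\nabla H_t)^2\rangle\, dt$ is uniformly coercive in $\nabla H$ since $\sigma(u)\ge \sigma(\delta) >0$, and $\partial_M^2 \mf b_{\varrho,D}(a,M)=(1/D)[(1-a)\varrho e^M + a(1-\varrho)e^{-M}]>0$ makes the boundary term $-\int \mf b\, dt$ strictly concave in the traces $H(\cdot,0)$ and $H(\cdot,1)$. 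Combined with the finiteness of $I_{[0,T]}(u\mid \gamma)$, a direct-method argument extracts a maximizing sequence converging to a unique $H\in \mc H^1(\Omega_T)$; the required $\mc H^1$ control comes from the $L^2$ bound on $\nabla H$ together with trace control obtained from the exponential growth of $-\mf b$ and a Poincaré inequality.

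Next I would compute the Gâteaux derivative $DJ_{T,H}(u)[G]$ using the equivalent form \eqref{1-01b}, then integrate by parts in space on $\int \langle \nabla u - 2\sigma(u)\nabla H, \nabla G\rangle\, dt$. Since $\partial_M \mf b_{\varrho,D}=\mf p_{\varrho,D}$, the condition $DJ_{T,H}(u)[G]=0$ for arbitrary $G$ yields the bulk equation $\partial_t u = \Delta u - 2\nabla(\sigma(u)\nabla H)$ together with the two Robin-type boundary conditions listed in \eqref{5-01}. Uniqueness of weak solutions of \eqref{5-01} given this $H$ then follows from a standard Grönwall argument on the difference of two solutions, exploiting the Lipschitz character of $\sigma$ and of $\mf p_{\varrho,D}(\cdot,M)$ on bounded subsets of both arguments, together with the trace estimate in $\mc H^1(\Omega_T)$.

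Finally, for \eqref{5-03}, I would substitute the optimal $H$ back into $J_{T,H}(u)$. Rewriting $\int \langle \partial_t u, H\rangle\, dt$ via the PDE and integrating by parts in space produces, using the boundary conditions on $u$, endpoint contributions of the form $\int H(t,i)\, \mf p_{\varrho,D}(u(t,i),H(t,i))\, dt$, while the bulk terms collapse to $\int \langle \sigma(u),(\nabla H)^2\rangle\, dt$. The elementary algebraic identity $M\,\mf p_{\varrho,D}(a,M)-\mf b_{\varrho,D}(a,M)=\mf c_{\varrho,D}(a,M)$, which one verifies directly from the definitions of $\mf p$, $\mf b$ and $\mf c$, then converts the boundary contributions into precisely the two $\mf c$-integrals of \eqref{5-03}. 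The main obstacle is the functional-analytic step: showing that the maximizer actually lives in $\mc H^1(\Omega_T)$ rather than in a weaker completion, which requires the exponential boundary penalties of $-\mf b$ to translate, via trace theory, into genuine $H^1$ control of $H$.
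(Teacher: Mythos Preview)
The paper does not actually prove this lemma; it is quoted from the companion paper \cite{FGLN2021} (see the opening sentence of Section~\ref{sec01}), so there is no in-paper proof to compare against. Your outline is the natural variational argument and is almost certainly what is carried out in \cite{FGLN2021}: strict concavity of $H\mapsto J_{T,H}(u)$ (in the bulk from $\sigma(u)\ge\sigma(\delta)>0$, at the boundary from $\partial_M^2\mf b_{\varrho,D}>0$), coercivity yielding a unique maximizer in $\mc H^1(\Omega_T)$, the Euler--Lagrange equation being exactly the weak form of \eqref{5-01} in the sense of Definition~\ref{d03}, and the algebraic identity $M\,\mf p_{\varrho,D}(a,M)-\mf b_{\varrho,D}(a,M)=\mf c_{\varrho,D}(a,M)$ collapsing $J_{T,H}(u)$ at the optimum to \eqref{5-03}. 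One small remark: in your Euler--Lagrange step you do not need to integrate by parts to produce $\nabla(\sigma(u)\nabla H)$ (which would require more regularity of $H$ than you have); the vanishing of the G\^ateaux derivative already \emph{is} the weak formulation \eqref{8-01}, once you use the smoothness of $u$ to write $\int\langle\partial_t u,G\rangle$. Your identification of the genuine technical point --- turning the exponential boundary penalty in $-\mf b$ into $L^2$ control of $H$ itself (not just $\nabla H$) via a Poincar\'e--trace inequality --- is accurate, and this is where the hypothesis $\delta\le u\le 1-\delta$ enters a second time to keep the coefficients $(1-u)\varrho$ and $u(1-\varrho)$ in $\mf b$ bounded below.
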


Weak solutions to equation \eqref{5-01} are introduced in Definition
\ref{d03}. Theorem \ref{mt5} states that for each $\gamma\in\ms
M_{\rm ac}$ there exists one and only one weak solution.

\section{The Euler-Lagrange equation for $S$}
\label{sec02}

The Euler--Lagrange equation associated to the variational problem
\eqref{2-03} is given by the non-linear equation with Robin boundary
conditions \eqref{3-01}. In this section, we provide a precise meaning
to this equation, prove existence and uniqueness of solutions, and
prove Theorem \ref{t02}. The approach is taken from \cite{BDGJL2003},
but there is a serious technical difficulty in the proof of
uniqueness. The idea there is to extend the problem to the interval
$[-A, 1+B]$, see Lemma \ref{l03} and the proof of Theorem \ref{t01}.

Recall the definition of $\mc F$ introduced in \eqref{2-01}. For
$F\in \mc F$, let
\begin{equation}
\label{3-02}
\mc R_\gamma (x) \;=\; \mc R_\gamma (F\,;\, x)=
\big[\, \gamma(x)-F(x) \,\big]\,
\frac{\nabla F (x)}{F(x)\, [1-F(x)]}\;, 
\end{equation}
With this notation, equation \eqref{3-01} takes the form
\begin{equation}
\label{3-04}
\left\{
\begin{aligned}
& \Delta F \;=\;  \nabla F\, \mc R_\gamma \;, \\
& \vphantom{\Big(}
{\displaystyle \nabla F(0) = A^{-1} [F(0)-\alpha] \;, \quad
\nabla F(1) = B^{-1} [\beta - F(1)] }\; . 
\end{aligned}
\right.
\end{equation}

To prove the existence and uniqueness of a solution to \eqref{3-01},
following \cite{BDGJL2003}, we formulate \eqref{3-04} as the
integro--differential equation
\begin{equation}
\label{3-03}
F(x) \;=\; \alpha \;+\; (\beta-\alpha)\,
\frac{A \;+\; \int_{0}^x
\exp \{ \int_{0}^y \mc R_\gamma(F;z)\, dz \}\, dy}
{A + \int_{0}^1  \exp \{ \int_{0}^y \mc R_\gamma(F;z)\, dz \}
\, dy + B\, 
\exp \{ \int_{0}^1 \mc R_\gamma(F;y)\, dy\}} \; .
\end{equation}

\begin{remark}
\label{rm02}
If $\gamma=\bar\rho$, then $F= \bar\rho$ solves \eqref{3-01} and
\eqref{3-03}.  Moreover, if $F\in C^2([0,1])$ is a solution to the
problem (\ref{3-01}) such that $\nabla F(x)>0$ for $x\in[0,1]$, then
$F$ is also a solution to the integro--differential equation
(\ref{3-03}).  Conversely, if $F\in C^1([0,1])$ is a solution to
(\ref{3-03}), then the boundary conditions in (\ref{3-01}) are
satisfied. Moreover, $\nabla F(x)>0$, $\Delta F (x)$ exists for almost
every $x$ and the differential equation in (\ref{3-01}) holds almost
everywhere.  Furthermore, if $\gamma\in C([0,1])$, then
$F\in C^2([0,1])$ and (\ref{3-01}) holds everywhere.
\end{remark}

\begin{theorem}
\label{t01} 
For each $\gamma\in\ms M_{\rm ac}$, there exists a unique $F\in \mc F$
which solves \eqref{3-03}.
\end{theorem}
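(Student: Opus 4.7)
The plan is to prove existence by applying Schauder's fixed-point theorem to the integral operator $\Psi$ whose fixed points are exactly the solutions of \eqref{3-03}, and to prove uniqueness by extending $F$ linearly to $[-A, 1+B]$, thereby converting the Robin problem into a nonlinear Dirichlet problem to which Lemma \ref{l03} applies.

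For existence, let $\mc B \subset C([0,1])$ be the closed convex subset of non-decreasing functions taking values in $[\alpha,\beta]$, and define $\Psi \colon \mc B \to C([0,1])$ by letting $\Psi(F)(x)$ be the right-hand side of \eqref{3-03}. Differentiating in $x$ gives
\begin{equation*}
\nabla \Psi(F)(x) \;=\; (\beta - \alpha)\,\frac{\exp\{\int_0^x \mc R_\gamma(F;z)\,dz\}}{Z_F}\;,
\end{equation*}
where $Z_F$ denotes the denominator in \eqref{3-03}; this is strictly positive and the fraction in \eqref{3-03} lies in $(0,1)$, so $\Psi$ maps $\mc B$ into itself. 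Uniform bounds on $\int_0^x \mc R_\gamma(F;z)\,dz$ for $F \in \mc B$ bounded away from $\{0,1\}$ give equicontinuity of $\Psi(\mc B)$ and hence precompactness by Arzelà--Ascoli. Continuity of $\Psi$ in the uniform topology is the nontrivial point because $\mc R_\gamma(F;z)$ contains $\nabla F$; integrating by parts recasts $\int_0^y \mc R_\gamma(F;z)\,dz$ as boundary terms plus an integral involving $\log[F/(1-F)]$, which is continuous in $F$ under uniform convergence away from $\{0,1\}$. Schauder's theorem then supplies a fixed point in $\mc B$, and Remark \ref{rm02} places it in $\mc F$ and certifies that it solves \eqref{3-01}.

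For uniqueness, any $F \in \mc F$ solving \eqref{3-01} extends to $\tilde F \in C^1([-A, 1+B])$ by linear interpolation on the two end intervals: $\tilde F(x) = \alpha + \nabla F(0)(x+A)$ on $[-A,0]$ and $\tilde F(x) = F(1) + \nabla F(1)(x-1)$ on $[1, 1+B]$. The Robin conditions in \eqref{3-01} ensure $C^1$ matching at $x=0$ and $x=1$, and force $\tilde F(-A) = \alpha$, $\tilde F(1+B) = \beta$. Extending $\gamma$ by $\tilde\gamma := \tilde F$ on the two end intervals makes the source term vanish on the linear pieces, so $\tilde F$ solves the nonlinear Dirichlet boundary-value problem
\begin{equation*}
\Delta \tilde F \;=\; (\tilde\gamma - \tilde F)\,\frac{(\nabla \tilde F)^2}{\tilde F(1-\tilde F)} \text{ on } (-A, 1+B)\;, \quad \tilde F(-A)=\alpha\;,\ \tilde F(1+B)=\beta\;.
\end{equation*}
The uniqueness assertion of Lemma \ref{l03}, applied to this extended problem, identifies the extensions of any two $\mc F$-solutions of \eqref{3-01}.

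The main obstacle is proving the uniqueness statement for the extended Dirichlet problem itself. The nonlinear source $(\tilde\gamma - \tilde F)(\nabla \tilde F)^2/[\tilde F(1-\tilde F)]$ changes sign as $\gamma$ crosses $F$, so no direct comparison principle is available. A natural route is to use the first integral $\log \nabla F(x) = \log \nabla F(0) + \int_0^x (\gamma - F)\,\nabla\log[F/(1-F)]\,dy$, which shows that a solution of the ODE is determined by the single parameter $F(0)$; the second Robin condition then becomes a scalar equation in $F(0)$, and uniqueness reduces to strict monotonicity of the resulting boundary map. Pushing this monotonicity through despite the nonlocal, sign-indefinite character of the source is, as the authors emphasize, the delicate new technical ingredient that has no counterpart in the strong-interaction setting of \cite{BDGJL2003}.
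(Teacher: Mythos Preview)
Your existence argument via Schauder is in the same spirit as the paper's, though the paper works directly in $C^1([0,1])$ (on the closed convex set $\mc B$ of functions satisfying the Robin boundary conditions with $p\le\nabla F\le q$), where $\mc R_\gamma(F;\cdot)$ is defined without any reinterpretation and the equicontinuity of the image follows from a uniform Lipschitz bound on $\nabla\mc K_\gamma(F)$. Your choice to work in $C([0,1])$ forces you to make sense of $\int_0^y\mc R_\gamma(F;z)\,dz$ for merely continuous monotone $F$; the integration-by-parts rewriting you sketch is not quite correct as written, since $\gamma\in\ms M_{\rm ac}$ has no pointwise boundary values and is not BV, so the ``boundary terms plus an integral involving $\log[F/(1-F)]$'' formulation does not go through. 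One can instead interpret $\int_0^y(\gamma-F)\,d\log[F/(1-F)]$ as a Lebesgue--Stieltjes integral, but you would have to say so and check continuity in that framework.

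The real gap is in uniqueness. Lemma \ref{l03} contains no uniqueness assertion: it only says that the linear extension $F_{\rm ext}$ of a Robin solution solves the Dirichlet integro-differential problem \eqref{3-08} with source $\gamma_{\rm ext}$. You cannot simply ``apply the uniqueness assertion of Lemma \ref{l03}''. Worse, even if you had a uniqueness theorem for \eqref{3-08} with a \emph{fixed} source $\varphi$, it would not apply here, because two Robin solutions $F^{(1)},F^{(2)}$ produce two \emph{different} extended sources $\gamma^{(1)}_{\rm ext},\gamma^{(2)}_{\rm ext}$ (they coincide with $F^{(j)}_{\rm ext}$ on $[-A,0]\cup[1,1+B]$). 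The paper confronts this directly: if the initial slopes $\nabla F^{(j)}_{\rm ext}(-A)$ agree, then the extended sources agree on $[-A,1]$ and Gronwall on \eqref{3-05b} gives equality; if the slopes differ, one integrates the first-integral identity $\nabla\big(F(1-F)/\nabla F\big)=1-F-\gamma_{\rm ext}$ to obtain \eqref{3-06}, locates the first crossing point $x_0$ of $F^{(1)}_{\rm ext}$ and $F^{(2)}_{\rm ext}$, and derives a sign contradiction on $\nabla F^{(1)}_{\rm ext}(x_0)-\nabla F^{(2)}_{\rm ext}(x_0)$, with a separate case analysis according to whether $x_0$ lies in $(0,1]$ or in $(1,1+B]$. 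Your final paragraph gestures at a shooting argument via monotonicity of a boundary map but does not carry it out; that is precisely the step the paper identifies as the ``serious technical difficulty'', and without it the uniqueness proof is incomplete.
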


The existence is proven by applying Schauder's fixed point
theorem. The argument requires some notation.  For each
$\gamma\in\ms {M}_{\rm ac}$ consider the map
$\color{bblue} \mc K_\gamma : \mc{F} \to C^1\big([0,1]\big)$ defined
by
\begin{equation}
\label{Kg} 
\mc K_\gamma(F) (x) \;:=\;  
\alpha \;+\; (\beta-\alpha)\,
\frac{A \;+\; \int_{0}^x
\exp \{ \int_{0}^y \mc R_\gamma(F;z)\, dz \}\, dy}
{A + \int_{0}^1  \exp \{ \int_{0}^y \mc R_\gamma(F;z)\, dz \}
\, dy + B\, 
\exp \{ \int_{0}^1 \mc R_\gamma(F;y)\, dy\}} \; .
\end{equation}

Let $p$ and $q$ be given by
\begin{equation}
\label{3-15}
p\;:=\; \frac{\alpha\, (\beta - \alpha)}{A\alpha + (B +1) \beta}
\, \frac{1-\beta}{1-\alpha}\;, \quad
q\;:=\; \frac{(1-\alpha)\, (\beta - \alpha)}
{A(1-\alpha) + (B +1) (1-\beta)} \, \frac{\beta}{\alpha}\;.
\end{equation}
Note that $0<p<q$ because
\begin{equation*}
\frac{p}{\beta-\alpha} \;=\;
\frac{1}{A + (B +1) (\beta/\alpha)} \, \frac{1-\beta}{1-\alpha}
\;<\;
\frac{1}{A + (B +1) [(1-\beta)/(1-\alpha)]} \, \frac{\beta}{\alpha}
\;=\; \frac{q}{\beta-\alpha} \;\cdot
\end{equation*}
The inequality above follows from the fact that $(1-\beta)\alpha <
\beta (1-\alpha)$ as $\alpha <\beta$.

Denote by $\mc B_{\rm bc}$ the subset of functions in $C^1([0,1])$
which satisfy the boundary conditions of the Euler-Lagrange equation
\eqref{3-01}:
\begin{equation*} 
\mc B_{\rm bc} \;:=\; \big\{ \, F \in C^1 ([0,1]) :
\nabla F(0) = A^{-1} [F(0)-\alpha] \;, \;\;
\nabla F(1) = B^{-1}  [\beta - F(1)]   \, \big\} \;,
\end{equation*}
and by $\mc B$ the subset of $\mc B_{\rm bc}$ given by
\begin{equation*} 
\mc B \;:=\; \big\{ \, F \in \mc B_{\rm bc} :
p\le \nabla F(x) \le q \;\;\forall\; x\in [0,1] \, \big\} \;.
\end{equation*}
Note that $\mc B_{\rm bc}$, $\mc B$ are closed and convex, and that
$\mc B$ is contained in $\mc F$. To establish this last assertion,
write that
$F(x) \ge F(0) = \alpha + A \nabla F(0) \ge \alpha + Ap > \alpha$
because $\nabla F(x)\ge p>0$. A similar argument shows that
$F(x) \le F(1) = \beta - B \nabla F (1) < \beta - Bp < \beta$. In
particular, for every $F\in \mc B$,
\begin{equation}
\label{3-07}
\alpha \,+\,  Ap \;\le\; F(x) \;\le\; \beta \,-\, Bp \;.
\end{equation}

\begin{lemma}
\label{l01}
Fix $\gamma\in\ms M_{\rm ac}$. Then,
\begin{itemize}
\item [(a)] The functional $\mc K_\gamma$ is a continuous map;
\item [(b)] $\mc K_\gamma(\mc F)\subset\mc B$;
\item [(c)] There exists a finite constant $C_0$, such that
\begin{equation*}
\big|\, \nabla \mc K_\gamma(F) (x) \,-\,
\nabla \mc K_\gamma(F) (y)\,\big|\;\le\;
C_0\, |x-y|
\end{equation*}
for all $F\in \mc B$, $x$, $y\in [0,1]$.
\end{itemize}
\end{lemma}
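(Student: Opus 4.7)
The plan is to read everything off the explicit formula \eqref{Kg}. Set $E(y) := \exp\{\int_0^y \mc R_\gamma(F;z)\,dz\}$ and $D := A + \int_0^1 E(y)\,dy + B\,E(1)$, so that
\begin{equation*}
\mc K_\gamma(F)(x) \;=\; \alpha \,+\, (\beta-\alpha)\,\frac{A + \int_0^x E(y)\,dy}{D}\;, \qquad
\nabla \mc K_\gamma(F)(x) \;=\; (\beta-\alpha)\,\frac{E(x)}{D}\;.
\end{equation*}
The one a priori ingredient I need is a two-sided bound on $E$, uniform over $F\in\mc F$:
\begin{equation*}
\frac{1-\beta}{1-\alpha}\;\le\;\frac{1-F(y)}{1-F(0)}\;\le\; E(y)\;\le\;\frac{F(y)}{F(0)}\;\le\;\frac{\beta}{\alpha}\;.
\end{equation*}
To establish this I would decompose $\mc R_\gamma = \gamma\,\nabla F/[F(1-F)] - \nabla F/(1-F)$; the substitution $u=F(z)$ turns $\int_0^y \nabla F/(1-F)\,dz$ and $\int_0^y \nabla F/[F(1-F)]\,dz$ into $\log[(1-F(0))/(1-F(y))]$ and $\log[F(y)(1-F(0))/(F(0)(1-F(y)))]$ respectively, and since $0\le\gamma\le 1$ the $\gamma$-piece of $\int_0^y\mc R_\gamma$ is sandwiched between $0$ and the second of these two quantities. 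Exponentiation, together with $\alpha < F < \beta$, gives the displayed bounds.

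With this in hand, part (b) follows by direct computation. The boundary conditions come from $E(0)=1$: $\mc K_\gamma(F)(0)-\alpha = (\beta-\alpha)A/D = A\,\nabla \mc K_\gamma(F)(0)$, and symmetrically $\beta-\mc K_\gamma(F)(1) = (\beta-\alpha)B\,E(1)/D = B\,\nabla \mc K_\gamma(F)(1)$. For the gradient bounds I would use the $L^\infty$ control on $E$ to estimate $D \le [A\alpha+(B+1)\beta]/\alpha$ and $D \ge [A(1-\alpha)+(B+1)(1-\beta)]/(1-\alpha)$; combined with the matching bounds on $E(x)$ these give exactly the constants $p$, $q$ of \eqref{3-15}.

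For part (a), if $F_n\to F$ in $C^1$ with $F,F_n\in\mc F$, then $F$ is bounded away from $\alpha$ and $\beta$ on the compact $[0,1]$ and the same holds eventually for $F_n$, so $\mc R_\gamma(F_n;\cdot) \to \mc R_\gamma(F;\cdot)$ in $L^\infty([0,1])$; then $E_n \to E$ uniformly, $D_n \to D$, and the explicit formulas give $C^1$-convergence of $\mc K_\gamma(F_n)$. For part (c), differentiating once more yields $\nabla^2 \mc K_\gamma(F)(x) = (\beta-\alpha)\,\mc R_\gamma(F;x)\,E(x)/D$ for almost every $x$. On $\mc B$ the bound \eqref{3-07} gives $F(1-F)\ge(\alpha+Ap)(1-\beta+Bp)>0$, while $|\gamma-F|\le 1$, so $|\mc R_\gamma(F;\cdot)|$ is bounded in $L^\infty$ by a constant depending only on $\alpha,\beta,A,B$; combined with $E\le\beta/\alpha$ and $D\ge A$ this gives a uniform a.e.\ bound on $\nabla^2 \mc K_\gamma(F)$, which integrated delivers the Lipschitz estimate with an explicit $C_0$.

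The hard part is really the two-sided bound on $E$: it is what couples $\mc K_\gamma$ back into $\mc F$ and supplies the matching estimate on $D$. Once it is in place, (a), (b) and (c) are straightforward manipulations of \eqref{Kg}, and I expect no further obstacle.
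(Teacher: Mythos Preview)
Your proposal is correct and follows essentially the same approach as the paper. The key two-sided bound on $E(y)$ is precisely what the paper establishes (via the equivalent pointwise estimate $-\nabla F/(1-F)\le \mc R_\gamma\le \nabla F/F$), and your treatment of (a), (b), (c) from there is just a slightly more explicit version of what the paper sketches; the only thing you leave implicit in (c) is the bound $\nabla F\le q$ on $\mc B$, which you need to control $|\mc R_\gamma|$, but that is part of the definition of $\mc B$.
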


\begin{proof}
Assertion (a) follows from the definitions of $\mc R_\gamma$ and
$\mc K_\gamma$. We turn to (b). Fix $F\in\mc F$. It is easy to show
that $\mc K_\gamma(F)$ satisfies the boundary condition of
\eqref{3-01}. It remains to derive the bounds on the derivative of
$\mc K_\gamma(F)$. As $0\le \gamma\le 1$, $-F \le \gamma - F \le
1-F$. Therefore, as $\nabla F\ge 0$ and $\alpha \le F\le \beta$,
\begin{equation*}
\frac{-\nabla F}{1-F} \;\le\; \mc R_\gamma \;\le\; \frac{\nabla F}{F}\;\cdot
\end{equation*}
It follows from these inequalities that
\begin{equation*}
\frac{1-\beta}{1-\alpha}\;\le\; \exp\Big\{ \int_0^x \mc R_\gamma
(F;y)\; dy \, \Big\}\;\le\; \frac{\beta}{\alpha}
\end{equation*}
for all $0\le x\le 1$. Reporting these bounds in the definition of
$\mc K_\gamma (F)$ yields that $p\le \mc K_\gamma (F) \le q$, as
claimed.

The proof of the last assertion of the lemma relies on the previous
two bounds and the bound $p\le \nabla F \le q$ which holds for all functions
in $\mc B$.
\end{proof}

\begin{corollary}
\label{l02}
The integro-differential equation \eqref{3-03} has a solution in
$\mc{B}$.
\end{corollary}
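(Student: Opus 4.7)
The plan is to apply Schauder's fixed point theorem to the map $\mc K_\gamma$ acting on the set $\mc B$, which, by its definition, is a closed convex subset of the Banach space $C^1([0,1])$ endowed with the norm $\|\cdot\|_{C^1}$. The set $\mc B$ is nonempty: for instance, the stationary profile $\bar\rho$ (the linear interpolation between $\alpha$ and $\beta$) satisfies the boundary conditions and has constant derivative $(\beta-\alpha)/(1+A+B)$, which lies in $[p,q]$ by the definitions of $p$ and $q$ given in \eqref{3-15}.

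First I would check that $\mc K_\gamma$ sends $\mc B$ into itself. Since $\mc B \subset \mc F$ by the argument given just after the definition of $\mc B$, part (b) of Lemma \ref{l01} yields $\mc K_\gamma(\mc B) \subset \mc K_\gamma(\mc F) \subset \mc B$. Continuity of the restriction of $\mc K_\gamma$ to $\mc B$, in the $C^1$ topology, follows immediately from Lemma \ref{l01}(a).

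The remaining ingredient is precompactness of $\mc K_\gamma(\mc B)$ in $C^1([0,1])$. For this I would invoke Lemma \ref{l01}(c): every $G = \mc K_\gamma(F)$ with $F\in\mc B$ satisfies
\begin{equation*}
|\nabla G(x) - \nabla G(y)| \;\le\; C_0\, |x-y|\;,\qquad x,y\in[0,1]\;,
\end{equation*}
so the family $\{\nabla \mc K_\gamma(F) : F\in \mc B\}$ is equi-Lipschitz and, since its values lie in $[p,q]$, also uniformly bounded. By Arzelà–Ascoli, this family is relatively compact in $C([0,1])$. Combined with the uniform bounds $\alpha + Ap \le \mc K_\gamma(F) \le \beta - Bp$ from \eqref{3-07} and the fact that $\nabla \mc K_\gamma(F)$ is uniformly bounded, one gets equicontinuity of $\mc K_\gamma(\mc B)$ itself and hence relative compactness of $\mc K_\gamma(\mc B)$ in $C^1([0,1])$.

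Schauder's fixed point theorem then supplies an $F\in \mc B$ with $\mc K_\gamma(F) = F$, which is precisely a solution of the integro-differential equation \eqref{3-03}. I do not expect any real obstacle here: all three hypotheses of Schauder have essentially been prepared by Lemma \ref{l01}, and the only point to verify carefully is that the bounds $p\le \nabla F\le q$ interact correctly with the boundary conditions to keep $\mc B$ invariant, which has already been absorbed into Lemma \ref{l01}(b). The real difficulty of Theorem \ref{t01}, namely the uniqueness of the fixed point, is deferred to the subsequent extension argument on $[-A,1+B]$ and is not needed for this corollary.
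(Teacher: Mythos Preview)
Your proposal is correct and follows essentially the same route as the paper: Schauder's fixed point theorem applied to $\mc K_\gamma$ on $\mc B$, with Lemma \ref{l01}(a)--(c) supplying continuity, invariance, and relative compactness of the image via Arzel\`a--Ascoli. The paper's proof is just a two-sentence compression of what you wrote; your version spells out the hypotheses (nonemptiness, convexity, closedness of $\mc B$) more carefully, which is fine.
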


\begin{proof}
By Schauder's fixed point theorem, it is enough to show that
$\mc{K}_\gamma (\mc{B})$ has a compact closure in $C^1([0,1])$. By
Ascoli--Arzela theorem, this property holds provided
$\nabla \mc K_\gamma (F)$ is Lipschitz continuous, uniformly for
$F\in\mc B$. This is the content of assertion (c) of the lemma.
\end{proof}

We turn to uniqueness.  The proof relies on an argument used to prove
uniqueness of solutions to equation \eqref{3-01} with Dirichlet
boundary conditions. First, inspired by \cite{DHS2021}, we turn the
pair $(\gamma, F)$ defined on the interval $[0,1]$, of solutions to
\eqref{3-03} [that is, with Robin boundary conditions] into a pair
$(\gamma_{\rm ext}, F_{\rm ext})$ defined on the interval $[-A,1+B]$,
of solutions to \eqref{3-04} with Dirichlet boundary conditions.

Fix $\gamma\in\ms M_{\rm ac}$, and let $F\in \mc F$ be a solution to
equation \eqref{3-03}. We extend $\gamma$ and $F$ to the interval
$[-A, 1+B]$ as follows. $F_{\rm ext}$ coincides with $F$ on $[0,1]$,
is linear in the complement and $F_{\rm ext}(-A) = \alpha$,
$F_{\rm ext}(1+B) = \beta$.  $\gamma_{\rm ext}$ coincides with
$\gamma$ on $[0,1]$ and is equal to $F_{\rm ext}$ in the
complement. Hence, $F_{\rm ext}$,
$\gamma_{\rm ext} : [-A, 1+B] \to \bb R$ are given by
\begin{equation*}
F_{\rm ext} (x) \;=\;
\begin{cases}
\alpha \,+\, \nabla F(0) \, [A + x\,] & \text{for $x\in [-A,0)$}\;, \\
F(x) & \text{for $x\in [0,1]$}\;, \\
\beta \,+\, \nabla F(1) \, [x - B - 1\,] & \text{for $x\in (1,1+B]$}\;, \\
\end{cases}
\end{equation*}
and
\begin{equation*}
\gamma_{\rm ext} (x) \;=\;
\begin{cases}
\gamma (x) & \text{for $x\in [0,1]$}\;, \\
F_{\rm ext}(x) & \text{otherwise}\;. \\
\end{cases}
\end{equation*}

Note that $F_{\rm ext}$ belongs to $C^1([-A, 1+B])$,
$F_{\rm ext}(-A)=\alpha$, $F_{\rm ext}(1+B)=\beta$. Moreover, since
$F = \mc K_\gamma(F)$ and $\mc K_\gamma(\mc F) \subset \mc B$, on the
interval $[0,1]$, $p \le \nabla F_{\rm ext}(x) \le q$. Hence,
$p\wedge \alpha \le \nabla F_{\rm ext}(x) \le q \vee \beta$, and
$F_{\rm ext}$ belongs to the set $\mc B_{\rm ext}$ defined by
\begin{gather*}
\mc B_{\rm bc, ext} \;:=\; \big\{ \, G \in C^1([-A,1+B]) :
G(-A) = \alpha \;,\; G(1+B) = \beta\, \big\} \;, \\
\mc B_{\rm ext} \;:=\; \big\{ \, G \in \mc B_{\rm bc, ext} :
p\wedge \alpha \le \nabla F(x) \le q \vee \beta
\;\;\forall\; x\in [-A,1+B] \, \big\} \;.
\end{gather*}

Fix $\varphi$ in $\ms M_{\rm ac}([-A, 1+B])$.  With Dirichlet boundary
conditions on the interval $[-A, 1+B]$, the problem \eqref{3-04}
becomes the integro-differential equation
\begin{equation}
\label{3-08}
G(x) \;=\; \alpha \;+\; (\beta-\alpha)\,
\frac{\int_{-A}^x
\exp \{ \int_{-A}^y \mc R_\varphi(G;z)\, dz \}\, dy}
{\int_{-A}^{1+B}  \exp \{ \int_{-A}^y \mc R_\varphi (G;z)\, dz \}
\, dy } \;,
\end{equation}
where $\mc R_\varphi(G;z)$ is given by \eqref{3-02}.

\begin{lemma}
\label{l03}
Fix $\gamma\in\ms M_{\rm ac}$, and let $F\in \mc F$ be a solution to
equation \eqref{3-03}.  Then, $F_{\rm ext}$ is a solution to
\eqref{3-08} for $\varphi = \gamma_{\rm ext}$.
\end{lemma}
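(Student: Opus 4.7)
The plan is a direct piecewise verification. The key observation is that because $\gamma_{\rm ext}$ and $F_{\rm ext}$ agree on the extension intervals $[-A,0]$ and $[1,1+B]$, the integrand $\mc R_{\gamma_{\rm ext}}(F_{\rm ext};\,\cdot\,)$ vanishes there. Concretely, since $\mc R_\varphi(G;z) = [\varphi(z)-G(z)]\,\nabla G(z)/\{G(z)[1-G(z)]\}$, we get $\mc R_{\gamma_{\rm ext}}(F_{\rm ext};z)=0$ for $z\in[-A,0]\cup[1,1+B]$, and $\mc R_{\gamma_{\rm ext}}(F_{\rm ext};z)=\mc R_\gamma(F;z)$ for $z\in[0,1]$. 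This collapses the outer integrals to trivial pieces: for $y\in[-A,0]$ the inner integral is $0$; for $y\in[0,1]$ it equals $\int_0^y \mc R_\gamma(F;z)\,dz$; and for $y\in[1,1+B]$ it equals the constant $E:=\int_0^1 \mc R_\gamma(F;z)\,dz$.

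Next I would compute the denominator of \eqref{3-08}. Splitting $\int_{-A}^{1+B}$ into the three pieces gives
\begin{equation*}
\int_{-A}^{1+B}\!\exp\Big\{\!\int_{-A}^y\!\!\mc R_{\gamma_{\rm ext}}(F_{\rm ext};z)\,dz\Big\}dy \;=\; A \;+\; \int_0^1\!\exp\Big\{\!\int_0^y\!\!\mc R_\gamma(F;z)\,dz\Big\}dy \;+\; B\,e^{E},
\end{equation*}
which is exactly the denominator appearing in \eqref{3-03}. Call this common quantity $D$, and let $N(x)$ denote the numerator integral $\int_{-A}^x \exp\{\int_{-A}^y \mc R_{\gamma_{\rm ext}}(F_{\rm ext};z)\,dz\}\,dy$.

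Finally, I would check $F_{\rm ext}(x)=\alpha+(\beta-\alpha)N(x)/D$ on each of the three intervals. On $[0,1]$, direct substitution gives $N(x)=A+\int_0^x\exp\{\cdots\}\,dy$, so the identity is precisely \eqref{3-03}, which $F$ satisfies by hypothesis. On $[-A,0]$, $N(x)=A+x$, so the right-hand side is affine in $x$; the left-hand side is affine by construction, and both sides equal $\alpha$ at $x=-A$, so it suffices to match slopes, which gives $\nabla F_{\rm ext}(0^-)=(\beta-\alpha)/D$. But from the $[0,1]$ case evaluated at $x=0$ we have $F(0)-\alpha=(\beta-\alpha)A/D$, and the Robin condition $\nabla F(0)=A^{-1}[F(0)-\alpha]=(\beta-\alpha)/D$ closes the argument since $\nabla F_{\rm ext}(0^-)=\nabla F(0)$ by definition of the extension. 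The interval $[1,1+B]$ is analogous: $N(x)$ is affine with slope $e^{E}$, so the right-hand side is affine, the left-hand side is affine, both sides equal $\beta$ at $x=1+B$ (using $N(1+B)=D$), and matching slopes reduces to $\nabla F(1)=(\beta-\alpha)e^E/D=B^{-1}[\beta-F(1)]$, which is the other Robin condition satisfied by $F$.

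There is no real obstacle; the content of the lemma is that the Robin problem on $[0,1]$ has been designed precisely so that the linear extension matches the would-be solution of the Dirichlet problem on $[-A,1+B]$ at the gluing points in both value and derivative. The only care needed is to verify that the two slope identities at $x=0$ and $x=1$ used above coincide with the boundary conditions satisfied by $F$, which follows directly from $F=\mc K_\gamma(F)$ and the definition of $\mc K_\gamma$ in \eqref{Kg}.
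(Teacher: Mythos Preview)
Your proof is correct and takes essentially the same approach as the paper's. The paper's proof is a one-paragraph sketch that records only the key observation you also start from---that $\mc R_{\gamma_{\rm ext}}(F_{\rm ext};\cdot)$ vanishes on $[-A,0]\cup[1,1+B]$ because $\gamma_{\rm ext}=F_{\rm ext}$ there, while on $[0,1]$ the equation reduces to the one $F$ already satisfies---and then leaves the ``straightforward computation'' to the reader; your write-up is precisely that computation carried out in full, including the slope-matching at $x=0$ and $x=1$ via the Robin conditions.
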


\begin{proof}
The assertion follows from a straightforward computation. The result
holds for the following reason. On the interval $[0,1]$, the identity
\eqref{3-04} is in force because $F$ is a solution to \eqref{3-03}.
On the other hand, On the complement, \eqref{3-04} holds because both
sides of the identity \eqref{3-04} vanish. The left-hand side because
$F_{\rm ext}$ is linear on $[0,1]^c$, and the right-hand side because
$\gamma_{\rm ext} = F_{\rm ext}$.
\end{proof}

\begin{proof}[Proof of Theorem \ref{t01}]
Fix $\gamma\in\ms M_{\rm ac}$.  Existence has been proven in Corollary
\ref{l02}. To prove uniqueness, consider two solutions $F^{(1)}$,
$F^{(2)}$, and recall the definition of $\gamma^{(j)}_{\rm ext}$,
$F^{(j)}_{\rm ext}$, $j=1$, $2$.

By Lemma \ref{l03}, $F^{(1)}_{\rm ext}$, $F^{(2)}_{\rm ext}$ are
solutions to \eqref{3-08}, with $\varphi = \gamma^{(1)}_{\rm ext}$,
$\gamma^{(2)}_{\rm ext}$, respectively. Therefore, since these functions solve
\eqref{3-04} almost everywhere,
\begin{equation}
\label{3-05b}
(\nabla F^{(j)}_{\rm ext}) (x) \; =\; (\nabla F^{(j)}_{\rm ext}) (-A)
\; +\; \int_{-A}^x 
(\nabla F^{(j)}_{\rm ext}) (y) \;
\mc R_\gamma (F^{(j)}_{\rm ext};y)\; dy
\end{equation}
for $j=1$, $2$ and all $x$ in $[-A,1+B]$.

Assume that
$(\nabla F^{(1)}_{\rm ext}) (-A) = (\nabla F^{(2)}_{\rm ext})
(-A)$. In this case, by definition of $\gamma^{(j)}_{\rm ext}$,
$\gamma^{(1)}_{\rm ext} (x) = \gamma^{(2)}_{\rm ext} (x)$ for all
$x\in [-A,0)$. Since this identity always holds for $x\in [0,1]$,
$\gamma^{(1)}_{\rm ext} (x) = \gamma^{(2)}_{\rm ext} (x)$ for all
$x\in [-A,1]$.  By \eqref{3-05b}, elementary bounds and Gronwall's
inequality,
$(\nabla F^{(1)}_{\rm ext})(x) = (\nabla F^{(2)}_{\rm ext})(x)$ for
all $x\in [-A,1]$ so that
$F^{(1)}_{\rm ext} (x) = F^{(2)}_{\rm ext}(x)$ for all $x$ in this
interval due to the boundary condition satisfied by
$F^{(1)}_{\rm ext}$, $F^{(2)}_{\rm ext}$ at $-A$.

Since
$(\nabla F^{(1)}_{\rm ext}) (1) = (\nabla F^{(2)}_{\rm ext}) (1)$,
$\gamma^{(1)}_{\rm ext} (x) = \gamma^{(2)}_{\rm ext} (x)$ also for
$x\in [1,1+B]$. The same Gronwall's argument permits to extends the
identity $F^{(1)}_{\rm ext} (x) = F^{(2)}_{\rm ext} (x)$ to
$x\in [1, 1+B]$. This concludes the argument in the case
$(\nabla F^{(1)}_{\rm ext}) (-A) = (\nabla F^{(2)}_{\rm ext}) (-A)$.

Assume, by contradiction, that
$(\nabla F^{(1)}_{\rm ext}) (-A) < (\nabla F^{(2)}_{\rm
ext})(-A)$. The argument in this case relies on the following
identity.  As $F^{(1)}_{\rm ext}$ is strictly increasing, \eqref{3-04}
yields that
\begin{equation*}
\nabla\,  \frac {F^{(j)}_{\rm ext}\, [1-F^{(j)}_{\rm ext}]}
{\nabla  F^{(j)}_{\rm ext}}  \; =\; 1 - F^{(j)}_{\rm ext}
- \gamma^{(j)}_{\rm ext}
\end{equation*}
holds almost everywhere. Hence, as $F^{(j)}_{\rm ext}(-A) = \alpha$,
\begin{equation}
\label{3-06}
\frac{ F^{(j)}_{\rm ext}(x) \, [ 1-F^{(j)}_{\rm ext}(x)] }
{(\nabla F^{(j)}_{\rm ext}) (x)}
\;=\; \frac{ \alpha \, [ 1-\alpha] }
{(\nabla F^{(j)}_{\rm ext}) (-A)}
\;+\; \int_{-A}^x [\, 1-F^{(j)}_{\rm ext}(y) -\gamma^{(j)}_{\rm ext}(y) \,]\;dy
\end{equation}
for all $x$ in $[-A,1+B]$.

Let
$x_0 := \inf\{ y\in (-A,1+B] : F^{(j)}_{\rm ext}(y) = F^{(2)}_{\rm
ext}(y)\}$. The point $x_0$ belongs to $(-A,1+B]$ because
$F^{(1)}_{\rm ext}(-A)= F^{(2)}_{\rm ext} (-A)$,
$(\nabla F^{(1)}_{\rm ext}) (-A) < (\nabla F^{(2)}_{\rm ext}) (-A)$
and $F^{(1)}_{\rm ext}(1+B)= F^{(2)}_{\rm ext} (1+B)$.  Actually, it
can not belong to $[-A,0]$ because the functions $F^{(j)}_{\rm ext}$
are linear in this interval.

Suppose that $x_0 \in (0, 1]$. By definition of $x_0$,
$F^{(1)}_{\rm ext}(x) < F^{(2)}_{\rm ext}(x)$ for all $x\in (-A,x_0)$.
Thus, $\gamma^{(1)}_{\rm ext}(x) \le \gamma^{(2)}_{\rm ext}(x)$ for
all $x$ in this interval. On the other hand,
$F^{(1)}_{\rm ext}(x_0)=F^{(2)}_{\rm ext}(x_0)$ and
$(\nabla F^{(1)}_{\rm ext}) (x_0)\geq (\nabla F^{(1)}_{\rm ext}) (x_0)$.
Therefore, by \eqref{3-06},
\begin{equation*}
\frac{ F^{(1)}_{\rm ext}(x_0) [ 1-F^{(1)}_{\rm ext}(x_0)] }
{(\nabla F^{(1)}_{\rm ext}) (x_0)} \;>\;
\frac{ F^{(2)}_{\rm ext} (x_0)
[ 1-F^{(2)}_{\rm ext}(x_0)] } {(\nabla F^{(2)}_{\rm ext}) (x_0)}
\end{equation*}
or, equivalently,
$(\nabla F^{(1)}_{\rm ext})(x_0) < (\nabla F^{(2)}_{\rm ext}) (x_0)$,
which is a contradiction.

We turn to the case where $x_0 \in (1,1+B]$. By definition of $x_0$,
$F^{(1)}_{\rm ext}(x) < F^{(2)}_{\rm ext}(x)$ for all $x\in (-A,x_0)$.
Since the functions $F^{(j)}_{\rm ext}$ are linear in $[1,1+B]$, this
entails that $x_0 =1+B$ and that
$\gamma^{(1)}_{\rm ext}(x) \le \gamma^{(2)}_{\rm ext}(x)$ for all
$x\in [-A, 1+B]$. We may repeat the argument of the previous paragraph
to conclude that
$(\nabla F^{(1)}_{\rm ext}) (1+B) < (\nabla F^{(2)}_{\rm ext}) (1+B)$,
which is a contradiction. This completes the proof of the theorem.
\end{proof}

\begin{proposition}
\label{p01} 
For each $\gamma\in\ms M_{\rm ac}$, denote by $F= F(\gamma)$ the unique
solution in $\mc F$ of \eqref{3-03}. Then,

\begin{itemize}
\item[{(i)}] If $\gamma\in C([0,1])$, then $F(\gamma)\in C^2([0,1])$
and it is the unique solution in $\mc F \cap C^2([0,1])$ of
\eqref{3-03};

\item[{(ii)}] If $\gamma_n$ converges to $\gamma$ in $\ms M_{\rm ac}$ as
$n\to\infty$, then $F_n=F(\gamma_n)$ converges to $F=F(\gamma)$ in
$C^1([0,1])$;
\end{itemize}
\end{proposition}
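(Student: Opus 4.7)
The plan is to prove (i) by a direct bootstrapping from the integro-differential equation, and (ii) by an Arzelà--Ascoli compactness argument combined with the uniqueness provided by Theorem \ref{t01}.

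For (i), Theorem \ref{t01} produces $F = F(\gamma) \in \mc F$. Because $F \in C^1([0,1])$ with $\nabla F > 0$ and $\alpha < F < \beta$, and $\gamma$ is continuous, the integrand $\mc R_\gamma(F;z) = [\gamma(z)-F(z)]\nabla F(z)/[F(z)(1-F(z))]$ is continuous on $[0,1]$. Consequently the right-hand side of \eqref{3-03} is twice continuously differentiable in $x$, so $F \in C^2([0,1])$. Uniqueness in $\mc F \cap C^2([0,1])$ is then immediate from uniqueness in $\mc F$ given by Theorem \ref{t01}.

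For (ii), Lemma \ref{l01}(b) gives $F_n = \mc K_{\gamma_n}(F_n) \in \mc B$, so $F_n$ is uniformly bounded in $C^1$ with $p \le \nabla F_n \le q$ and $\alpha + Ap \le F_n \le \beta - Bp$. Inspection of the proof of Lemma \ref{l01}(c) shows that the Lipschitz constant $C_0$ for $\nabla \mc K_{\gamma}(F)$ depends only on $\alpha,\beta,A,B$ and the universal bounds on $\mc R_\gamma$ that do not involve the particular $\gamma\in\ms M_{\rm ac}$. By Arzelà--Ascoli the sequence $\{F_n\}$ is precompact in $C^1([0,1])$; let $F^*$ be a limit along some subsequence $F_{n_k} \to F^*$, and observe that $F^* \in \mc B \subset \mc F$ because all the pointwise bounds defining $\mc B$ pass to the $C^1$ limit.

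The crucial step is to identify $F^*$ as $\mc K_\gamma(F^*)$, after which Theorem \ref{t01} forces $F^* = F(\gamma)$ and, since every subsequential limit equals the same function, the full sequence converges. Set $g_n(z) := \nabla F_n(z)/[F_n(z)(1-F_n(z))]$ and $g(z) := \nabla F^*(z)/[F^*(z)(1-F^*(z))]$. Since $F^*$ is bounded away from $0$ and $1$ and $F_{n_k} \to F^*$ in $C^1$, we have $g_{n_k} \to g$ uniformly on $[0,1]$. Writing
\begin{equation*}
\int_{0}^{y} \mc R_{\gamma_{n_k}}(F_{n_k};z)\, dz
\;=\; \int_{0}^{y} \gamma_{n_k}(z)\, g_{n_k}(z)\, dz
\;-\; \int_{0}^{y} F_{n_k}(z)\, g_{n_k}(z)\, dz,
\end{equation*}
the second integral converges to $\int_0^y F^* g\, dz$ uniformly in $y$ by $C^1$-convergence. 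For the first, decompose as $\int_0^y \gamma_{n_k}(g_{n_k}-g)\,dz + \int_0^y \gamma_{n_k}\, g\, dz$; the first piece is $O(\|g_{n_k}-g\|_\infty)$ because $\|\gamma_{n_k}\|_{L^1}\le 1$, and the second converges to $\int_0^y \gamma\, g\, dz$ by weak convergence of $\gamma_n$ tested against the continuous function $g\mathbf 1_{[0,y]}$; this limit is uniform in $y$ because $\gamma$ is absolutely continuous, so $y\mapsto \int_0^y \gamma g\, dz$ is continuous and a Dini-type argument upgrades the pointwise convergence. Plugging these uniform limits into the definition \eqref{Kg} of $\mc K_{\gamma_{n_k}}(F_{n_k})$ yields uniform convergence to $\mc K_\gamma(F^*)$, and combined with $F_{n_k}=\mc K_{\gamma_{n_k}}(F_{n_k})$ this gives $F^* = \mc K_\gamma(F^*)$.

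The main obstacle is precisely this passage to the limit: one must simultaneously exploit the $C^1$-convergence of $F_n$ (to make $g_{n_k}$ converge uniformly and hence legitimate as a test function against the weakly converging $\gamma_n$) and the absolute continuity of the limit $\gamma$ (to obtain uniformity in $y$, which is needed because $\mc K_\gamma$ involves nested integrals in $y$). Once this is in place, uniqueness closes the argument.
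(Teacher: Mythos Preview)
Your approach is essentially the same as the paper's: part (i) via Remark~\ref{rm02} / direct bootstrapping, part (ii) via Lemma~\ref{l01} plus Arzel\`a--Ascoli to get $C^1$-compactness, then identification of any limit point as a fixed point of $\mc K_\gamma$ and an appeal to the uniqueness in Theorem~\ref{t01}. The paper simply asserts that $\mc K_{\gamma_{n_j}}(F_{n_j}) \to \mc K_\gamma(G)$; you fill in the details.

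Two small imprecisions in your filling-in, both easy to repair. First, $g\,\mathbf 1_{[0,y]}$ is not continuous at $y$, so weak convergence in $\ms M$ does not apply directly; you should instead use that the densities $\gamma_n$ are uniformly bounded by $1$, so the error incurred by replacing $\mathbf 1_{[0,y]}$ with a continuous approximation supported on $[0,y+\varepsilon]$ is $O(\varepsilon\|g\|_\infty)$ uniformly in $n$. Second, a Dini argument needs monotonicity, which you do not have. The clean fix is to note that $y\mapsto \int_0^y \gamma_n g\,dz$ is equi-Lipschitz with constant $\|g\|_\infty$ (again because $0\le\gamma_n\le 1$), so pointwise convergence on the compact $[0,1]$ already implies uniform convergence, with no appeal to Dini or to absolute continuity of $\gamma$.
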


\begin{proof}
Existence in assertion (i) follows from Theorem \ref{t01} and identity
(\ref{3-03}), which holds for all points $x$ in $[0,1]$ because
$\gamma$ is continuous. Uniqueness follows from Theorem \ref{t01}.
\smallskip

To prove (ii), let $\gamma_n$ be a sequence converging to $\gamma$ in
$\ms M_{\rm ac}$ and denote by $F_n=F(\gamma_n)$ the corresponding
solution to \eqref{3-03}. By Lemma \ref{l01}.(c) and Ascoli--Arzela
theorem, the sequence $F_n$ is relatively compact in
$C^1\big([0,1]\big)$. It remains to show uniqueness of its limit
points.  Consider a subsequence $n_j$ and assume that $F_{n_j}$
converges to $G$ in $C^1\big( [0,1]\big)$. Since $\gamma_{n_j}$
converges to $\gamma$ in $\ms M_{\rm ac}$ and $F_{n_j}$ converges to
$G$ in $C^1\big( [0,1]\big)$, by \eqref{Kg}
$\mc K_{\gamma_{n_j}} (F_{n_j})$ converges to $\mc K_{\gamma} (G)$. In
particular,
$G = \lim_j F_{n_j} = \lim_j \mc K_{\gamma_{n_j}} (F_{n_j}) = \mc
K_{\gamma} (G)$. Hence, by uniqueness of the solutions to
\eqref{3-03}, $G=F(\gamma)$.  This shows that $F(\gamma)$ is the
unique limit point of the sequence $F_n$, and concludes the proof of
(ii).
\end{proof}

Fix a trajectory $u(t,\cdot)$, and denote by $F(t,\cdot)$ the function
given by $F(t,x)= F(u(t,\cdot))(x)$. In the next lemma, we derive
smoothness properties of $F$ in terms of the ones of $u$.  To prove
this result, it is convenient to introduce a new variable. Let
$\color{bblue} \varphi_- := \log[ \alpha / (1-\alpha)]$,
$\color{bblue} \varphi_+ := \log[ \beta/(1-\beta)]$ and denote by
$\widetilde {\mc F}$ be the space of monotone $C^1$ functions given by
\begin{equation}
\label{3-18}
\widetilde{\mc {F}} :=\Big\{\, \varphi \in  C^1([0,1]) \,:\:
\varphi_- \, < \, \varphi (x)\, < \, \varphi_+ \;,\;
\varphi '(x) > 0 \;\; \forall\;  x\in [0,1] \, \Big\} \; .
\end{equation}
Denote by $\Phi : \mc F \to \widetilde{\mc {F}}$ the map given by
\begin{equation}
\label{3-17}
\Phi(F) \;=\; \log \frac {F}{1-F}\;\cdot
\end{equation}
Clearly, $\color{bblue}\Phi^{-1}(\varphi) = e^\varphi/[1+e^\varphi]$.
The advantage of working with $\varphi = \Phi(F)$ instead of $F$ lies
in the fact that, as a function of $\varphi$, the functional
$\ms G_{\rm bulk}$, defined above \eqref{2-03}, is concave. This
property plays a crucial role in the sequel.

In terms of the variable $\varphi$ the Euler-Lagrange equation
\eqref{3-01} becomes
\begin{equation}
\label{3-09}
\left\{
\begin{aligned}
& -\, \nabla\, \Big( \frac{1}{ \nabla \varphi}\Big)
\;+\; \frac{1}{ 1+ e^{\varphi}} \;=\; \gamma
\qquad \text{for}\;\; x\in (0,1) 
\\
& \nabla \varphi (0) \;=\; -\,  \frac{1}{A}\,
\big\{\, (1+ e^{-\varphi (0) })\, \alpha \,-\,
(1-\alpha) (1+e^{\varphi (0)})\,\big\}  \;;  
\\
& \vphantom{\Big(}
\nabla \varphi  (1) \;=\; \frac{1}{B}\,
\big\{\, (1 + e^{-\varphi (1)})\, \beta 
\,-\, (1-\beta) \, (1+e^{\varphi (1)}) 
\,\big\} \;.
\end{aligned}
\right.
\end{equation}

By Lemma \ref{l01}(b), there exists a constant
$C_1=C_1(\alpha, \beta, A, B)\in (0,\infty)$ such that
\begin{equation}
\label{3-10b}
\frac{1}{C_1} \;\le\;  (\nabla \varphi) (x) \;\le\; C_1
\qquad \text{for all} \;  x \in [0,1] \;,\,
\gamma\in\ms M_{\rm ac}\;.
\end{equation}

Fix $T>0$ and a trajectory $u(t,\cdot)$, $0\le t\le T$, in
$C^{1,0}([0,T]\times[0,1])$ such that $0\le u(t,x) \le 1$ for all
$(t,x)$. Denote by $F(t,x)$ the function given by
$\color{bblue} F(t,x)= F(u(t,\cdot))(x)$. By Proposition \ref{p01},
$F$ belongs to $C^{0,2}([0,T]\times[0,1])$.  Next result asserts that
$F\in C^{1,2}([0,T]\times[0,1])$. Let
\begin{equation*}
\varphi (t,x) \;:=\; \Phi(F(t,\cdot)) (x) \;=\;
\log \frac {F(t,x)}{1-F(t,x)}\;\;,
\qquad (t,x)\in [0,T]\times [0,1]\;.
\end{equation*}

As $F$ belongs to $C^{0,2}([0,T]\times[0,1])$ and solves \eqref{3-04},
an elementary computation yields that
$\varphi\in C^{0,2}\left([0,T]\times [0,1] \right)$. Moreover, for
each $t\in [0,T]$, $\varphi(t)$ is the unique strictly increasing
(w.r.t.\ $x$) solution to the problem \eqref{3-09} with
$\gamma = u(t)$. By \eqref{3-10b}, there exists a constant
$C_1=C_1(\alpha, \beta, A, B)\in (0,\infty)$ such that
\begin{equation}
\label{3-10}
\frac{1}{C_1} \;\le\;  (\nabla \varphi) (t,x) \;\le\; C_1
\qquad \forall \, (t,x) \in [0,T]\times[0,1] \;.
\end{equation}

\begin{lemma}
\label{l04}
Fix $u\in C^{1,0} ([0,T]\times [0,1])$ and let $\varphi$ be the
corresponding solution to \eqref{3-09}.  Then
$\varphi \in C^{1,2}\left([0,T]\times [0,1] \right)$ and for each
$0\le t<T$, $\psi := \partial_t \varphi $ is the unique classical
solution to the linear boundary value problem
\begin{equation}
\label{3-11}
\left\{
\begin{aligned}
& \nabla\Big[ 
\frac{ \nabla \psi }{ \big(  \nabla \varphi \big)^2}
\Big]
- \frac{e^{\varphi }}{ \big( 1+ e^{\varphi }\big)^2} \, \psi
= \partial_t u \qquad  x\in (0,1) \\
& \nabla \psi \, = \,  \frac{1}{A} \Big\{ (1-\alpha) \, e^\varphi
\,+\, \alpha\, e^{-\varphi}\,\Big\}\, \psi \qquad x=0 \\
& \nabla \psi \, = \, -\, \frac{1}{B} \Big\{ (1-\beta) \, e^\varphi
\,+\, \beta\, e^{-\varphi}\,\Big\}\, \psi \qquad x=1\;.
\end{aligned}
\right.
\end{equation}
\end{lemma}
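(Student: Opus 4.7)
The plan is to carry out three steps: establish that the linear Robin problem \eqref{3-11} admits a unique classical solution; show via a difference-quotient argument in $t$ that $\partial_t \varphi$ exists and coincides with this solution; and conclude the joint $C^{1,2}$ regularity of $\varphi$.

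For the first step, I would view \eqref{3-11} as a self-adjoint linear elliptic boundary value problem of the form $\nabla(a \nabla \psi) - b\psi = \partial_t u$, with $a = 1/(\nabla \varphi)^2$ and $b = e^\varphi/(1+e^\varphi)^2$. The uniform bounds \eqref{3-10} ensure that $a$ is continuous and bounded away from zero, while the Robin coefficients $A^{-1}[(1-\alpha)e^\varphi + \alpha e^{-\varphi}]$ and $B^{-1}[(1-\beta)e^\varphi + \beta e^{-\varphi}]$ are strictly positive. Testing the equation against $\psi$ and integrating by parts produces a coercive $H^1$ bilinear form (both boundary contributions carry the right sign and $b \ge 0$), so existence and uniqueness of a weak solution follow from Lax--Milgram, and standard elliptic regularity upgrades it to a classical $C^2$ solution since $\partial_t u \in C^0$ and the coefficients inherit smoothness from $\varphi \in C^{0,2}$.

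Next I would set $\psi_h(t,x) := [\varphi(t+h,x) - \varphi(t,x)]/h$ and derive the equation it satisfies by subtracting \eqref{3-09} at times $t$ and $t+h$ and dividing by $h$. Rewriting the differences as mean-value expressions, e.g.
\begin{equation*}
\frac{1}{\nabla \varphi(t+h)} - \frac{1}{\nabla\varphi(t)}
\;=\; -\,\frac{h\, \nabla \psi_h}{\nabla \varphi(t)\,\nabla \varphi(t+h)}\;,
\end{equation*}
and similarly for the $1/(1+e^\varphi)$ term and the exponential boundary expressions, yields a linear Robin problem for $\psi_h$ whose coefficients converge (as $h \to 0$) to those of \eqref{3-11}. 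The right-hand side $[u(t+h)-u(t)]/h$ converges uniformly in $x$ to $\partial_t u$ by the hypothesis $u \in C^{1,0}$, and Proposition \ref{p01}(ii) ensures $\varphi(t+h) \to \varphi(t)$ in $C^1([0,1])$ since $u(t+h) \to u(t)$ in $\ms M_{\rm ac}$. The Step~1 energy estimate, applied uniformly in small $h$, then produces a uniform $H^1$ bound on $\psi_h$, which elliptic regularity strengthens to a uniform Hölder $C^{1,\theta}$ bound. Ascoli--Arzel\`a together with the uniqueness from Step~1 identifies every subsequential limit as the unique solution $\psi$ of \eqref{3-11}, so $\partial_t \varphi$ exists and equals $\psi$.

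Finally, continuity of $(t,x) \mapsto \psi(t,x)$ in both variables follows from the continuous dependence of the solution of \eqref{3-11} on its coefficients and source, again via Proposition \ref{p01}(ii) and $u \in C^{1,0}$; combined with $\varphi \in C^{0,2}$ this yields $\varphi \in C^{1,2}([0,T]\times[0,1])$. The principal obstacle I expect is the bookkeeping in passing to the limit in the difference-quotient equation, since both the divergence-form coefficient $1/(\nabla \varphi)^2$ and the exponential Robin terms depend nonlinearly on $\varphi$ and must be linearized consistently; the essential lever making this go through cleanly is the strict positive lower bound in \eqref{3-10} on $\nabla \varphi$, which prevents any degeneration of the limiting elliptic operator.
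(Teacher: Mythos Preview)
Your proposal is correct and follows essentially the same strategy as the paper: derive the equation satisfied by the difference quotient $\psi_h$, obtain a uniform $H^1$ (hence $C^0$) bound via an energy estimate, and use uniqueness of the limiting linear Robin problem together with Ascoli--Arzel\`a to identify $\partial_t\varphi$. The only noteworthy technical difference is that the paper keeps the exact nonlinear differences $(e^{h\psi_h}-1)/h$ in the equation for $\psi_h$ and exploits the sign property of $\Upsilon(a)=a(e^a-1)\ge 0$ to control both the boundary and interior nonlinear terms in the energy estimate, whereas you linearize via the mean-value theorem; both routes yield the required coercivity, and your version is arguably more direct at the cost of slightly messier intermediate-point bookkeeping.
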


\begin{proof}
Fix $t\in [0,T]$. For $h\neq 0$ such that $t+h\in [0,T]$ let
$\psi_h(t,x) := h^{-1} \, [\varphi(t+h,x)-\varphi(t,x)]$. By Proposition
\ref{p01}, $\psi_h(t,\cdot)\in C^{2}\left([0,1]\right)$, and, by 
\eqref{3-09}, for $x \in (0,1)$, $\psi_h$ solves
\begin{equation}
\label{3-12}
\nabla\Big[\,
\frac{ \nabla \psi_h (t)}{ \nabla \varphi (t) \,  \nabla\varphi(t+h)} 
\, \Big]
\,-\, \frac{e^{\varphi(t)}}
{\big( 1+ e^{\varphi(t)}\big) \, \big( 1+ e^{\varphi(t+h)}\big)} 
\: \frac{e^{h\,\psi_h(t)}-1}{h} \;=\; u_h(t)
\end{equation}
where $u_h(t) = h^{-1}\, [u(t+h)-u(t)]$. At the boundary $x=0$,
\begin{equation*}
\nabla \psi_h \, = \, -\,
\frac{1}{A}\, \Big\{\, \alpha\, e^{- \varphi(t)}\,
\frac{e^{- h\,\psi_h(t)}-1}{h}  \;-\;
(1-\alpha) \, e^{\varphi(t)} \, \frac{e^{h\,\psi_h(t)}-1}{h}
\,\Big\} \;,
\end{equation*}
and at the boundary $x=1$,
\begin{equation*}
\nabla \psi_h \, = \, 
\frac{1}{B}\, \Big\{\, \beta\, e^{- \varphi(t)}\,
\frac{e^{- h\,\psi_h(t)}-1}{h}  \;-\;
(1-\beta) \, e^{\varphi(t)} \, \frac{e^{h\,\psi_h(t)}-1}{h}
\,\Big\} \;.
\end{equation*}

\smallskip\noindent{\it Claim 1: The sequence $\psi_h(t)$ is
relatively compact in $C([0,1])$.} \smallskip

To prove this claim, multiply equation \eqref{3-12} by $-\, \psi_h(t)$
and integrate by parts the first term.  Since
$\color{bblue} \Upsilon (a) := a\, (e^a-1)\ge 0$ for all $a\in\bb R$
and in view of the expression for $\nabla \psi_h$ at the boundary, the
boudary terms resulting from the integration by parts are positive.
Therefore,
\begin{equation*}
\begin{aligned}
& \int_0^1 \frac{ \nabla \psi_h (t)^2}
{ \nabla \varphi (t) \,  \nabla\varphi(t+h)} \, dx
\;+\;
\int_0^1 \frac{e^{\varphi(t)}}
{\big( 1+ e^{\varphi(t)}\big) \, \big( 1+ e^{\varphi(t+h)}\big)} 
\, \psi_h (t)\,  \frac{e^{h\,\psi_h(t)}-1}{h} \;dx \\
&\qquad \;\le \; -\, \int_0^1  \psi_h (t)\, u_h(t) \;dx\;.
\end{aligned}
\end{equation*}
As $\Upsilon (a) \ge 0$ for all $a\in\bb R$, by \eqref{3-10}, there
exists a finite constant $C_0$, which depends only on the parameters,
such that
\begin{equation}
\label{3-14}
\int_0^1 \nabla \psi_h (t)^2 \, dx
\;+\; \frac{1}{h^2}\,
\int_0^1 \Upsilon \big(\, h\, \psi_h (t)\,\big) \;dx 
\;\le \; C_0\, \Big|\,  \int_0^1  \psi_h (t)\, u_h(t) \;dx
\,\Big| \;.
\end{equation}

On the right-hand side, adding and subtracting
$\int_0^1 \psi_h (t)\, dx \int_0^1 u_h(t) \, dx$ inside the absolute
value, we estimate this term by
\begin{equation}
\label{3-13}
C_0 \, \Vert u_h(t) \Vert_\infty \, \Big\{\, 
\int_0^1  |\, \nabla \psi_h (t)\,|   \;dx
\;+\; \int_0^1  \big|\, \psi_h (t) \, \big| \;dx\,\Big\}
\end{equation}
for some finite constant $C_0$ which depends only on the parameters
and may change from line to line. We estimate each term separately. By
Young's inequality $2ab \le A a^2 + A^{-1} b^2$, $A>0$, the first one
is bounded by
\begin{equation*}
\frac{1}{2}\, \int_0^1  |\, \nabla \psi_h (t)\,|^2   \;dx \;+\;
C_0 \, \Vert u_h(t) \Vert_\infty^2\;.
\end{equation*}

To bound the second integral in \eqref{3-13}, let
$C_1 = C_0 \, \Vert u_h(t) \Vert_\infty$, and $\delta>0$ be such that
$\Upsilon(a) \ge \delta a^2$ for $|a|\le 1$, and
$\Upsilon(a) \ge \delta\, |a|$ for $|a|\ge 1$. Rewrite the second
integral as
\begin{equation*}
\frac{C_1}{|h|}\, \Big\{ \int_0^1  \big|\, h \psi_h (t) \, \big| \,
\chi_{|\, h\, \psi_h (t) \,| \le 1} \;dx \;+\;
\int_0^1  \big|\, h \psi_h (t) \, \big| \,
\chi_{|\, h\, \psi_h (t) \,| \ge 1} \;dx \, \Big\}\;,
\end{equation*}
where $\color{bblue} \chi_{\ms A}$ stands for the indicator of the set
$\ms A$. By Young's inequality and the definition of $\delta$, the
previous expression is bounded by
\begin{equation*}
\frac{C_1}{|h|}\, \Big\{ \frac{A}{2}\;+\;
\frac{1}{2A} \int_0^1  \big|\, h \psi_h (t) \,
\big|^2  \, \chi_{|\, h\, \psi_h (t) \,| \le 1} \;dx \;+\; \frac{1}{\delta}
\int_0^1  \Upsilon (h \psi_h (t))   \;dx \, \Big\}
\end{equation*}
for all $A>0$. By definition of $\delta$ and choosing
$A= |h| C_1 /\delta$, the previous expression is less than or equal to
\begin{equation*}
\frac{C_1^2}{2\delta} \;+\; \frac{1}{h^2}
\Big\{ \frac{1}{2} \,+\, \frac{C_1 \, |h|}{\delta}\,\Big\}
\int_0^1  \Upsilon (h \psi_h (t))   \;dx \;.
\end{equation*}
Therefore, \eqref{3-13} is bounded above by
\begin{equation*}
\frac{1}{2}\, \int_0^1  |\, \nabla \psi_h (t)\,|^2   \;dx \;+\;
C_0 \, \Vert u_h(t) \Vert_\infty^2 \;+\;
\frac{C_1^2}{2\delta} \;+\; \frac{1}{h^2}
\Big\{ \frac{1}{2} \,+\, \frac{C_1 \, |h|}{\delta}\,\Big\}
\int_0^1  \Upsilon (h \psi_h (t))   \,dx \,,
\end{equation*}
where $C_1 = C_0 \, \Vert u_h(t) \Vert_\infty$.

Reporting this estimate in \eqref{3-14} yields that
\begin{equation}
\label{3-16}
\frac{1}{2} \int_0^1 \nabla \psi_h (t)^2 \, dx
\;+\; \frac{1}{4 h^2}\,
\int_0^1 \Upsilon \big(\, h\, \psi_h (t)\,\big) \;dx 
\;\le \; C_0 \,  \Big( \, 1 \,+\, \frac{1}{2\delta}\, \Big)
\, \Vert u_h(t) \Vert_\infty^2
\end{equation}
for $|h|\le \delta/4C_1$.

This shows that the sequence $\psi_h(t)$ is uniformly Lispchitz
continuous, and thus relatively compact in $C([0,1])$, proving the
assertion. \smallskip

\smallskip\noindent{\it Claim 2: The sequence $\psi_h(t)$ converges in
$C([0,1])$ to the unique classical solution to \eqref{3-11}.}
\smallskip

Recall from Appendix \ref{sec05} the definition of the Sobolev space
$\mc H^1([0,1])$ and of the associated norm.  Fix a subsequence
$(\psi_{h(k)} : k\ge 1)$, still denoted by $\psi_{h}$, which converges
to a limit, represented by $\psi$. By \eqref{3-16}, $\psi$ belongs to
$\mc H^1([0,1])$ and $\nabla \psi_h (t)$ converges weakly in
$\ms L^2([0,1])$ to $\nabla \psi$.

Fix a function $v$ in $\mc H^1([0,1])$. Multiply both sides of
\eqref{3-12} by $v$ and integrate by parts to get that
\begin{equation*}
\begin{aligned}
& \mf a_h (1)\, v(1)\, \nabla \psi_h (t, 1) \;-\;
\mf a_h (0)\, v(0)\, \nabla \psi_h (t, 0) \\
& \quad - \, \int_0^1 \mf a_h\, \nabla v\, \nabla \psi_h (t)\, dx
\, + \, \int_0^1 \mf b_h\, v\, \frac{e^{h\,\psi_h(t)}-1}{h} \, dx
\;=\; \int_0^1 u_h(t) \, v\, dx\;,
\end{aligned}
\end{equation*}
where
$\mf a_h = [\, \nabla \varphi (t) \, \nabla\varphi(t+h)\,]^{-1}$,
$\mf b_h \,=\, -\, e^{\varphi(t)}/ (\,1+ e^{\varphi(t)}\,) \, (\, 1+
e^{\varphi(t+h)}\,)$. Replace in this equation $\nabla \psi_h (t, 0)$,
$\nabla \psi_h (t, 1)$ by the expressions appearing in the equations
below \eqref{3-12}. As $\mf a_h$, $\mf b_h$, $\psi_h$ converge in
$C([0,1])$, and since $\nabla \psi_h$ converges weakly to
$\nabla \psi$ in $\ms L^2([0,1])$, passing to the limit in the
previous equation yields that
\begin{equation*}
\begin{aligned}
& \mf a (1)\, v(1)\, \mf c_1\, \psi (t, 1) \;-\;
\mf a (0)\, v(0)\, \mf c_0\, \psi (t, 0)  \\
& \quad - \, \int_0^1 \mf a\, \nabla v\, \nabla \psi (t)\, dx
\, + \, \int_0^1 \mf b\, v\, \psi \, dx
\;=\; \int_0^1 (\partial_t u)(t) \, v\, dx\;,
\end{aligned}
\end{equation*}
where
$\mf c_1 \,=\, - \, B^{-1} \{ (1-\beta) \, e^{\varphi(t,1)} \,+\,
\beta\, e^{-\varphi(t,1)}\,\}$,
$\mf c_0 = A^{-1} \{ (1-\alpha) \, e^{\varphi(t,0)} \,+\, \alpha\,
e^{-\varphi (t,0)}\,\}$. Hence, according to \cite[IV, Section
1]{M83}, $\psi$ is a generalized solution to \eqref{3-11}. By
\cite[Theorem IV.1.2]{M83}, the generalized solution is unique, which
proves that $\psi_h$ converges in $C([0,1])$ to the unique generalized
solution to \eqref{3-11}. As $\partial_t u(t,\cdot) \in C([0,1])$, by
\cite[Theorem IV.2.1]{M83}, the generalized solution belongs to
$C^2([0,1])$ and is a classical solution to \eqref{3-11}. This proves
the claim. \smallskip

It remains to prove the continuity $t\mapsto \psi(t,\cdot)$. According
to \cite[Theorem IV.1.2]{M83}, there exists a constant $C_0$,
independent of $\partial_t u$, such that
$\Vert \psi (t) \Vert_{\mc H^1 ([0,1])} \le C_0\, \Vert (\partial_t
u)(t) \Vert_{ \ms L^2 ([0,1])}$. Since there exist a finite constant $C_0$
such that
$\Vert v \Vert_\infty \le C_0 \Vert v \Vert_{\mc H^1 ([0,1])}$ for all
$v \in \mc H^1 ([0,1])$,
\begin{equation*}
\Vert \, \psi (t+h)  - \psi (t) \,  \Vert_\infty \;\le\;
C_0 \, \big\Vert\,  (\partial_t u)(t+h) \,-\,
(\partial_t u)(t)  \, \big\Vert_{\ms L^2 ([0,1])}\;.
\end{equation*}
This proves that $\psi$ belongs to $C^{0,2}([0,T] \times [0, 1])$, and
therefore that $\varphi$ belongs to $C^{1,2}([0,T] \times [0, 1])$, as
claimed. 
\end{proof}

\begin{proof}[Proof of Theorem \ref{t02}]
For each $F\in \mc F$, $\ms G (\cdot, F)$ is a convex, lower
semi-continuous functional on $\ms M_{\rm ac}$.  The functional
$S_0(\cdot)$ inherits these properties. By choosing $F=\bar\rho$ in
\eqref{2-03} we obtain that for every $\gamma\in \ms M_{\rm ac}$,  
\begin{equation*}
\ms G (\gamma, \bar\rho) \;=\; S_{\rm eq} (\gamma) \;-\;
(1+A+B) \, \log (1+A+B)\;,
\end{equation*}
where $S_{\rm eq}\colon \ms M_{\rm ac} \to \bb R$ is the convex and
nonnegative functional 
\begin{equation*}
S_{\rm eq}(\gamma) \;=\; \int_{0}^1  \Big\{\, \gamma (x) \, \log
\frac {\gamma (x)}{\bar\rho (x)} + \big[1 - \gamma (x)\big]
\log \frac {1- \gamma (x)}{1- \bar\rho(x)} \,
\Big\} \; dx\;.
\end{equation*}
As $S_{\rm eq}$ is non-negative,
$S_0(\gamma) \, \ge \, -\, (1+A+B) \, \log (1+A+B)$. On the other hand,
as $a\mapsto \log a$ is concave, by Jensen's inequality and since
$\alpha \le F(x) \le \beta$, for every $F\in\mc F$,
$\gamma \in \ms M_{\rm ac}$,
\begin{equation*}
\ms G_{\rm bulk} (\gamma, F) \;\le\; \log \frac{1}{\alpha} \;+\;
\log \frac{1}{1-\beta} \;\cdot
\end{equation*}
Hence, there exists a finite constant $C_0 = C_0(\alpha, \beta, A, B)$
such that $S_0(\gamma) \, \le\, C_0$ for all
$\gamma \in \ms M_{\rm ac}$. This proves the first assertion of the
theorem. We turn to the second.

Recall from \eqref{3-17} the definition of
$\varphi =\Phi (F) \in \widetilde{\mc F}$, and that
$F= \Phi^{-1} (\varphi) = e^{\varphi }/[1+e^{\varphi}]$. Set
$\widetilde{\ms G}_{\rm bulk} (\gamma,\varphi) = \ms G_{\rm bulk}
(\gamma,\Phi^{-1}(\varphi))$ so that
\begin{equation}
\label{3-19}
\begin{aligned}
\widetilde{ \ms G}_{\rm bulk}  (\gamma ,\varphi) \;=\;
\int_{0}^1 \Big\{ \mf h (\gamma) \,+\, (1-\gamma) \, \varphi 
\,-\, \log \big[\, 1+e^{\varphi }  \,\big] \, \, 
+\; \log\frac{\varphi'}{\beta-\alpha} \Big\} \;dx\;, 
\end{aligned}
\end{equation}
where $\mf h (a) = a \log a \,+\, (1-a) \log (1-a)$.

To prove the second assertion of the theorem, we have to show that for
each $\gamma\in\ms M_{\rm ac}$, the supremum over the set
$\widetilde{\mc F}$ of
\begin{equation*}
\widetilde{ \ms G}  (\gamma ,\varphi) \;:=\;
\widetilde{ \ms G}_{\rm bulk}  (\gamma ,\varphi) 
\;+\; A\,
\ln \frac{F(0) - \alpha}{A(\beta-\alpha)}
\;+\; B\, \ln \frac{\beta - F(1)}{B (\beta-\alpha)}
\end{equation*}
is uniquely attained at
$\varphi= \Phi (F (\gamma))$, where, recall, $F(\gamma)$ represents
the unique solution to \eqref{3-03}. In the previous equation, $F(x)$
stands for $\exp\{\varphi(x)\}/ [1+\exp\{\varphi(x)\}]$, $x=0$, $1$.

Since the functions $a\mapsto \log a$, $a \mapsto -\log (1+e^{a})$,
$a \mapsto -\log \{\, [e^{a}/(1+ e^a)] -\alpha\}$,
$a \mapsto -\log \{\, \beta - [e^{a}/(1+ e^a)] \, \}$ are strictly
concave, the last two in the interval $(\varphi_-, \varphi_+)$ defined
above \eqref{3-18}, for each $\gamma\in\ms {M}_{\rm ac}$, the
functional ${\widetilde{\ms G }}(\gamma,\cdot)$ is strictly concave on
${\widetilde{\mc{F}}}$.  Moreover it is easy to show that
${\widetilde{\ms G }}(\gamma,\cdot)$ is Gateaux differentiable on
${\widetilde{\mc{F}}}$ with derivative given by
\begin{equation*}
\begin{aligned}
& \Big\langle  \frac{\delta {\widetilde{\ms G }}(\gamma,\varphi)}
{\delta \varphi} \,,\, g \Big\rangle
\; =\; \int_{0}^1 \Big\{ \, \frac{g'}{\varphi'} \,+\,
\Big[\, \frac{1}{1+e^{\varphi}} \,-\, \gamma \, \Big]
\, g\, \Big\}\; dx \\
&\qquad  +\; \frac{A \, g(0)}{(1-\alpha) (1+e^{\varphi(0)}) -
\alpha (1+e^{-\varphi(0)})} \;+\;
\frac{B \, g(1)}{(1-\beta) (1+e^{\varphi(1)}) -
\beta (1+e^{-\varphi(1)})} 
\end{aligned}
\end{equation*}
for all $g$ in $C^1([0,1])$.  By \eqref{3-09}, the right-hand side
vanishes for $\varphi = \Phi(F(\gamma))$.

By \cite[Proposition 1.5.4]{ET76} and since
${\widetilde{\ms G}} (\gamma, \,\cdot\,)$ is strictly concave, for any
$\psi \neq \varphi$ in ${\widetilde{\mc{F}}}$,
\begin{equation*}
{\widetilde{\ms G}}(\gamma,\psi) \;<\;
{\widetilde{\ms G}}(\gamma,\varphi)  \;+\;
\Big\langle \, \frac{\delta {\widetilde{\ms G}}(\gamma,\varphi)}
{\delta \varphi} \,,\, \psi-\varphi \,\Big\rangle \;.
\end{equation*}
Since
$\delta {\widetilde{\ms G}} (\gamma,\varphi) / {\delta \varphi} =0$
for $\varphi = \Phi(F(\gamma))$, the supremum on ${\widetilde{\mc{F}}}$
of ${\widetilde{\ms G}}(\gamma,\cdot)$ is uniquely attained when
$\varphi= \Phi (F (\gamma))$.
\end{proof}

\begin{remark}
\label{rm01}
Fix $\gamma\in\ms M_{\rm ac}$, and consider a sequence
$\gamma_n\in \ms M_{\rm ac}$ such that
\begin{itemize}
\item[(i)] For each $n\ge 1$, there exists $\delta_n>0$ such that
$0<\delta_n \le \gamma_n(x) \le 1-\delta_n$ for all 
$x\in [0,1]$;
\item[(ii)] $\gamma_n$ converges to $\gamma$ a.e.
\end{itemize}
Then, by the dominated convergence theorem and Proposition
\ref{p01}.(ii),
\begin{equation*}
\lim_{n\to\infty} S_0(\gamma_n) \;=\;
\lim_{n\to\infty} \ms G \big(\gamma_n,F(\gamma_n)\big) \;=\;
\ms G\big(\gamma,F(\gamma)\big) \;=\; S_0(\gamma)\;.
\end{equation*}

\end{remark}

\section{The quasi-potential}
\label{sec03}

Let $\delta_0>0$ be such that
$\color{bblue} \delta_0 \le \alpha < \beta \le 1- \delta_0$.  For
$\delta\in (0,\delta_0]$ and $T>0$, let
\begin{equation}
\label{4-01}
\begin{gathered}
\ms {M}_\delta \;:=\; \big\{ \gamma \in C^2 ([0,1])\, :\,
\delta \le \gamma(x) \le 1-\delta\, \big\}
\\
D_{T,\delta} \;:=\; \big\{\,  u\in C^{1,2} ( [0,T]\times[0,1])
\,:\: \delta \le u(t,x) \le 1-\delta \, \big \}\;.
\end{gathered}
\end{equation}
Unless otherwise stated, throughout this section, $T>0$ and
$0<\delta\le \delta_0$ are fixed.

\begin{lemma}
\label{l05} 
Fix $u$ in $D_{T,\delta}$ and denote by $F(t,x) = F(u(t,\cdot))\,(x)$
the solution to the boundary value problem \eqref{3-01} with $\gamma$
replaced by $u(t)$. Set
\begin{equation}
\label{4-02} 
\Gamma (t,x) \;=\; \log \frac{u(t,x)}{1- u(t,x)} \; -\; 
\log \frac{F(t,x)}{1-F(t,x)} \; \cdot
\end{equation}
Then, for each $T\geq0$,
\begin{equation}
\label{4-03} 
S_0\big(u(T)\big) - S_0\big(u(0)\big)  \;=\;
\int_0^T \langle \, \partial_t u(t) \,,\,
\Gamma(t) \, \rangle \; dt  \; .
\end{equation}
\end{lemma}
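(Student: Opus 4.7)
The strategy is the rigorous version of the heuristic ``envelope'' argument from Section \ref{sec06}: since $F(t) = F(u(t))$ maximizes $\ms G(u(t), \cdot)$ over $\mc F$, the derivative of $t \mapsto \ms G(u(t), F(t))$ reduces to the partial derivative in the first argument, and the latter equals $\langle \partial_t u, \Gamma\rangle$. The plan is to implement this via a direct time differentiation and integration by parts, relying on Theorem \ref{t02} and Lemma \ref{l04} for the needed regularity.

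First, by Theorem \ref{t02}, $S_0(u(t)) = \ms G(u(t), F(t))$ for every $t \in [0,T]$. Since $u \in D_{T,\delta}$ lies in $C^{1,2}$, Lemma \ref{l04} gives that $\varphi(t,x) = \log[F(t,x)/(1-F(t,x))] \in C^{1,2}([0,T]\times[0,1])$; combined with the uniform bound $\alpha + Ap \le F \le \beta - Bp$ from Lemma \ref{l01}(b), this implies $F \in C^{1,2}([0,T]\times[0,1])$ and $\nabla F$ is bounded below by $p>0$. Thus we may differentiate $\ms G(u(t), F(t))$ in $t$ under the integral sign and compute
\begin{equation*}
\frac{d}{dt} \ms G(u(t), F(t)) \;=\; \mc A(t) \;+\; \mc B(t)\;,
\end{equation*}
where $\mc A(t)$ collects the terms carrying $\partial_t u$ and $\mc B(t)$ collects the terms carrying $\partial_t F$ and $\nabla \partial_t F$.

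Next I would show $\mc A(t) = \langle \partial_t u(t), \Gamma(t)\rangle$ by a direct calculation: differentiating the $\gamma$-dependent parts of $\ms G_{\rm bulk}$ yields
\begin{equation*}
\mc A(t) \;=\; \int_0^1 \partial_t u\, \Big\{ \log \tfrac{u}{F} - \log \tfrac{1-u}{1-F} \Big\}\, dx \;=\; \int_0^1 \partial_t u\, \Gamma\, dx \;,
\end{equation*}
(the boundary terms of $\ms G$ do not depend on $\gamma$). The main remaining task, and the only nontrivial one, is to show that $\mc B(t) \equiv 0$. After integrating by parts the term $\int_0^1 (\nabla \partial_t F)/\nabla F\, dx$, its bulk contribution becomes $\int_0^1 \partial_t F\, \Delta F / (\nabla F)^2\, dx$, and substituting the interior PDE of \eqref{3-01}, $\Delta F = (u - F)(\nabla F)^2/[F(1-F)]$, produces a density $\partial_t F \cdot [u/F - (1-u)/(1-F)]$ which is precisely the opposite of the bulk contribution coming from differentiating $-u\log F - (1-u)\log(1-F)$ in $F$. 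The bulk parts of $\mc B(t)$ thus cancel.

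It remains to cancel the boundary parts of $\mc B(t)$. The integration by parts produces $[\partial_t F/\nabla F]_0^1$, and using the Robin boundary conditions $\nabla F(t,0) = [F(t,0)-\alpha]/A$ and $\nabla F(t,1) = [\beta - F(t,1)]/B$ from \eqref{3-01}, this rewrites as
\begin{equation*}
\frac{B\, \partial_t F(t,1)}{\beta - F(t,1)} \;-\; \frac{A\, \partial_t F(t,0)}{F(t,0) - \alpha}\;,
\end{equation*}
which is exactly the opposite of the contribution obtained by differentiating the boundary terms $A\log[F(0)-\alpha] - B\log[\beta - F(1)]$ of $\ms G$ in $F$. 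Hence $\mc B(t) = 0$ for all $t \in [0,T]$. The identity \eqref{4-03} then follows by integrating $\tfrac{d}{dt}\ms G(u(t), F(t)) = \langle \partial_t u(t), \Gamma(t)\rangle$ from $0$ to $T$. The main potential obstacle is making the $C^{1,2}$ regularity of $F$ uniform enough to justify the differentiation under the integral and the boundary evaluation of $\partial_t F/\nabla F$; this is supplied by Lemma \ref{l04} together with the uniform lower bound $\nabla F \ge p$.
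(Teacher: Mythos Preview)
Your proposal is correct and follows essentially the same route as the paper: differentiate $S_0(u(t))=\ms G(u(t),F(t))$ using the regularity from Lemma \ref{l04}, split into $\partial_t u$-terms and $\partial_t F$-terms, integrate by parts the term $\langle 1/\nabla F,\partial_t\nabla F\rangle$, and use that $F$ solves \eqref{3-01} together with the Robin boundary conditions to see the $\partial_t F$-contribution vanishes. One small slip: the boundary part of $\ms G$ is $A\log[F(0)-\alpha]+B\log[\beta-F(1)]$ (plus constants), not with a minus sign; its $t$-derivative is $A\,\partial_tF(t,0)/[F(t,0)-\alpha]-B\,\partial_tF(t,1)/[\beta-F(t,1)]$, which is indeed the opposite of the boundary contribution $[\partial_tF/\nabla F]_0^1$ you computed, so the cancellation goes through as claimed.
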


\begin{proof}
Recall that $F(t,\cdot)$ is strictly increasing for any $t\in[0,T]$.
By Lemma \ref{l04}, $F$ belongs to
$C^{1,2}\big([0,T]\times[0,1]\big)$.  By Theorem \ref{t02} and the
dominated convergence theorem,
\begin{equation*}
\begin{aligned}
& \frac {d}{dt} S_0(\, u(t)\,) \;=\;
\frac {d}{dt} \: \ms G (\, u(t), F(t) \,) \\
& \quad
\;=\; \big\langle \, \partial_t u(t) \,,\, \Gamma(t) \,\big\rangle
\;+\; \big\langle \, \partial_t F(t),
\frac{F(t)-u(t)}{F(t)[1-F(t)]} \, \big\rangle 
\;+\; \big\langle \, \frac{1}{\nabla F(t)}
\,,\,\partial_t \nabla F(t) \, \big \rangle \\
& \quad
\;+\; A\, \frac{(\partial_t F)(t,0)}{F(t,0) - \alpha}
\;+\; B\, \frac{(\partial_t F)(t,1)}{F(t,1) - \beta} \;\cdot
\end{aligned}
\end{equation*}
As $F$ belongs to $\mc B$ it satisfies mixed boundary conditions at
$x=0$, $x=1$. An integration by parts yields that the previous
expression is equal to
\begin{equation*}
\big\langle \, \partial_t u(t) \,,\, \Gamma(t) \,\big\rangle
\;+\; \big\langle \, \partial_t F(t),
\frac{F(t)-u(t)}{F(t)[1-F(t)]}
\,+\, \frac{\Delta F(t)}{\big(\nabla F(t)\big)^2}
\, \big\rangle \;.
\end{equation*}
To conclude the proof, it remains to recall Remark \ref{rm02}, which
asserts that $F$ solves \eqref{3-01} almost everywhere.
\end{proof}

Recall the definition of the Hamiltonian $\ms H$, given in
\eqref{1-09}, and the one of $\ms M_\delta$, introduced at the
beginning of this section.

\begin{lemma}
\label{l06} 
Fix $\gamma\in \ms {M}_\delta$, and let $F = F(\gamma)$ be the solution
of the boundary value problem \eqref{3-01}. Set
\begin{equation*}
\Gamma (x)  \;=\;  \log \frac{\gamma(x)}{1-\gamma(x)}  \;-\;
\log \frac{F(x)}{1-F(x)}\; \cdot
\end{equation*}
Then,
\begin{equation*}
\ms H(\,\gamma\,,\, \Gamma\,) \; =\;  0 \;.
\end{equation*}
\end{lemma}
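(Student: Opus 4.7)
The plan is a direct verification exploiting two algebraic consequences of the Euler--Lagrange equation \eqref{3-01}. It is cleanest in the logarithmic variables $\varphi := \log[F/(1-F)]$ and $\varphi_\gamma := \log[\gamma/(1-\gamma)]$, so that $\Gamma = \varphi_\gamma - \varphi$, $\nabla F = F(1-F)\,\nabla\varphi$, and $\nabla\gamma = \gamma(1-\gamma)\,\nabla\varphi_\gamma$. A short algebraic manipulation, relying only on the second identity, shows that the bulk integrand of $\ms H$ collapses to
\begin{equation*}
-\,\nabla\gamma\,\nabla\Gamma \;+\; \sigma(\gamma)\,(\nabla\Gamma)^2
\;=\;
-\,\nabla\gamma\,\nabla\varphi \;+\; \gamma(1-\gamma)\,(\nabla\varphi)^2.
\end{equation*}
Rewriting \eqref{3-01} in the $\varphi$ variable (compare \eqref{3-09}) yields $\Delta\varphi = (\gamma + F - 1)\,(\nabla\varphi)^2$. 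Integrating by parts the first term above and substituting this identity, the bulk contribution to $\ms H(\gamma,\Gamma)$ becomes
\begin{equation*}
-\,\bigl[\gamma\,\nabla\varphi\bigr]_0^1 \;+\; \int_0^1 \gamma\, F\, (\nabla\varphi)^2\, dx.
\end{equation*}

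The key observation is that, using $\nabla F = F(1-F)\,\nabla\varphi$ together with the EL identity for $\Delta\varphi$, one has $\nabla(F\nabla\varphi) = F(1-F)(\nabla\varphi)^2 + F(\gamma+F-1)(\nabla\varphi)^2 = \gamma F\,(\nabla\varphi)^2$. Hence the integral is exactly the total derivative $[F\nabla\varphi]_0^1$, and the bulk reduces to the pure boundary contribution $[F\nabla\varphi]_0^1 - [\gamma\nabla\varphi]_0^1$.

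It remains to evaluate the two $\mf b$-terms. From $e^{\Gamma(0)} = \gamma(0)(1-F(0))/[F(0)(1-\gamma(0))]$ and \eqref{4-05}, a routine simplification produces
\begin{equation*}
\mf b_{\alpha,A}\bigl(\gamma(0),\Gamma(0)\bigr) \;=\;
\frac{[\gamma(0) - F(0)]\,[\alpha - F(0)]}{A\, F(0)\,[1-F(0)]}\; ,
\end{equation*}
and applying the Robin condition $A\nabla F(0) = F(0)-\alpha$ from \eqref{3-01} (and $\nabla\varphi(0) = \nabla F(0)/[F(0)(1-F(0))]$) rewrites this as $[F(0) - \gamma(0)]\,\nabla\varphi(0)$. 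The right-hand boundary is handled symmetrically and yields $[\gamma(1) - F(1)]\,\nabla\varphi(1)$. Summing with the bulk contribution $[F\nabla\varphi]_0^1 - [\gamma\nabla\varphi]_0^1$, the $\gamma$- and $F$-values at each endpoint cancel in pairs, giving $\ms H(\gamma,\Gamma) = 0$.

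The main obstacle is spotting the equivalent formulation $\nabla(F\nabla\varphi) = \gamma F (\nabla\varphi)^2$ of the EL equation: without recognising that the bulk integrand is a perfect total derivative, one is left with a nonzero bulk term whose cancellation against the exponential $\mf b$-contributions at the boundary is far from transparent. The remainder is essentially bookkeeping, but the sign patterns in the Robin conditions must be tracked carefully to see that both endpoint contributions vanish simultaneously.
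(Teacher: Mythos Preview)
Your proof is correct and follows the same overall strategy as the paper: simplify the bulk integrand so that the dependence on $\varphi_\gamma$ drops out, compute the $\mf b$-terms using the Robin boundary conditions of \eqref{3-01}, and invoke the Euler--Lagrange equation to close the computation. The one genuine difference is in how you dispatch the remaining bulk integral. The paper splits $-\langle\nabla\gamma,\nabla\varphi\rangle$ as $-\langle\nabla(\gamma-F),\nabla\varphi\rangle-\langle\nabla F,\nabla\varphi\rangle$, integrates the first piece by parts (its boundary terms cancelling directly against the $\mf b$-contributions), and then performs two further algebraic reductions before the EL equation $\Delta F=(\gamma-F)(\nabla F)^2/\sigma(F)$ makes the residual $\langle\gamma-F,(F-\gamma)(\nabla\varphi)^2\rangle+\langle\gamma-F,\Delta F/\sigma(F)\rangle$ vanish. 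Your route is shorter: the identity $\nabla(F\nabla\varphi)=\gamma F(\nabla\varphi)^2$, which is an immediate consequence of $\Delta\varphi=(\gamma+F-1)(\nabla\varphi)^2$, recognises the bulk integral $\int_0^1\gamma F(\nabla\varphi)^2\,dx$ as an exact derivative in one stroke, so everything reduces to boundary terms at once. Both arguments are equivalent in content; yours is a bit more economical, while the paper's intermediate expressions make the role of the EL equation \eqref{3-01} in its original $F$-form more visible.
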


\begin{proof}
By Corollary \ref{l02} and Proposition \ref{p01},
$F\in \ms {M}_{\delta}$. By definition of $\Gamma$ and since $F$
belongs to $\mc B$,
\begin{equation}
\label{4-04}
\begin{aligned}
& \mf b_{\alpha, A} \big(\, \gamma(0)\, ,\, \Gamma (0)\, \big)
\;+\; \mf b_{\beta, B} \big(\, \gamma(1)\, ,\, \Gamma (1)\, \big) \\
&\quad =\; [\, F(0) - \gamma (0)\,]\, 
\frac{(\nabla F)(0)}{F(0) \,[1-F(0)]}
\; -\; [\, F(1) - \gamma (1)\,] \,
\frac{(\nabla F)(1)}{F(1) \,[1-F(1)]} \;\cdot
\end{aligned}
\end{equation}
On the other hand, a straightforward computation yields that
\begin{equation*}
\begin{aligned}
& \big\langle \gamma (1-\gamma) \,,\, \big(\nabla \Gamma\big)^2  \big\rangle
\; -\; \big\langle \nabla \gamma \, ,\, \nabla \Gamma  \big\rangle \\
&\quad
=\; \Big\langle \gamma (1-\gamma) \,,\, \Big( 
\frac{\nabla F}{F(1-F)}
\Big)^2 \Big\rangle \; - \;    
\Big\langle \nabla \gamma \,,\, \frac{\nabla F}{F(1-F)} \Big\rangle \;.
\end{aligned}
\end{equation*}
Rewrite the second term as
\begin{equation*}
-\;  \Big\langle \nabla (\gamma-F) \,,\, \frac{\nabla
F}{F(1-F)}\Big\rangle
\;-\;  \Big\langle \nabla F \,,\, \frac{\nabla F}{F(1-F)} \Big\rangle 
\;,
\end{equation*}
and integrate by parts the first expression. The boundary terms cancel
with the ones appearing in \eqref{4-04}.

Up to this point, we proved that 
\begin{equation*}
\ms H(\,\gamma\,,\, \Gamma\,) \;=\; 
\Big\langle \, \gamma (1-\gamma) \,,\,
\Big( \frac{\nabla F}{F(1-F)} \Big)^2 \, \Big\rangle \;+\;
\Big\langle \gamma-F \,,\, \nabla \frac{\nabla
F}{F(1-F)}\Big\rangle
\;-\;  \Big\langle \nabla F \,,\, \frac{\nabla F}{F(1-F)} \Big\rangle \;.
\end{equation*}
Since $\gamma (1-\gamma) - F(1-F) = (\gamma - F)(1-\gamma - F)$, we
may rewrite this sum as
\begin{equation*}
\Big\langle \, \gamma -F  \,,\, (1-\gamma - F)
\Big( \frac{\nabla F}{F(1-F)} \Big)^2 \, \Big\rangle \;+\;
\Big\langle \gamma-F \,,\, \nabla \frac{\nabla
F}{F(1-F)}\Big\rangle \;.
\end{equation*}
On the other hand, as $\nabla \{\nabla F/ F(1-F)\} = \Delta F/ F(1-F)
- (1-2F) [\nabla F / F(1-F)]^2$, the previous expression is equal to
\begin{equation*}
\Big\langle \, \gamma -F  \,,\, (F-\gamma)
\Big( \frac{\nabla F}{F(1-F)} \Big)^2 \, \Big\rangle \;+\;
\Big\langle \gamma-F \,,\, \frac{\Delta F}{F(1-F)}\Big\rangle \;.
\end{equation*}
This sum vanishes because $F$ is the solution to \eqref{3-01}.
\end{proof}

\begin{remark}
\label{rm3}
Lemma \ref{l05} identifies $\Gamma$ as the functional derivative of
$S$,
\begin{equation*}
\Gamma \;=\; \frac{\delta S_0}{\delta \gamma}
\;=\; \frac{\delta S}{\delta \gamma}\;,
\end{equation*}
and Lemma \ref{l06} states that this derivative
$\Gamma = \delta S/\delta \gamma$ satisfies the Hamilton--Jacobi
equation.
\end{remark}

\subsection{Lower bound for the quasi-potential}
In this subsection, we prove that $V \ge S$.

\begin{lemma}
\label{l07} 
For each $\gamma\in \ms M_{\rm ac}$, $V(\gamma)\ge S(\gamma)$.
\end{lemma}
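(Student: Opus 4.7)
The plan is to exploit the Hamilton--Jacobi structure identified in Lemmas \ref{l05} and \ref{l06}: choosing $H=\Gamma$ in the variational formula for $I_{[0,T]}$ produces the difference $S_0(u(T))-S_0(u(0))$ essentially for free, because the Hamiltonian vanishes on the pair $(u,\Gamma)$. Once this is established for smooth, non-degenerate paths, the general bound follows from the approximation supplied by Theorem \ref{mt3b}.

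First I would treat a path $u\in D_{T,\delta}$ with $u(0)=\bar\rho$ and $u(T)=\gamma$. By Proposition \ref{p01} and Lemma \ref{l04}, $F(t,x):=F(u(t,\cdot))(x)$ belongs to $C^{1,2}([0,T]\times[0,1])$ and, by \eqref{3-07}, takes values in a compact subinterval of $(0,1)$. Consequently
\begin{equation*}
\Gamma(t,x) \;:=\; \log \frac{u(t,x)}{1-u(t,x)} \;-\; \log \frac{F(t,x)}{1-F(t,x)}
\end{equation*}
lies in $C^{1,2}([0,T]\times[0,1])$ and is an admissible test function in \eqref{1-01b}. Inserting $H=\Gamma$ and integrating by parts in time to rewrite $J_{T,\Gamma}(u)$ in the Hamiltonian form \eqref{7-01b}, then invoking Lemma \ref{l06} to kill the Hamiltonian since $u_t\in\ms M_\delta$, I obtain
\begin{equation*}
I_{[0,T]}(u\,|\,\bar\rho) \;\ge\; J_{T,\Gamma}(u) \;=\; \int_0^T \big\{\langle \partial_t u_t,\Gamma_t\rangle-\ms H(u_t,\Gamma_t)\big\}\,dt \;=\; \int_0^T \langle \partial_t u_t,\Gamma_t\rangle\,dt;
\end{equation*}
by Lemma \ref{l05} the last integral equals $S_0(\gamma)-S_0(\bar\rho)=S(\gamma)$.

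For a general path $u$ with $u(0)=\bar\rho$, $u(T)=\gamma$ and $I_{[0,T]}(u\,|\,\bar\rho)<\infty$, I would apply the second part of Theorem \ref{mt3b}: since $\bar\rho\in[\alpha,\beta]\subset(0,1)$ is bounded away from $\{0,1\}$, there exists a sequence $u^n\in\Pi_{\bar\rho}$, uniformly bounded away from $\{0,1\}$ on all of $[0,T]\times[0,1]$, such that $u^n\to u$ in $D([0,T],\ms M)$ and $I_{[0,T]}(u^n\,|\,\bar\rho)\to I_{[0,T]}(u\,|\,\bar\rho)$. For each $u^n$, uniqueness of the stationary solution to \eqref{1-06} forces $u^n\equiv\bar\rho$ on the initial hydrodynamic interval $[0,\mf t_n]$, so the corresponding $\Gamma^n$ vanishes there and the argument above, applied on $[\mf t_n,T]$, yields $I_{[0,T]}(u^n\,|\,\bar\rho)\ge S(u^n(T))$. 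Passing to the limit with the lower semi-continuity of $S$ (Theorem \ref{t02}) and $u^n(T)\to\gamma$ in $\ms M_{\rm ac}$ gives $I_{[0,T]}(u\,|\,\bar\rho)\ge S(\gamma)$. Infimizing over $u$ and over $T>0$ yields $V(\gamma)\ge S(\gamma)$; if no path of finite cost connects $\bar\rho$ to $\gamma$, the bound is vacuous.

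The main obstacle is making $H=\Gamma$ rigorous as a test function, which demands the smoothness and non-degeneracy of $u$, $F$ and $\Gamma$ that triggers Lemmas \ref{l05} and \ref{l06}. This is compatible with the approximation class $\Pi_{\bar\rho}$ precisely because $\bar\rho$ lies in the interior of $[0,1]$, so the strengthened version of Theorem \ref{mt3b} is applicable; and it is the fact that $\bar\rho$ is the unique stationary profile of \eqref{1-06} which ensures that the approximants $u^n$ sit at $\bar\rho$ on their initial hydrodynamic sliver, so no delicate matching of $\Gamma^n$ at $t=\mf t_n$ is required.
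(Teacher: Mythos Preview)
Your proposal is correct and follows essentially the same route as the paper: test with $H=\Gamma$ on smooth non-degenerate paths via Lemmata \ref{l05} and \ref{l06}, then approximate a general finite-cost path by Theorem \ref{mt3b} (strengthened form, since $\bar\rho$ is bounded away from $0$ and $1$) and pass to the limit using the lower semicontinuity of $S$. Your digression about the initial hydrodynamic interval is unnecessary: since $u^n\equiv\bar\rho$ on $[0,\mf t_n]$ and is smooth on $(0,T]$, the approximant lies in $D_{T,\delta_n}$ outright, so the $\Gamma$-argument applies on the full interval $[0,T]$ without splitting; the paper simply asserts this and proceeds.
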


\begin{proof}
In view of the variational definition of $V$, we have to show that
$S(\gamma) \le I_{[0,T]}(u |\bar\rho)$ for any $T>0$ and any path
$u\in D\big([0,T];\ms {M}\big)$ which connects the stationary profile
$\bar\rho$ to $\gamma$ in the time interval $[0,T]$: $u(0)=\bar\rho$,
$u(T)=\gamma$.
  
Fix such a path $u$ and assume first that $u$ belongs to
$D_{T,\delta}$ for some $\delta>0$.  For $0\le t\le T$, let
$F(t) = F(u(t))$ be the solution to the elliptic problem \eqref{3-01}
with $u(t)$ in place of $\gamma$.  In view of the variational
definition of $I_{[0,T]}(u |\bar\rho)$ given in \eqref{1-02}, to prove
that $S(\gamma) \le I_{[0,T]}(u |\bar\rho)$ it is enough to exhibit
some function $H \in C^{1,2} ([0,T] \times [0,1])$ for which
$S(\gamma) \le J_{T,H} (u)$.  We claim that $\Gamma$ given in
\eqref{4-02} fulfills these conditions.

Note that $\Gamma$ belongs to $C^{1,2} ([0,T] \times [0,1])$ because,
on the one hand, $u$ belongs to this set as it is assumed to be
in $D_{T,\delta}$. On the other hand, by Lemma \ref{l04},
$F\in C^{1,2} ([0,T] \times [0,1])$.

Recall the definition of the Hamiltonian $\ms H$ introduced in
\eqref{1-09}.  By \eqref{1-01b}, integrating by parts in time yields
that
\begin{equation*}
J_{T,\Gamma} \big (\, u \, \big) \; =\;
\int_0^T \big\{\, \big\langle \, \partial_t u (t) \,,\,
\Gamma(t) \, \big\rangle \,-\,
\ms H (\, u(t) \,,\, \Gamma(t) \, \big) \, \big\} 
\;dt  \;.
\end{equation*}
By Lemmata \ref{l05} and \ref{l06},
$J_{T,\Gamma} (u) = S_0(u(T)) \,-\, S_0(u(0)) = S_0(\gamma) -
S_0(\bar\rho) = S(\gamma)$.

Up to this point we have shown that $S(\gamma) \le I_{[0,T]}(u |\bar\rho)$
for smooth paths $u$ bounded away from $0$ and $1$. We extend this
result to arbitrary  paths

Fix a path $u$ with finite rate function:
$I_{[0,T]}(u|\bar\rho) < \infty$.  Since
$\alpha \le \bar\rho \le \beta$, by Theorem \ref{mt3b}, there exists a
sequence $\{u^n ,\, n\ge 1\}$, $u^n \in D_{T,\delta_n}$ for some
$\delta_n>0$, such that $u_n$ converges to $u$ and
$I_{[0,T]}(u_n|\bar\rho)$ converges to $I_{[0,T]}(u|\bar\rho)$.
Therefore, by the result on smooth paths and the lower semi continuity
of $S$,
\begin{equation*}
I_{[0,T]}(u|\bar\rho) \; =\; \lim_{n\to\infty} I_{[0,T]}(u_n|\bar\rho) \ge 
\liminf_{n\to\infty} S\big(u_n(T) \big) \ge S(u(T)) \;,
\end{equation*}
which concludes the proof of the lemma. 
\end{proof}

\subsection{Upper bound, the adjoint hydrodynamic equation}

The following lemma explains which is the right candidate for the 
optimal path for the variational problem \eqref{1-04}. 
For $0< \varrho <1$, $D>0$, $0<a<1$, $M\in \bb R$, let 
\begin{equation}
\label{6-04}
\mf q_{\varrho, D} (a, M) \;=\; \frac{1}{D}\, \Big\{ [1-a] \, \varrho \,
\big[\, e^{M} \,-\, M \, -\, 1\,\big] \;+\;
a \, [1-\varrho] \,  \big[\, e^{-M} \,+\, M \, -\, 1\,\big]\, \Big\} \;.
\end{equation}
Note that $\mf q_{\varrho, D} (a, \cdot\, )$ is a nonnegative, convex
function which vanishes at the origin. Recall the definition of
$\mf p_{\varrho, D} (a,M)$, introduced in \eqref{5-02}.

\begin{lemma}
\label{l08} 
Fix a profile $\psi \in \ms M_\delta$, and a path $u \in D_{T,\delta}$
with finite rate function, $I_{[0,T]}(u|\psi)<\infty$.  For
$0\le t\le T$, denote by $F(t)= F(u(t))$ the unique solution to the
boundary value problem \eqref{3-01} with $\phi$ replaced by
$u(t)$. Then, there exists a function $K\in\mc H^1 (\Omega_T)$
such that $u$ is the weak solution to
\begin{equation}
\label{4-06} 
\left\{
\begin{aligned}
&\partial_t u \;=\; - \,  \Delta u
\;+\; 2\,  \nabla \Big( \sigma(u)   \,
\nabla  \big[ \log \frac{ F}{1-F} + K \big] \Big) \;,  
\quad  (t,x)\in [0,T]\times (0,1) \\
& \nabla u_t (1) \,-\, 2\, \sigma(u_t(1)) \, \nabla G_t(1) \,=\,
\mf p_{\beta, B} \big(\, u_t(1)\,,\, G_t(1)\, \big) \;, \\
& \nabla u_t (0) \,-\, 2\, \sigma(u_t(0)) \, \nabla G_t(0) \,=\,
-\, \mf p_{\alpha, A} \big(\, u_t(0)\,,\, G_t(0)\, \big) \;, \\
& u(0,x) \;=\; \psi (x) \;, \quad x\in [0,1]\;.
\end{aligned}
\right.
\end{equation}
where $G_t = \Gamma_t - K_t$ and $\Gamma_t$ is given by \eqref{4-02}.
Moreover, 
\begin{equation}
\label{4-07} 
\begin{aligned}
& I_{[0,T]}(u|\psi) \;=\;  S_0(u(T)) \,-\,  S_0(\psi)  \,+\,
\int_0^T \big\langle\, \sigma(u(t))\,,\, [ \nabla K(t)]^2 \,
\big\rangle \\
& \; +\; \int_0^T e^{G_t(1)}\,
\mf q_{\beta, B} (u_t(1), K_t(1)) \; dt
\;+\; \int_0^T e^{G_t(0)}\, \mf q_{\alpha, A} (u_t(0), K_t(0)) \; dt \;.
\end{aligned}
\end{equation}
\end{lemma}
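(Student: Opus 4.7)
The plan is to extract the optimal ``force'' $H$ from Lemma \ref{l09}, perform the Hamiltonian change of variables $K := \Gamma - H$ suggested by the identification $\Gamma = \delta S/\delta \gamma$, and unfold the resulting algebra using the Hamilton--Jacobi identity of Lemma \ref{l06} together with the entropy derivative identity of Lemma \ref{l05}.

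First, I would apply Lemma \ref{l09} to the smooth, bounded-away-from-boundary path $u\in D_{T,\delta}$ with $I_{[0,T]}(u|\psi)<\infty$, obtaining a unique $H\in\mc H^1(\Omega_T)$ for which $u$ is the weak solution of \eqref{5-01} and the rate function satisfies \eqref{5-03}. Since $u$ is smooth and bounded away from $0,1$, Lemma \ref{l04} gives $F\in C^{1,2}([0,T]\times[0,1])$, hence $\Gamma$ from \eqref{4-02} lies in $C^{1,2}$, and the candidate $K:=\Gamma-H$ belongs to $\mc H^1(\Omega_T)$.

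Next I would verify that $u$ is a weak solution of \eqref{4-06}. The key identity is $\Delta u = \nabla[\sigma(u)\nabla\log(u/(1-u))] = \nabla[\sigma(u)\nabla(\Gamma+\log(F/(1-F)))]$; substituting $K=\Gamma-H$ into the right-hand side of \eqref{4-06} collapses it to $\Delta u - 2\nabla[\sigma(u)\nabla H]$, which equals $\partial_t u$ by \eqref{5-01}. Since $G:=\Gamma-K=H$, the Robin-type boundary conditions in \eqref{4-06} reduce identically to those satisfied by $H$ in \eqref{5-01}.

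To obtain \eqref{4-07}, I would start from \eqref{5-03} and expand $(\nabla H)^2=(\nabla\Gamma-\nabla K)^2$. Lemma \ref{l06} gives $\langle\sigma(u),(\nabla\Gamma)^2\rangle = \langle\nabla u,\nabla\Gamma\rangle - \mf b_{\alpha,A}(u(0),\Gamma(0)) - \mf b_{\beta,B}(u(1),\Gamma(1))$. Lemma \ref{l05} yields $S_0(u(T))-S_0(\psi)=\int_0^T\langle\partial_t u,\Gamma\rangle\,dt$; using \eqref{5-01} and integrating by parts, the Robin conditions turn this into
\begin{equation*}
S_0(u(T))-S_0(\psi) \;=\; -\!\int_0^T\!\langle\nabla u,\nabla\Gamma\rangle dt \,+\, 2\!\int_0^T\!\langle\sigma(u)\nabla H,\nabla\Gamma\rangle dt \,+\, \int_0^T\!\{\Gamma(1)\mf p_{\beta,B}(u(1),H(1)) + \Gamma(0)\mf p_{\alpha,A}(u(0),H(0))\}dt.
\end{equation*}
Combining these identities, the bulk contributions reorganize precisely into $S_0(u(T))-S_0(\psi) + \int_0^T\langle\sigma(u),(\nabla K)^2\rangle dt$, leaving at each endpoint a boundary residue of the form $\mf b_{\varrho,D}(a,M+N) + \mf c_{\varrho,D}(a,M) - (M+N)\mf p_{\varrho,D}(a,M)$, with $a=u(\cdot)$, $M=H(\cdot)=G(\cdot)$, $N=K(\cdot)$.

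The main bookkeeping obstacle is then the algebraic identification of this residue with the expression $e^{G(\cdot)}\mf q_{\varrho,D}(u(\cdot),K(\cdot))$. Using $\mf b+\mf c=M\mf p$ the residue reduces to $\mf b(a,M+N)-\mf b(a,M)-N\mf p(a,M)$, namely the second-order Taylor-type remainder of $M\mapsto\mf b(a,M)$. Splitting into the $(1-a)\varrho$ and $a(1-\varrho)$ pieces and factoring out $e^{M}$ and $e^{-M}$ respectively, these match the two summands of $\mf q_{\varrho,D}(a,N)$ tilted by the reservoir-entry/exit factor, accounting for the $e^{G}$ weight. Verifying this identity cleanly (and treating the $x=0,x=1$ endpoints symmetrically, with the sign flip in $\mf p_{\alpha,A}$ absorbed by the orientation of the integration-by-parts boundary term) is where the reservoir structure enters and is the principal technical step.
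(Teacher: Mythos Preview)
Your approach is essentially the same as the paper's: extract $H$ from Lemma \ref{l09}, set $K:=\Gamma-H$ (so $G=H$), read off \eqref{4-06} from \eqref{5-01}, and derive \eqref{4-07} by combining Lemma \ref{l05}, Lemma \ref{l06}, and \eqref{5-03}. The only cosmetic difference is the direction of the computation: the paper starts from the identity $S_0(u(T))-S_0(\psi)=\int_0^T\langle\partial_t u,\Gamma\rangle\,dt$, substitutes $\partial_t u$ via \eqref{5-01}, integrates by parts, and then adds $\int\langle\sigma(u),(\nabla K)^2\rangle$ and the $\mf c$-terms to recognize $I_{[0,T]}(u|\psi)$; you start from \eqref{5-03}, expand $(\nabla H)^2=(\nabla\Gamma-\nabla K)^2$, and substitute the other two lemmas. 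Either route lands on the same boundary residue $\mf b(a,\Gamma)+\mf c(a,H)-\Gamma\,\mf p(a,H)$, and your reduction via $\mf b+\mf c=M\mf p$ to the second-order remainder $\mf b(a,M+N)-\mf b(a,M)-N\mf p(a,M)$ is exactly the right simplification.
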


\noindent{\bf Strategy of the proof of the upper bound:} We present
below the main steps of the proof in light of Lemma \ref{l08}. Fix a
density profile $\gamma \in \ms M_{\rm ac}$ and denote by
$(v^{(\gamma)}, F^{(\gamma)})$ the solution to the time-reversed
equation \eqref{4-06} with $K=0$ and starting from $\gamma$:
\begin{align}
\label{4-08}
&
\left\{
\begin{aligned}
& \partial_t v \,=\,  \Delta v  \,-\, 2\,  \nabla \big( \, \sigma (v) 
\, \nabla \log \frac{ F}{1-F} \, \big)   \quad (t,x) \in
(0,\infty)\times (0,1) \;, \\
& \nabla v_t (1) \,-\, 2\, \sigma(v_t(1)) \, \nabla H_t(1) \,=\,
\mf p_{\beta, B} \big(\, v_t(1)\,,\, H_t(1)\, \big) \;, \\
& \nabla v_t (0) \,-\, 2\, \sigma(v_t(0)) \, \nabla H_t(0) \,=\,
-\, \mf p_{\alpha, A} \big(\, v_t(0)\,,\, H_t(0)\, \big) \;, \\
& v(0,\cdot) = \gamma (\,\cdot\,) \;, \quad  x\in [0,1]\;, 
\end{aligned}
\right.
\\
\label{4-08b}
&
\left\{
\begin{aligned}
& \Delta F_t = 
\big( v_t - F_t \big) \frac{\big( \nabla F_t \big)^2}{F_t(1-F_t)}
\quad (t,x) \in (0,\infty)\times (0,1) \; , \\
& 
\nabla F_t(0) = A^{-1} [F_t(0)-\alpha] \;, \quad
\nabla F_t(1) = B^{-1} [\beta - F_t(1)] \; . 
\end{aligned}
\right.
\end{align}
In equation \eqref{4-08},
$H_t = \log [v_t/(1-v_t)] - \log [F_t/(1-F_t)]$ and
$\mf p_{\varrho, D}$ has been introduced in \eqref{5-02}.  Equation
\eqref{4-08} will be shown to be equivalent to \eqref{4-11c}.
Proposition \ref{l10} provides a precise meaning to the coupled
equation \eqref{4-08}--\eqref{4-08b}.

The second step consists in proving that the solution $v^{(\gamma)}_t$
of equation \eqref{4-08} converges to $\bar\rho$ as $t\to
\infty$. This is the content of Lemma \ref{l12}, where we prove that
this convergence takes place in $\ms L^\infty$.

Fix $T_1$ large enough for $v^{(\gamma)}(T_1)$ to be close to
$\bar\rho$ in $\ms L^\infty$. Reverse in time the path $v^{(\gamma)}$
by setting $w^{(1)}(t) = v^{(\gamma)}(T_1-t)$, $0\le t\le T_1$. The
path $w^{(1)}$ satisfies equation \eqref{4-06} with $K=0$ and
$\psi = v^{(\gamma)} (T_1)$. Therefore, by Lemma \ref{l08},
\begin{equation*}
I_{[0,T_1]}(\, w^{(1)} \,|\, v^{(\gamma)} (T_1)\,)
\;=\;  S_0(\gamma) \,-\,  S_0(v^{(\gamma)} (T_1)) \;.
\end{equation*}

It remains to replace in the previous formula $v^{(\gamma)} (T_1)$ by
$\bar\rho$, keeping in mind that $v^{(\gamma)} (T_1)$ is close to
$\bar\rho$ in the $\ms L^\infty$ norm. This is done in Lemma
\ref{l13}, where we show that if $\phi$ is close to $\bar\rho$ in
$\ms L^\infty$, then there exists a path $w^{(2)}_t$, $0\le t\le 1$,
which connects $\bar\rho$ to $\phi$ and such that
$I_{[0,1]}(\, w^{(2)} \,|\, \bar\rho\,) \le C_0 \,\Vert \phi -
\bar\rho\Vert^2_2$.

Define the path $w(t)$, $0\le t\le T_1+1$, by $w(t) = w^{(2)}(t)$ for
$0\le t\le 1$, $w(t) = w^{(1)}(t-1)$, $1\le t\le T_1+1$. By definition
$w(0) = \bar\rho$ and $w(T_1+1) = \gamma$. Moreover, by the previous
bounds of the rate functional, and since $w(1) = v^{(\gamma)} (T_1)$,
\begin{equation*}
\begin{aligned}
I_{[0,T_1+1]}(\, w \,|\, \bar\rho\,) \; & =\;
I_{[0,1]}(\, w^{(2)} \,|\, \bar\rho\,) \;+\;
I_{[1,T_1+1]}(\, w^{(1)} (1+\,\cdot\,) \,|\, v^{(\gamma)} (T_1) \,) \\
\; & \le\; S_0(\gamma) \,-\,  S_0(v^{(\gamma)} (T_1)) \;+\;
C_0\, \,\Vert  v^{(\gamma)} (T_1) - \bar\rho\Vert^2_2\;.
\end{aligned}
\end{equation*}
The first identity says that the cost of a path in the time-interval
$[0,T_1+1]$ is equal to its cost in the interval $[0,1]$ plus its cost
in the interval $[1,T_1+1]$. Equation \eqref{4-02b} states that the
inequality holds, which is enough for the argument.  By lower
semicontinuity of $S_0$, and since $v^{(\gamma)} (T_1) \to \bar\rho$
as $T_1\to\infty$,
$S_0(\bar\rho) \le \liminf_{T_1\to\infty} S_0(v^{(\gamma)} (T_1))$.
Hence, for all $\epsilon >0$, there exists $T_1$ large enough such
that
\begin{equation*}
I_{[0,T_1+1]}(\, w \,|\, \bar\rho\,) \;
\le\; S_0(\gamma) \,-\,  S_0(\bar\rho) \;+\; \epsilon \;.
\end{equation*}
This proves that $V(\gamma) \le S(\gamma)$, as claimed.

\begin{proof}[Proof of Lemma \ref{l08}]
Denote by $H$ the function in $\mc H^1(\Omega_T)$ introduced in Lemma
\ref{l09}, and recall from (\ref{4-02}) the definition of $\Gamma$.
Set $K:=\Gamma-H$, so that $G=H$. The function $K$ belongs to
$\mc H^1(\Omega_T)$ because, by hypothesis, $u\in D_{T,\delta}$ and,
by Lemma \ref{l04}, $F\in C^{1,2} (\, [0,T]\times [0,1]\,)$. Then
\eqref{4-06} follows easily from \eqref{5-01}.

We turn to the identity \eqref{4-07}, Note that
$\partial_t u \;=\; \Delta u \;-\; 2\, \nabla (\, \sigma(u) \,
\nabla \, [ \, \Gamma \,-\, K \,] \,)$. In \eqref{4-03}, replace
$\partial_t u(t)$ by the right-hand side of this identity and
integrate by parts to get that
\begin{equation*}
\begin{aligned}
& S_0(u(T)) \,-\, S_0(\psi) \; =\;
-\, \int_0^T  \big\langle \nabla u(t) \,,\, \nabla \Gamma (t)
\big\rangle \; dt \\
& \qquad +\; 2\, \int_0^T  \big\langle \, \sigma(u(t))\, 
\nabla (\, \Gamma (t) - K(t)\,) \, ,\,
\nabla \Gamma (t) \, \big\rangle \; dt 
\\
&\qquad +\; \int_0^T  \mf p_{\beta, B} \big(\, u_t(1)\,,\, H_t(1) \,\big)
\, \Gamma_t (1) \, dt \;+\;
\int_0^T \mf p_{\alpha, A} \big(\, u_t(0)\,,\, H_t(0) \,\big)
\, \Gamma_t (0)\; dt \;.
\end{aligned}
\end{equation*}
By Lemma \ref{l06}, the previous expression is equal to
\begin{equation*}
\begin{aligned}
& - \, \int_0^T  \big\langle \, \sigma(u(t))\, 
(\, \nabla  \Gamma (t) \,) ^2 \, \big\rangle \; dt 
\; +\; 2\, \int_0^T  \big\langle \, \sigma(u(t))\, 
\nabla (\, \Gamma (t) - K(t)\,) \, ,\,
\nabla \Gamma (t) \, \big\rangle \; dt 
\\
&  -\; \int_0^T  \mf b_{\beta, B} \big(\, u_t(1)\, ,\,
\Gamma_t (1)\, \big) \; dt
\;-\; \int_0^T \mf b_{\alpha, A} \big(\, u_t(0)\, ,\, \Gamma_t (0)\,
\big)\; dt
\\
& + \; \int_0^T  \mf p_{\beta, B} \big(\, u_t(1)\,,\, H_t(1) \,\big)
\, \Gamma_t (1) \; dt \;+\;
\int_0^T \mf p_{\alpha, A} \big(\, u_t(0)\,,\, H_t(0) \,\big)
\, \Gamma_t (0)\; dt \;.
\end{aligned}
\end{equation*}

Up to this point, we expressed the difference
$S_0(u(T)) \,-\, S_0(\psi)$ as a sum of many terms. Add on both sides of
this identity $\int_0^T \big\langle \, \sigma (u(t))\, (\, \nabla K (t) \,) ^2 \,
\big\rangle \; dt$. Add and subtract on the right-hand side
\begin{equation*}
\int_0^T \mf c_{\beta, B} \big(\, u_t(1)\,,\, H_t(1)\, \big)\; dt 
\; +\; \int_0^T
\mf c_{\alpha, A} \big(\, u_t(0)\,,\, H_t(0)\, \big)\; dt\;.
\end{equation*}
Recall that $H = \Gamma - K = G$ and, from \eqref{5-03}, the identity
satisfied by $I_{[0,T]}(u|\psi)$, to get after these summations that
\begin{equation*}
\begin{aligned}
& S_0(u(T)) \;-\;  S_0(\psi)  \;+\;
\int_0^T \big\langle \, \sigma (u(t))\, (\, \nabla K (t) \,) ^2 \,
\big\rangle \; dt \\
&\; =\; I_{[0,T]}(u|\psi) \;-\; \int_0^T e^{H_t(1)}\,
\mf q_{\beta, B} (u_t(1), K_t(1)) \; dt
\;-\; \int_0^T e^{H_t(0)}\, \mf q_{\alpha, A} (u_t(0), K_t(0)) \; dt \;,
\end{aligned}
\end{equation*}
as claimed [because $H=G$].
\end{proof}

We turn to the proof of the upper bound for the quasi-potential, as
described in the Strategy of the proof.  We first simplify the
boundary conditions in equations \eqref{4-08}--\eqref{4-08b}.

As $H_t = \log [v_t/(1-v_t)] - \log [F_t/(1-F_t)]$, the boundary terms
are given by
\begin{equation}
\label{4-09}
\begin{gathered}
- \, \nabla v_t \,+\, 2\, \frac{\sigma(v_t)}{\sigma(F_t)} \, \nabla F_t \,=\,
\frac{1}{B\, \sigma(F_t)} \, \big\{\, \beta\, v_t\,
[\, 1- F_t\,]^2 \,-\, (1-\beta) \, (1-v_t) \, F_t^2 \,\big\}
\;, \\
- \, \nabla v_t \,+\, 2\, \frac{\sigma(v_t)}{\sigma(F_t)} \, \nabla F_t \,=\,
\frac{-\, 1}{A\, \sigma(F_t)} \, \big\{\, \alpha\, v_t\,
[\, 1- F_t\,]^2 \,-\, (1-\alpha) \, (1-v_t) \, F_t^2 \,\big\}
\;, 
\end{gathered}
\end{equation}
for $x=1$, $0$, respectively. Let $\color{bblue} R = \log [ F/(1-F)]$.
With this notation, equation \eqref{4-08} can be written as
\begin{equation}
\label{4-11}
\left\{
\begin{aligned}
& \partial_t v \,=\,  \Delta v  \,-\, 2\,  \nabla \big( \, \sigma (v) 
\, \nabla R \, \big)   \quad (t,x) \in
(0,\infty)\times (0,1) \;, \\
& \nabla v_t (1) \,-\, 2\, \sigma(v_t(1)) \, \nabla R_t(1) \,=\,
\mf p_{1-\beta, B} \big(\, v_t(1)\,,\, R_t(1)\, \big) \;, \\
& \nabla v_t (0) \,-\, 2\, \sigma(v_t(0)) \, \nabla R_t(0) \,=\,
-\, \mf p_{1-\alpha, A} \big(\, v_t(0)\,,\, R_t(0)\, \big) \;, \\
& v(0,\cdot) = \gamma \;, \quad  x\in [0,1]\;,
\end{aligned}
\right. 
\end{equation}
where $F_t$ is the solution to \eqref{4-08b}. Note that this equation
corresponds to equation \eqref{4-11c}.

Proposition \ref{l10} provides a precise meaning for the system of
equations \eqref{4-08}--\eqref{4-08b}, or, equivalently,
\eqref{4-11}--\eqref{4-08b}. This proposition requires an estimate on
the solutions to equation \eqref{1-06}. Denote by $u^{(\gamma)}$,
$\gamma\in\ms M_{\rm ac}$, the solution to \eqref{1-06} with initial
conditions $\gamma$, and by $F = F(\gamma)$ the solution to
\eqref{3-01}. Recall the definition of the constant $p$ and $q$
introduced in \eqref{3-15}.

\begin{lemma}
\label{l16}
For every $\gamma\in\ms M_{\rm ac}$, $(t,x) \in \bb R_+ \times [0,1]$,
$\alpha \,+\, Ap \, \le\, u^{(F(\gamma))}(t,x) \,\le\, \beta \,-\,
Bp$. Moreover, for every $T>0$, there exists a constant
$c_1 = c_1(A, B, \alpha, \beta, T)>0$ such that
$c_1 \le \nabla u^{(F(\gamma))} (t, x) \le c^{-1}_1$ for all
$\gamma\in\ms {M}_{\rm ac}$, $(t,x)\in [0,T]\times [0,1]$.
\end{lemma}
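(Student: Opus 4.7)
\smallskip\noindent\textbf{Proof plan.}
The plan is to deduce both assertions from the parabolic comparison principle applied to the Robin heat equation \eqref{1-06}, exploiting the structural fact, proved in Lemma~\ref{l01}(b), that $F(\gamma)\in\mc B$. This provides the pointwise bounds $\alpha+Ap\le F(\gamma)\le \beta-Bp$ from \eqref{3-07} and the gradient bound $p\le \nabla F(\gamma)\le q$ on the initial datum.

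For the first assertion, I would introduce the stationary affine functions
\begin{equation*}
\underline u(x) \;:=\; \alpha + Ap + p x\;, \qquad
\overline u(x) \;:=\; \beta - Bp - p(1-x)\;,
\end{equation*}
and verify that $\underline u$ is a subsolution and $\overline u$ a supersolution of \eqref{1-06}. Both satisfy $\Delta(\cdot)=0$, and the Robin condition at $x=0$ for $\underline u$, respectively at $x=1$ for $\overline u$, is an equality. The remaining boundary inequality reduces in each case to $(1+A+B)\, p \le \beta-\alpha$, a direct consequence of the definition \eqref{3-15} of $p$ and the assumption $\alpha\le\beta$ (rewrite it as $\alpha[1+B-A(\beta-\alpha)]\le \beta(1+B)$ and bound each factor). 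Since $F(\gamma)$ has $\nabla F(\gamma)\ge p$ and solves the Robin conditions, one obtains $\underline u\le F(\gamma)\le \overline u$ on $[0,1]$. The Robin parabolic comparison principle, whose proof rules out a negative interior minimum of $u^{(F(\gamma))}-\underline u$ via the strong maximum principle and rules out minima on the spatial boundary because the Robin condition forces $\nabla(u^{(F(\gamma))}-\underline u)$ to have the wrong sign at such a point, then yields $\alpha+Ap\le u^{(F(\gamma))}(t,x)\le \beta-Bp$ for all $(t,x)\in\bb R_+\times[0,1]$.

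For the gradient bound, set $w:=\nabla u^{(F(\gamma))}$. Differentiating \eqref{1-06} in $x$ yields $\partial_t w=\Delta w$, while the Robin conditions for $u^{(F(\gamma))}$ transcribe into time-dependent Dirichlet data
\begin{equation*}
w(t,0) \;=\; A^{-1}[u^{(F(\gamma))}(t,0)-\alpha]\;, \qquad
w(t,1) \;=\; B^{-1}[\beta-u^{(F(\gamma))}(t,1)]\;.
\end{equation*}
By the first step these Dirichlet values lie in $[p,\,C_0]$ with $C_0:=\max\{(\beta-\alpha-Bp)/A,\,(\beta-\alpha-Ap)/B\}$, while at $t=0$ one has $w(0,\cdot)=\nabla F(\gamma)\in[p,q]$ on $[0,1]$. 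Compatibility at the four corners of $[0,T]\times\{0,1\}$ is automatic because $F(\gamma)$ itself satisfies the Robin conditions. The classical Dirichlet maximum principle then delivers $p\le w(t,x)\le \max\{q,C_0\}$ on $[0,T]\times[0,1]$, proving the second assertion, with constants in fact independent of $T$.

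The main obstacle will be regularity: for arbitrary $\gamma\in\ms M_{\rm ac}$ the initial datum $F(\gamma)$ is only $C^1$, so the differentiation underlying the second step is not immediately justified. I would resolve this by approximating $\gamma$ by smooth profiles $\gamma_n\in\ms M_{\delta_n}$ with $\gamma_n\to\gamma$ almost everywhere; Proposition~\ref{p01}(ii) then gives $F(\gamma_n)\to F(\gamma)$ in $C^1([0,1])$, and standard continuous dependence of weak solutions to \eqref{1-06} on the initial data yields uniform convergence of $u^{(F(\gamma_n))}$ to $u^{(F(\gamma))}$ on $[0,T]\times[0,1]$. Applying the argument above to each classical solution $u^{(F(\gamma_n))}$ and passing to the limit transfers the pointwise bounds to $u^{(F(\gamma))}$, since the constants $p$, $q$, $C_0$ depend only on $A$, $B$, $\alpha$, $\beta$ and not on $n$.
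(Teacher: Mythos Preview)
Your approach is essentially the paper's: both assertions are handled by the parabolic maximum principle, the second one after passing to $v:=\nabla u^{(F(\gamma))}$, which solves the Dirichlet heat problem that the paper records as \eqref{1-06d}. A couple of differences are worth noting. For the value bound, the paper simply cites Theorem~\ref{mt4}, which as stated only yields $\alpha\le u^{(F(\gamma))}\le\beta$; your explicit affine barriers $\underline u,\overline u$ (and the verification $(1+A+B)p\le\beta-\alpha$, which indeed reduces to $(\alpha-\beta)(1+A\alpha+B)\le 0$) are what is actually needed to reach the sharper interval $[\alpha+Ap,\beta-Bp]$ of the statement. For the gradient bound, the paper writes down \eqref{1-06d} and invokes Protter--Weinberger without discussing continuity of $v$ up to $t=0$; your smoothing via Proposition~\ref{p01}(ii) and passage to the limit is a clean way to close that gap, and your observation that the resulting constant $c_1$ does not depend on $T$ is correct.
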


\begin{proof}
By \eqref{3-07} and Corollary \ref{l02},
$\alpha \,+\, Ap \, \le\, F(\gamma) (x) \,\le\, \beta \,-\, Bp$ for
all $\gamma\in\ms M_{\rm ac}$, $x \in [0,1]$.  The first assertion of
the lemma follows from Theorem \ref{mt4}.

By Corollary \ref{l02}, $F = F(\gamma)$ belongs to $\mc B$. Therefore,
$p\le \nabla F (x) \le q$ for all $0\le x\le 1$,
$\gamma\in\ms M_{\rm ac}$. Let $v = \nabla u^{(F(\gamma))}$. Then, $v$
solves the equation
\begin{equation}
\label{1-06d} 
\begin{cases}
\partial_t v \,=\, \Delta v\\
v (t,0) \,=\, A^{-1} \,[\, u^{(F(\gamma))}(t,0) - \alpha\,] \\
v (t,1) \,=\, B^{-1}\, [\beta \,-\, u^{(F(\gamma))}(t,1)\,] \\
v (0,\cdot) \,=\, \nabla F (\cdot)\;.
\end{cases}
\end{equation}
The maximum principle, Theorem 2 of \cite[Chapter 3]{PW84}, states
that the maximum and the minimum of $v$ are attained at the boundary.
The assertion of the lemma follows from the bounds on
$u^{(F(\gamma))}$ and $\nabla F$ obtained above. These estimates are
uniform over $\gamma\in\ms M_{\rm ac}$.
\end{proof}

\begin{proposition}
\label{l10} 
Fix $\gamma\in\ms {M}_{\rm ac}$, and denote by
$F^{(\gamma)}(t) = u^{(F(\gamma))} (t)$ the solution to the heat
equation \eqref{1-06} with initial condition
$F^{(\gamma)} (0, \,\cdot\,) = F(\gamma) (\,\cdot\,)$. Define
$v^{(\gamma)}=v^{(\gamma)}(t,x)$ by \eqref{2-06}. Then,
$v^{(\gamma)} (0, \,\cdot\,) = \gamma(\,\cdot\,)$, $v^{(\gamma)}$ is
smooth in $(0,\infty)\times[0,1]$ and $(v^{(\gamma)}, F^{(\gamma)})$
satisfies \eqref{4-08b}, \eqref{4-11} in $(0,\infty)\times[0,1]$.
\end{proposition}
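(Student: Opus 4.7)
The plan is to verify the four claims of the proposition in turn: initial value, smoothness, equation \eqref{4-08b}, and equation \eqref{4-11}. The first three follow essentially from algebraic manipulation of \eqref{2-06} together with the regularity of $F^{(\gamma)}$; the verification of \eqref{4-11} is a long but elementary direct computation and constitutes the technical core.

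For the initial value, $F^{(\gamma)}(0,\cdot) = F(\gamma)$ satisfies \eqref{3-01} a.e.\ by Remark \ref{rm02}, so rearranging \eqref{3-01} and plugging into \eqref{2-06} yields $v^{(\gamma)}(0,\cdot) = \gamma(\cdot)$ a.e. For smoothness, Corollary \ref{l02} and Lemma \ref{l01}(c) give $F(\gamma) \in C^1([0,1])$ with Lipschitz derivative, and standard parabolic regularity for the linear Robin problem \eqref{1-06} yields $F^{(\gamma)} \in C^\infty((0,\infty) \times [0,1])$. By Lemma \ref{l16}, for every $T > 0$ there is $c_1(T) > 0$ such that $\nabla F^{(\gamma)}(t,x) \ge c_1(T)$ on $[0,T] \times [0,1]$, so the denominator $(\nabla F^{(\gamma)})^2$ in \eqref{2-06} is bounded away from zero on compact subsets of $(0,\infty) \times [0,1]$, and $v^{(\gamma)}$ inherits smoothness from $F^{(\gamma)}$. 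Equation \eqref{4-08b} is then immediate: its bulk PDE is just \eqref{2-06} solved for $\Delta F^{(\gamma)}$, and its Robin conditions coincide with those of $F^{(\gamma)} = u^{(F(\gamma))}$ as a solution of \eqref{1-06}.

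It remains to establish \eqref{4-11}. Set $R := \log[F^{(\gamma)}/(1-F^{(\gamma)})]$, so that $\nabla R = \nabla F^{(\gamma)}/\sigma(F^{(\gamma)})$ and hence $\nabla[\sigma(F^{(\gamma)})\nabla R] = \Delta F^{(\gamma)} = \partial_t F^{(\gamma)}$. Setting $V := v^{(\gamma)} - F^{(\gamma)} = \sigma(F^{(\gamma)})\, \Delta F^{(\gamma)}/(\nabla F^{(\gamma)})^2$, the interior identity to be shown is
\begin{equation*}
\partial_t V \;-\; \Delta V \;+\; 2\, \nabla\big[\, \sigma(v^{(\gamma)})\, \nabla R\, \big] \;=\; 0 \quad \text{in } (0,\infty)\times(0,1).
\end{equation*}
Using $\partial_t F = \Delta F$ and the commutations $\partial_t \nabla F = \nabla \Delta F$, $\partial_t \Delta F = \Delta^2 F$, one computes $\partial_t V$ directly; expanding $\Delta V$ by the chain rule and $\sigma(v) = \sigma(F+V)$ with $\sigma'' \equiv -2$, the resulting terms cancel after suitable grouping based on the relation $V = \sigma(F)\Delta F/(\nabla F)^2$. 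The main obstacle is the bookkeeping of this computation: derivatives of $F$ up to fourth order appear, and the cancellation is not manifest term by term but only after regrouping using this identity. For the boundary condition at $x = 1$, substitute $\nabla R_t(1) = \nabla F^{(\gamma)}_t(1)/\sigma(F^{(\gamma)}_t(1))$ and the Robin relation $\nabla F^{(\gamma)}_t(1) = B^{-1}[\beta - F^{(\gamma)}_t(1)]$ into $\nabla v_t(1) - 2\sigma(v_t(1))\nabla R_t(1)$; after multiplying through by $\sigma(F^{(\gamma)}_t(1))$ and using $e^{\pm R_t(1)} = [F^{(\gamma)}_t(1)/(1-F^{(\gamma)}_t(1))]^{\pm 1}$ the expression reduces to the first line of \eqref{4-09}, which matches the definition \eqref{5-02a} of $\mf p_{1-\beta,B}(v_t(1), R_t(1))$. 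The boundary at $x = 0$ is symmetric, producing the required factor $-\mf p_{1-\alpha,A}(v_t(0), R_t(0))$.
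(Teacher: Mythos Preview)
Your proposal follows essentially the same strategy as the paper's proof: verify the initial condition and smoothness from the properties of $F^{(\gamma)}$ and Lemma \ref{l16}, read off \eqref{4-08b} as a rewriting of \eqref{2-06}, and check \eqref{4-11} by direct computation split into interior and boundary pieces. The interior computation you outline is the same one the paper performs (the paper organizes it via the identity $\sigma(v)/\sigma(F) = 1 + (1-2F)\Delta F/(\nabla F)^2 - \sigma(F)(\Delta F)^2/(\nabla F)^4$, but this is just your expansion $\sigma(v)=\sigma(F+V)$ written out).

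There is, however, a genuine omission in your boundary argument. To evaluate $\nabla v_t(1)$ you must differentiate \eqref{2-06} in space, which produces a term involving $\nabla^3 F^{(\gamma)}_t(1)$. The Robin relation $\nabla F_t(1) = B^{-1}[\beta - F_t(1)]$ alone does not determine this third derivative; the paper obtains it by \emph{time}-differentiating the Robin condition and then using $\partial_t F^{(\gamma)} = \Delta F^{(\gamma)}$, which yields
\[
\nabla^3 F^{(\gamma)}_t(1) \;=\; -\,\frac{1}{B}\,\Delta F^{(\gamma)}_t(1)\,.
\]
This identity is the hinge on which the boundary cancellation turns: once inserted, $\nabla v_t(1)$ becomes a rational expression in $F_t(1)$, $\nabla F_t(1)$, $\Delta F_t(1)$ only, and the claimed match with \eqref{4-09} follows. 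Your sketch (``substitute $\nabla R_t(1)$ and the Robin relation into $\nabla v_t(1) - 2\sigma(v_t(1))\nabla R_t(1)$'') skips precisely this step and, as written, does not close. The analogous identity $\nabla^3 F^{(\gamma)}_t(0) = A^{-1}\Delta F^{(\gamma)}_t(0)$ is needed at $x=0$.
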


\begin{proof}
Fix $\gamma\in\ms {M}_{\rm ac}$, and let $F(\gamma)$ be the solution
of \eqref{3-01}. By Corollary \ref{l02}, $F(\gamma)$ belongs to
$C^1 ([0,1])$ and there is a constant $c_0\in(0,\infty)$, depending
only on the parameters, such that
$c_0 \le [\nabla F(\gamma)]\, (x) \le c^{-1}_0$ for all $x\in [0,1]$.

Let $F^{(\gamma)}(t)$ be the solution to \eqref{1-06} with initial
condition $F^{(\gamma)}(0) = F(\gamma)$. By Theorem \ref{mt4},
$F^{(\gamma)}$ is smooth in $(0,\infty) \times [0,1]$. By Lemma
\ref{l16}, for every $T>0$, there exists
$c_1 = c_1(A, B, \alpha, \beta, T)\in (0,\infty)$ such that
$c_1 \le ( \nabla F^{(\gamma)})(t, x) \le c^{-1}_1$ for all
$(t,x)\in [0,T]\times [0,1]$. 

Define $v^{(\gamma)}_t$ by equation \eqref{2-06} which we reproduce
here:
\begin{equation}
\label{4-10}
v^{(\gamma)}_t \;=\; F^{(\gamma)}_t \; +\;  \sigma(\, F^{(\gamma)}_t \,)\;
\frac {\Delta F^{(\gamma)}_t}{(\nabla F^{(\gamma)}_t)^2} \;, \quad t\,\ge \, 0\;.
\end{equation}
For $t=0$, we may replace on the right-hand side $F^{(\gamma)}_0$ by
$F (\gamma)$ to get that $v^{(\gamma)}_0 = \gamma$, as claimed.

In view of the regularity and the bounds obtained for $F^{(\gamma)}$
in the previous paragraph, $v^{(\gamma)}$ is smooth in
$(0,\infty)\times [0,1]$. Moreover, by \eqref{4-10}, the pair
$(v^{(\gamma)}, F^{(\gamma)})$ satisfies \eqref{4-08b}.

It remains to show that $v^{(\gamma)}$ fullfils \eqref{4-11}.  

\smallskip\noindent{\it Claim 1: The function $v^{(\gamma)}_t$
complies with the boundary conditions of \eqref{4-11}}. \smallskip

We prove this assertion for $x=1$, the other one being similar. By
\eqref{4-10}, at the boundary, as
$\nabla F^{(\gamma)}_t(1) = [\beta - F^{(\gamma)}_t(1)]/B$, taking a
time-derivative on both sides of the identity yields that
$(\nabla^3 F^{(\gamma)}_t)(1) = - \, (1/B) \, (\Delta F^{(\gamma)}_t)
(1)$ because $\partial_t F^{(\gamma)} = \Delta F^{(\gamma)}$.

Compute, separately, the right and left-hand sides of the right
boundary condition in \eqref{4-11}. We start with the left-hand side
of the identity. Since
$(\nabla^3 F^{(\gamma)}_t)(1) = - (1/B) (\Delta F^{(\gamma)}_t) (1)$,
taking a space derivative in \eqref{4-10} yields that, at $x=1$,
\begin{equation*}
-\, \nabla v^{(\gamma)} \; =\; -\, \nabla F^{(\gamma)} \, - \, (1-2F^{(\gamma)})\,
\frac {\Delta F^{(\gamma)}}{ \nabla F^{(\gamma)}} \;+\;
\frac {\sigma(F^{(\gamma)})}{B} \, \frac {\Delta F^{(\gamma)}}{ (\nabla F^{(\gamma)})^2} 
\;+\; 2\, \sigma(F^{(\gamma)})\, \frac {(\Delta F^{(\gamma)})^2}{ (\nabla F^{(\gamma)})^3}\;\cdot
\end{equation*}
On the other hand, by \eqref{4-10} and a straightforward computation,
\begin{equation*}
2\, \frac{\sigma(v^{(\gamma)})}{\sigma(F^{(\gamma)})} \, \nabla F^{(\gamma)}\;=\;
2\, \nabla F^{(\gamma)} \, \Big\{\, 1\,+\, (1-2F^{(\gamma)}) \,
\frac {\Delta F^{(\gamma)}}{ (\nabla F^{(\gamma)})^2} \,-\,
\sigma(F^{(\gamma)}) \, \frac {(\Delta F^{(\gamma)})^2}{ (\nabla F^{(\gamma)})^4} \,\Big\}\;.
\end{equation*}
Summing the previous two identities yields that, at $x=1$,
\begin{equation*}
-\, \nabla v^{(\gamma)} \; + \;
2\, \frac{\sigma(v^{(\gamma)})}{\sigma(F^{(\gamma)})} \, \nabla F^{(\gamma)}
\;=\;
\nabla F^{(\gamma)} \;+\; \{\, \beta \,-\, 2 \beta F^{(\gamma)}
\, +\, (F^{(\gamma)})^2 \, \}\,
\frac{\Delta F^{(\gamma)}}{B\, (\nabla F^{(\gamma)})^2}\;\cdot
\end{equation*}
A simple calculation gives that the right-hand side of the right
boundary condition in \eqref{4-11} is also equal to this
quantity. This proves Claim 1.

\smallskip\noindent{\it Claim 2: The function $v^{(\gamma)}_t$
fullfils \eqref{4-11} in the interior}. \smallskip

The proof of this claim is
identical to the one presented in \cite[Appendix B]{BDGJL} and in
\cite[Lemma 5.5]{BDGJL2003}. We reproduce it here in sake of
completeness.

From \eqref{4-10},
\begin{equation*}
\frac{ v^{(\gamma)}(1-v^{(\gamma)})}{ F^{(\gamma)}(1-F^{(\gamma)})} = 
1 + (1-2F^{(\gamma)}) \frac{\Delta F^{(\gamma)}}{ \big(\nabla F^{(\gamma)}\big)^2} 
- F^{(\gamma)}(1-F^{(\gamma)}) \frac{\big(\Delta F^{(\gamma)}\big)^2}{\big(\nabla
F^{(\gamma)}\big)^4}\;\cdot 
\end{equation*}
As $F^{(\gamma)}$ solves the heat equation \eqref{1-06}, a long computation
yields that 
\begin{equation*}
\big(\,  \partial_t \,-\, \Delta \,\big) 
\, \Big(\, \sigma( F^{(\gamma)}) \, \frac {\Delta F^{(\gamma)}}
{\big(\nabla F^{(\gamma)}\big)^2} \,\Big) 
\;=\; - \, 2 \, \nabla \Big( \frac{\sigma(v^{(\gamma)})}{ \sigma(F^{(\gamma)})}
\, \nabla F^{(\gamma)} \Big)\;.
\end{equation*}
By \eqref{4-10}, $v^{(\gamma)}$ satisfies the differential equation in
\eqref{4-11}, as claimed.
\end{proof}

\begin{lemma}
\label{l10b}
Under the hypotheses of Proposition \ref{l10}, assume that $\gamma$
belongs to $C^2([0,1])$.  Then, $v^{(\gamma)}$ belongs to
$C^{1,2} ([0,\infty)\times[0,1]) \cap C ([0,\infty);\ms M_{\rm ac})$.
\end{lemma}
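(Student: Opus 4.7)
The plan is to deduce the claimed regularity of $v^{(\gamma)}$ from the explicit representation \eqref{2-06} by first upgrading the regularity of $F^{(\gamma)}$ up to $t=0$, and then reading off the properties of $v^{(\gamma)}$ from its formula in terms of $F^{(\gamma)}$ and its derivatives. Since $\gamma \in C^2([0,1])$, Proposition \ref{p01}(i) gives $F(\gamma) \in C^2([0,1]) \cap \mc F$; by Corollary \ref{l02} and Lemma \ref{l16}, $F(\gamma)\in \mc B$, so the Robin boundary conditions of \eqref{1-06} are automatically satisfied by the initial datum $F^{(\gamma)}(0,\cdot) = F(\gamma)$, giving the zero-order compatibility required for a classical solution of the heat equation.

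Next, I would apply the parabolic regularity theory of Appendix \ref{sec05} (in particular Theorem \ref{mt4}) to the solution $F^{(\gamma)}$ of \eqref{1-06} with $C^2$ initial datum satisfying the boundary conditions, to obtain that $F^{(\gamma)}$ is smooth on $(0,\infty) \times [0,1]$ and that $F^{(\gamma)}(t) \to F(\gamma)$ in $C^2([0,1])$ as $t \to 0^+$. Together with the uniform bound $\nabla F^{(\gamma)}(t,x) \ge c_1 > 0$ from Lemma \ref{l16}, the formula \eqref{2-06} then yields continuity of $v^{(\gamma)}$ on $[0,\infty) \times [0,1]$ with $v^{(\gamma)}(0,\cdot) = \gamma$. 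To upgrade to $C^{1,2}$, I would substitute $\Delta F^{(\gamma)} = \partial_t F^{(\gamma)}$ from the heat equation into each spatial derivative of \eqref{2-06}, trading spatial derivatives of order larger than two for temporal ones, which are controlled by the preceding continuity of $F^{(\gamma)}(t)$ in $C^2([0,1])$ combined with the interior smoothness established in Proposition \ref{l10}.

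For the second membership, continuity $v^{(\gamma)} \in C([0,\infty); \ms M_{\rm ac})$ would follow from the already-established continuity in $C^0$, once we verify $0 \le v^{(\gamma)}(t) \le 1$ for all $t \ge 0$. The latter follows from a maximum-principle argument applied to the parabolic equation \eqref{4-11} satisfied by $v^{(\gamma)}$, with $F^{(\gamma)}$ treated as a smooth and bounded-below coefficient, exploiting that $\gamma$ takes values in $[0,1]$ and that the boundary conditions in \eqref{4-11} are of mixed Robin type with the right signs. The principal obstacle I anticipate lies at the corner $\{t=0\}\times\{0,1\}$: the first-order compatibility condition $\nabla \Delta F(\gamma) = A^{-1}\Delta F(\gamma)$ at $x=0$ (and its analogue at $x=1$) is not automatic from \eqref{3-01}, so classical $C^{1,2}$ regularity of $F^{(\gamma)}$ up to $t=0$ cannot be expected unconditionally, and the argument must rely on one-sided $C^2$ continuity in time of $F^{(\gamma)}$ at $t=0$ combined with interior smoothness for $t>0$, transferred to $v^{(\gamma)}$ through the algebraic formula \eqref{2-06} rather than directly through a parabolic estimate for $v^{(\gamma)}$.
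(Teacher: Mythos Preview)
Your regularity argument for $v^{(\gamma)}\in C^{1,2}$ is close in spirit to the paper's, but the paper organizes it more cleanly: from $\gamma\in C^2$ one bootstraps \eqref{3-01} (via Remark~\ref{rm02}) to get $\Delta F(\gamma)\in C^2([0,1])$, and then observes that $\Delta F^{(\gamma)}(t)$ is \emph{itself} the solution to the homogeneous Robin problem \eqref{1-06h} with this initial datum (differentiate the boundary conditions of \eqref{1-06} in $t$). Applying Theorem~\ref{mt6} to $\Delta F^{(\gamma)}$ directly gives its $C^{1,2}$ regularity on $[0,\infty)\times[0,1]$, and \eqref{4-10} then yields the claim for $v^{(\gamma)}$. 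This sidesteps the need to trade higher spatial derivatives for temporal ones and the corner-compatibility issue you flag: rather than demanding $F^{(\gamma)}\in C^{1,4}$ up to $t=0$, one launches a fresh initial-value problem for $\Delta F^{(\gamma)}$ with $C^2$ data.

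The more substantial divergence is in the bound $0\le v^{(\gamma)}\le 1$. A direct maximum-principle argument for \eqref{4-11} is delicate: the drift term carries the nonlinear factor $\sigma(v)=v(1-v)$, which changes sign outside $[0,1]$, and the boundary conditions are nonlinear in $v$ (through $\mf p_{1-\varrho,D}$), so ``Robin type with the right signs'' is not evident a priori. The paper avoids this entirely. It notes that the smooth $v^{(\gamma)}$ is in particular a weak solution of \eqref{4-11} in the sense of Definition~\ref{d03}, and then invokes Theorem~\ref{mt5}: that equation admits a weak solution with values in $[0,1]$ (constructed via the hydrodynamic limit of a weakly asymmetric exclusion process), and weak solutions are unique. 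Uniqueness forces $v^{(\gamma)}$ to coincide with this $[0,1]$-valued solution. This soft identification argument is what delivers the $\ms M_{\rm ac}$-membership without any comparison or barrier function.
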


\begin{proof}
Assume that $\gamma$ belongs to $C^2([0,1])$. By Remark \ref{rm02} and
\eqref{3-01}, $\Delta F (\gamma)$ belongs to $C^2([0,1])$.  Taking
time derivatives in \eqref{1-06} yields that $\Delta F^{(\gamma)} (t)$
is the solution to \eqref{1-06h} with initial condition
$\Delta F (\gamma)$. By Theorem \ref{mt6} (a),
$(t,x) \mapsto \Delta F^{(\gamma)} (t,x)$ belongs to
$C^{1,2} ([0,\infty)\times[0,1])$.  Therefore, by \eqref{4-10},
$v^{(\gamma)}$ belongs to $C^{1,2} ([0,\infty)\times[0,1])$

\smallskip\noindent{\it Claim 1: The function $v^{(\gamma)}_t$ belongs to $C(\bb
R_+, \ms M_{\rm ac})$}. \smallskip

We have to show that $0\le v^{(\gamma)}(t,x) \le 1$ for all
$(t,x)$. In view of \eqref{4-11d}, equation \eqref{4-11} describes the
macroscopic evolution of the density for weakly asymmetric boundary
driven exclusion processes with weak boundary interaction. The drift
is given by $\nabla R_t$ where
$R_t = \log [ F^{(\gamma)}_t/(1-F^{(\gamma)}_t)]$. By the first part
of the proof and Lemma \ref{l16}, $R_t$ belongs to
$C^{1,2} ([0,\infty)\times[0,1])$.

As $v^{(\gamma)}_0 = \gamma$ and $0\le \gamma \le 1$, by the
hydrodynamic limit of theses systems, derived in \cite{FGLN2021}, this
equation has a weak solution taking values in the interval $[0,1]$,
cf. Theorem \ref{mt5}. Since $v^{(\gamma)}$ belongs to
$C^{1,2} ([0,\infty)\times[0,1])$ and solves \eqref{4-11} pointwisely,
it is a weak solution to equation \eqref{4-11} in the sense of
Definition \ref{d03}. Therefore, by the uniqueness of weak solutions
of \eqref{4-11}, Theorem \ref{mt5}, $v^{(\gamma)}$ coincides with
solution obtained in the proof of the hydrodynamic limit which takes
values in $[0,1]$. This completes the proof of the lemma.
\end{proof}


\begin{lemma}
\label{l11}
Under the hypotheses of Proposition \ref{l10},
assume that $\delta \le \gamma(x) \le 1-\delta$ a.e.\ for some
$\delta>0$. Then, there exists
$\delta'=\delta'(A, B, \alpha, \beta,\delta) \in (0,1)$, such that
$\delta' \le v^{(\gamma)} (t,x) \le 1-\delta'$ for all
$(t,x) \in (0,\infty) \times [0,1]$.
\end{lemma}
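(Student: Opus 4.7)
The plan is to treat the lower bound (the upper bound follows by the $v \mapsto 1-v$, $(\alpha,\beta) \mapsto (1-\alpha, 1-\beta)$ symmetry of \eqref{4-11}--\eqref{4-08b}) by combining two estimates: a barrier-based maximum-principle bound $v^{(\gamma)}(t,x) \ge \eta e^{-\lambda t}$ valid on all of $[0,\infty) \times [0,1]$, and a long-time estimate $v^{(\gamma)}(t,x) \approx \bar\rho(x)$ coming from the exponential relaxation of $F^{(\gamma)}$ under the heat flow with Robin boundary conditions. The two bounds overlap at a finite $T^* = T^*(A,B,\alpha,\beta)$, yielding a uniform $\delta' > 0$.

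First I would collect uniform bounds on the coefficients of \eqref{4-11}. By Lemma \ref{l16}, $F:= F^{(\gamma)} \in [\alpha+Ap, \beta-Bp]$ and $\nabla F \in [c_1, c_1^{-1}]$ on $\mathbb{R}_+ \times [0,1]$. Differentiating \eqref{1-06} in time shows that $G := \Delta F^{(\gamma)}$ solves the heat equation with homogeneous Robin conditions $\nabla G(t,0) = A^{-1} G(t,0)$ and $\nabla G(t,1) = -B^{-1} G(t,1)$; Hopf's lemma then forbids a positive maximum or negative minimum of $G$ from lying at the spatial boundary, so extrema occur at $t=0$ and $\|\Delta F^{(\gamma)}(t)\|_\infty \le \|\Delta F(\gamma)\|_\infty$. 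The latter is uniformly bounded over $\ms M_{\rm ac}$ by the identity $\Delta F(\gamma) = (\gamma - F(\gamma))(\nabla F(\gamma))^2/[F(\gamma)(1-F(\gamma))]$ (Remark \ref{rm02}) and the bounds of Corollary \ref{l02}. In particular $R := \log[F/(1-F)]$, $\nabla R$ and $\Delta R$ are uniformly bounded. The same Robin Laplacian has a positive spectral gap $\mu > 0$ (its quadratic form $\int_0^1 (u')^2\,dx + A^{-1}u(0)^2 + B^{-1}u(1)^2$ is coercive on $H^1$), giving $\|\Delta F^{(\gamma)}(t)\|_\infty + \|F^{(\gamma)}(t) - \bar\rho\|_\infty \le C_1 e^{-\mu t}$ uniformly in $\gamma \in \ms M_{\rm ac}$.

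Next I would build the barrier. In the form \eqref{4-11d} of Remark \ref{rm5}, the boundedness of $R$ forces $\alpha^*_t, \beta^*_t \in [\delta^*, 1-\delta^*]$ for some $\delta^* = \delta^*(A, B, \alpha, \beta) > 0$. Set $\eta := \min(\delta, \delta^*)$, $\lambda := 2\|\Delta R\|_\infty$, and $\phi(t) := \eta e^{-\lambda t}$. Writing \eqref{4-11} as
\begin{equation*}
\partial_t v = \Delta v + (4v-2)(\nabla R)(\nabla v) - 2v(1-v)\Delta R,
\end{equation*}
a direct substitution gives
\begin{equation*}
\partial_t \phi - \Delta \phi - (4\phi-2)(\nabla R)(\nabla \phi) + 2\phi(1-\phi)\Delta R = \phi\bigl[-\lambda + 2(1-\phi)\Delta R\bigr] \le 0,
\end{equation*}
so $\phi$ is a classical interior subsolution. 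At the Robin boundary $x=0$, the subsolution inequality
\begin{equation*}
\nabla \phi - 2\sigma(\phi)\nabla R + \tfrac{1}{A^*_t}(\phi - \alpha^*_t) \le 0
\end{equation*}
reduces, since $\nabla \phi = 0$ and $\nabla R > 0$, to $(\phi - \alpha^*_t)/A^*_t \le 2\phi(1-\phi)\nabla R$, which is immediate from $\phi \le \eta \le \delta^* \le \alpha^*_t$; the analogous inequality holds at $x=1$. As $\phi(0) = \eta \le \delta \le \gamma = v(0,\cdot)$, the comparison principle for the quasilinear parabolic problem \eqref{4-11} (applied to $w := v^{(\gamma)} - \phi$, whose negative part satisfies a Gronwall estimate via the uniformly bounded coefficients) yields $v^{(\gamma)}(t,x) \ge \eta e^{-\lambda t}$ for all $(t,x) \in [0,\infty) \times [0,1]$.

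Finally, the long-time estimate uses the explicit formula \eqref{4-10} together with Paragraph 1:
\begin{equation*}
|v^{(\gamma)}(t,x) - \bar\rho(x)| \le \|F^{(\gamma)}(t) - \bar\rho\|_\infty + \frac{\sigma(F^{(\gamma)})}{(\nabla F^{(\gamma)})^2}\,\|\Delta F^{(\gamma)}(t)\|_\infty \le C_2 e^{-\mu t},
\end{equation*}
uniformly in $\gamma$. Since $\bar\rho(x) \ge \alpha$, choosing $T^* := \mu^{-1}\log(2C_2/\alpha)$ yields $v^{(\gamma)}(t,x) \ge \alpha/2$ for $t \ge T^*$. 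Setting $\delta' := \min(\eta e^{-\lambda T^*}, \alpha/2) \in (0,1)$, which depends only on $(A, B, \alpha, \beta, \delta)$, delivers the claimed lower bound on $(0,\infty) \times [0,1]$. The main obstacle is the boundary-comparison step: the Robin coefficient $-1/A^*_t$ in front of $v$ has the ``unfavorable'' sign for a generic Robin max principle, so one must exploit the specific structure of the barrier (namely $\phi \le \alpha^*_t, \beta^*_t$, making the reservoir flux strictly inward) to close the argument.
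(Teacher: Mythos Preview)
Your overall strategy --- a decaying spatially-constant barrier $\phi(t)=\eta e^{-\lambda t}$ for short times, glued to the exponential relaxation $v^{(\gamma)}(t)\to\bar\rho$ for large times --- is sound and genuinely different from the paper's proof. The paper argues via a direct invariant-region argument: at any interior local maximum of $v^{(\gamma)}(t,\cdot)$ exceeding $1-\alpha$ one checks, using \eqref{4-10} and \eqref{4-11}, that $\partial_t v^{(\gamma)}<0$; at $x=1$ one shows that whenever $v^{(\gamma)}_t(1)$ exceeds an explicit threshold $\nu_r<1$ (resp.\ drops below $\mu_r>0$) the boundary identity forces $\nabla v^{(\gamma)}_t(1)<0$ (resp.\ $>0$), so a boundary extremum is impossible; and symmetrically at $x=0$. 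This delivers the time-uniform bound in one step, without any short-time/long-time split and without invoking the spectral gap. Your route trades that algebraic boundary analysis for more general parabolic machinery; either approach works, but the paper's is shorter here.

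There is, however, a real gap in your boundary step at $x=0$. The boundary condition \eqref{4-11d} reads $\nabla v-2\sigma(v)\nabla R-\tfrac{1}{A^*_t}(v-\alpha^*_t)=0$, and the correct \emph{lower}-barrier inequality at a left endpoint is
\[
\nabla\phi \;-\; 2\sigma(\phi)\,\nabla R \;-\; \tfrac{1}{A^*_t}\,(\phi-\alpha^*_t)\;\ge\;0\,,
\]
not the inequality you wrote (you have both the sign of the Robin term and the direction of the inequality reversed). With $\nabla\phi=0$ this becomes $\tfrac{1}{A^*_t}(\alpha^*_t-\phi)\ge 2\sigma(\phi)\,\nabla R_t(0)$: the reservoir influx must dominate the rightward drift, and this is \emph{not} ``immediate'' from $\phi\le\alpha^*_t$ because the right-hand side is strictly positive. (Incidentally, the Robin coefficient here is $1/A^*_t>0$, the \emph{favourable} sign, so your closing remark is misplaced.) The repair is easy: since $\sigma(\phi)\le\phi$ and $A^*_t$, $\nabla R_t(0)$ are uniformly bounded by constants depending only on $(A,B,\alpha,\beta)$, choosing
\[
\eta \;\le\; \min\Big\{\,\delta,\ \frac{\delta^*}{1+2\,(\sup_t A^*_t)\,\|\nabla R\|_\infty}\,\Big\}
\]
makes the correct inequality hold for all $t\ge 0$. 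At $x=1$ the analogous (correct) subsolution inequality \emph{is} automatic, since there both the drift $2\sigma(\phi)\nabla R>0$ and the reservoir term $\tfrac{1}{B^*_t}(\beta^*_t-\phi)>0$ push the right way. With this smaller $\eta$ your comparison argument goes through and the rest of the proof is fine.
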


\begin{proof}
The proof is divided in several assertions. Fix $t>0$.

\smallskip\noindent {\it Claim 1:} If $v^{(\gamma)}(t, \cdot)$ has a
local maximum at $x_0\in (0,1)$ and $v^{(\gamma)}(t,x_0) > 1-\alpha$,
then $\partial_t v^{(\gamma)} (t,x_0)<0$. \smallskip

Assume that $v^{(\gamma)}(t, \cdot)$ has a local maximum at
$x_0\in (0,1)$. Since $v^{(\gamma)}$ is a smooth solution to
\eqref{4-08}, $(\nabla v^{(\gamma)})(t,x_0)=0$. By \eqref{4-10} and a
straightforward computation,
$\Delta \log \{F^{(\gamma)} /(1-F^{(\gamma)})\} = (v^{(\gamma)} +
F^{(\gamma)} -1) \, (\nabla
F^{(\gamma)})^2/\sigma(F^{(\gamma)})^2$. Therefore, by \eqref{4-11},
at the point $(t,x_0)$,
\begin{equation*}
\partial_t v^{(\gamma)} \;=\; \Delta v^{(\gamma)} 
\;-\; 2\, \sigma (v^{(\gamma)}) \, (v^{(\gamma)} + F^{(\gamma)} -1) \,
\frac{ \, (\nabla F^{(\gamma)})^2}{\sigma(F^{(\gamma)})^2}  \;\cdot
\end{equation*}
As $x_0$ is a local maximum, $\Delta v^{(\gamma)} \le 0$. On the other
hand, since $v^{(\gamma)}(t,x_0) > 1-\alpha$, and, by Lemma \ref{l16},
$\alpha \le F^{(\gamma)}$, $v^{(\gamma)} + F^{(\gamma)} -1 >0$ so that
$\partial_t v^{(\gamma)} <0$, which proves the claim.

The same argument shows that $(\partial_t v^{(\gamma)})(t,x_1) >0$ if
$x_1\in (0,1)$ is a minimum of $v^{(\gamma)}(t,\cdot)$ and
$ v^{(\gamma)} (t,x_1) < 1-\beta$.

We turn to the possibility that the maximum is attained at the
boundary.  By Lemma \ref{l16}, $F^{(\gamma)}$ takes value in the
interval $[\alpha + pA , \beta -pB]$.  Let
\begin{equation*}
\nu \;: =\; \max_{\alpha + pA \le \varphi \le \beta -pB}
\frac{(1-\beta) \varphi^2}{\beta - 2 \beta \varphi + \varphi^2} \;<\;
1\;. 
\end{equation*}
As $\nu < 1$, there exists $a>0$ such that $\nu/(1-2a)<1$ and $aBp<1$.

\smallskip\noindent {\it Claim 2:} If
$v^{(\gamma)}_t(1) \,>\, {\color{bblue} \nu_r \, :=\, \max\{
\nu/(1-2a) \,,\, 1 - a B p\}}$, then $\nabla
v^{(\gamma)}_t(1)<0$. \smallskip

Since $v^{(\gamma)}$ solves equation \eqref{4-11}, in view of the
definition of $R$, the first equation in \eqref{4-09} holds with
$v^{(\gamma)}$, $F^{(\gamma)}$ in place of $v$, $F$, respectively. At
the boundary,
$\nabla F^{(\gamma)}_t(1) = (1/B) [ \beta -
F^{(\gamma)}_t(1)]$. Therefore,
\begin{equation*}
\begin{aligned}
\nabla v^{(\gamma)}_t \, &= \, \frac{1}{B\, \sigma(F^{(\gamma)}_t)} \,
\Big\{  2\, \sigma(v^{(\gamma)}_t) \, [\beta - F^{(\gamma)}_t]
\,+\, (1-\beta) \, (1-v^{(\gamma)}_t) \, [F^{(\gamma)}_t]^2
\,-\, \beta\, v^{(\gamma)}_t\, [\, 1- F^{(\gamma)}_t\,]^2  \, \Big\} \\
& = \, \frac{- 1}{B\, \sigma(F^{(\gamma)}_t)} \, \Big\{\,
v^{(\gamma)}_t \, \Big[\, \{\, \beta - 2\beta F^{(\gamma)}_t + [F^{(\gamma)}_t]^2\,\}
\,-\, 2\, (1-v^{(\gamma)}_t)\, (\beta-F^{(\gamma)}_t)\, \Big]\, -\,
(1-\beta) \, [F^{(\gamma)}_t]^2\, \Big\} \;.
\end{aligned}
\end{equation*}
In these formulas, we wrote $v^{(\gamma)}_t$, $F^{(\gamma)}_t$ for
$v^{(\gamma)}_t(1)$, $F^{(\gamma)}_t(1)$, respectively. As
$v^{(\gamma)}_t(1) > 1- a\, B\, p$ and $Bp \le \beta-F^{(\gamma)}_t$ ,
the last term is bounded by
\begin{equation*}
\begin{aligned}
& \frac{- 1}{B\, \sigma(F^{(\gamma)}_t)} \, \Big\{\,
v^{(\gamma)}_t \, \Big[\, \{\, \beta - 2\beta F^{(\gamma)}_t
+ [F^{(\gamma)}_t]^2\,\}
\,-\, 2\, a \, (\beta-F^{(\gamma)}_t)^2\, \Big]\, -\,
(1-\beta) \, [F^{(\gamma)}_t]^2\, \Big\} \\
& \quad \le\;
\frac{- 1}{B\, \sigma(F^{(\gamma)}_t)} \, \Big\{\,
(1-2a) \, v^{(\gamma)}_t\, \{\, \beta - 2\beta F^{(\gamma)}_t
+ [F^{(\gamma)}_t]^2 \, \} \, -\,
(1-\beta) \, [F^{(\gamma)}_t]^2 \, \Big\}\;.
\end{aligned}
\end{equation*}
This expression is negative by definition of $\nu$ and because we
assumed that $v^{(\gamma)}_t(1) > \nu/(1-2a)$. This proves Claim
2. \smallskip

Let
\begin{equation*}
\mu_r \;: =\; \min_{\alpha + pA \le \varphi \le \beta -pB}
\frac{(1-\beta) \varphi^2}{\beta - 2 \beta \varphi + \varphi^2} \;>\;
0\;. 
\end{equation*}

\smallskip\noindent {\it Claim 3:} If $v^{(\gamma)}_t(1) \,<\, \mu_r$,
then $\nabla v^{(\gamma)}_t(1)>0$. \smallskip

Recall the formula for $\nabla v^{(\gamma)}_t(1)$ presented at the
beginning of the proof of Claim 2. Since
$2(1-v^{(\gamma)}_t)\, (\beta-F^{(\gamma)}_t)\,>\,0$ and
$v^{(\gamma)}_t(1) \,<\, \mu_r$,
\begin{equation*}
\nabla v^{(\gamma)}_t \, > \,
\frac{1}{B\, \sigma(F^{(\gamma)}_t)} \, \Big\{\,
(1-\beta) \, [F^{(\gamma)}_t]^2 \, -\,
\mu_r \, \{\, \beta - 2\beta F^{(\gamma)}_t + [F^{(\gamma)}_t]^2\,\}
\, \Big\} \;.
\end{equation*}
As $F^{(\gamma)}_t$ takes values in the interval
$[\alpha + pA , \beta -pB]$, the right-hand side is non-negative by
definition of $\mu_r$. This proves Claim 3. \smallskip

Similarly, one can be prove the existence of $\mu_l >0$ and $\nu_l<1$
such that $\nabla v^{(\gamma)}_t(0)>0$ if
$v^{(\gamma)}_t(0) \,>\, \nu_l$; $\nabla v^{(\gamma)}_t(0)<0$ if
$v^{(\gamma)}_t(0) \,<\, \mu_l$.

This result together with Claims 1, 2, 3, yield that
$\min\{ \mu_l \,,\, \mu_r \,,\, \delta \,,\, 1-\beta \,\} \,\le\,
v^{(\gamma)} (t,x) \,\le\, \max\{ \nu_l \,,\, \nu_r \,,\, 1-\delta
\,,\, 1-\alpha \,\}$ for all $(t,x) \in \bb R_+ \times [0,1]$, which
concludes the proof of the lemma.
\end{proof}

Next result states that the solution to \eqref{4-08}, as constructed
in Proposition \ref{l10}, converges to $\bar\rho$, as $t\to\infty$,
uniformly with respect to the initial condition $\gamma$.

\begin{lemma}
\label{l12}
Let $v^{(\gamma)}$, $\gamma\in\ms M_{\rm ac}$, be given by
\eqref{4-10}.  Then,
\begin{equation*}
\lim_{t\rightarrow \infty}  \sup_{\gamma \in\ms {M}_{\rm ac}} \; 
\big\Vert \, v^{(\gamma)} (t) -\bar\rho \,\big\Vert_\infty \; =\; 0 \; .
\end{equation*}
\end{lemma}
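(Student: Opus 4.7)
The plan is to exploit the fact, already used in Proposition \ref{l10}, that $F^{(\gamma)}_t = u^{(F(\gamma))}(t)$ solves the \emph{linear} heat equation \eqref{1-06}, whose unique stationary solution for the given Robin boundary conditions is $\bar\rho$. Setting $w^{(\gamma)}(t) := F^{(\gamma)}(t) - \bar\rho$, the difference $w^{(\gamma)}$ satisfies the heat equation with the homogeneous Robin conditions $\nabla w(t,0) = A^{-1}w(t,0)$ and $\nabla w(t,1) = -\,B^{-1}w(t,1)$. From the representation \eqref{2-06},
\begin{equation*}
v^{(\gamma)}(t) \,-\, \bar\rho \;=\; w^{(\gamma)}(t) \;+\; \sigma\bigl(F^{(\gamma)}(t)\bigr)\, \frac{\Delta F^{(\gamma)}(t)}{\bigl(\nabla F^{(\gamma)}(t)\bigr)^2}\;,
\end{equation*}
so the lemma reduces to showing that $F^{(\gamma)}(t) \to \bar\rho$ in $C^2([0,1])$ uniformly in $\gamma \in \ms M_{\rm ac}$: such convergence sends the first term to $0$ in $L^\infty$, makes $\Delta F^{(\gamma)}(t) \to \Delta \bar\rho = 0$ uniformly, and keeps $\nabla F^{(\gamma)}(t)$ bounded away from zero, since $\nabla \bar\rho = (\beta-\alpha)/(1+A+B) > 0$.

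The first step is exponential $\ms L^2$ decay. Multiplying $\partial_t w = \Delta w$ by $w$ and integrating by parts, using the homogeneous Robin conditions to handle the boundary terms, yields
\begin{equation*}
\frac{d}{dt}\,\|w^{(\gamma)}(t)\|_{\ms L^2}^2 \;=\; -\,2\,\|\nabla w^{(\gamma)}(t)\|_{\ms L^2}^2 \;-\; \frac{2}{A}\, w^{(\gamma)}(t,0)^2 \;-\; \frac{2}{B}\, w^{(\gamma)}(t,1)^2 \;\le\; -\,2\,\lambda_1\,\|w^{(\gamma)}(t)\|_{\ms L^2}^2,
\end{equation*}
where $\lambda_1 > 0$ is the smallest eigenvalue of $-\Delta$ with these Robin conditions. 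Strict positivity of $\lambda_1$ follows because the quadratic form $Q(w) := \|\nabla w\|_{\ms L^2}^2 + A^{-1}w(0)^2 + B^{-1}w(1)^2$ is coercive on $\mc H^1([0,1])$ and vanishes only at $w\equiv 0$. Hence $\|w^{(\gamma)}(t)\|_{\ms L^2} \le e^{-\lambda_1 t}\,\|w^{(\gamma)}(0)\|_{\ms L^2}$, and this bound is uniform in $\gamma$ because $\|F(\gamma)\|_\infty$ is uniformly bounded by \eqref{3-07}.

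The second step upgrades $\ms L^2$ decay to $C^2$ decay via the parabolic smoothing of the self-adjoint Robin heat semigroup $e^{t\Delta_R}$. Expanding in its orthonormal $C^\infty$ eigenbasis $\{\phi_n\}$ with eigenvalues $0 < \lambda_1 \le \lambda_2 \le \cdots \to \infty$, and using the one-dimensional Sobolev embedding $\mc H^3([0,1]) \hookrightarrow C^2([0,1])$, one has for each fixed $\tau > 0$ a finite constant $C_\tau$ such that $\|e^{\tau\Delta_R} f\|_{C^2} \le C_\tau\,\|f\|_{\ms L^2}$ for all $f\in \ms L^2([0,1])$; indeed $\|e^{\tau\Delta_R} f\|_{\mc H^3}^2 = \sum_n (1+\lambda_n)^3 e^{-2\lambda_n\tau}\,\langle f,\phi_n\rangle^2$ is dominated by $\bigl(\sup_n (1+\lambda_n)^3 e^{-2\lambda_n\tau}\bigr)\,\|f\|_{\ms L^2}^2$, with the supremum finite since $\lambda_n \to \infty$. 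Applied at each time $t$, this produces $\|w^{(\gamma)}(t+\tau)\|_{C^2} \le C_\tau\, e^{-\lambda_1 t}\, \|w^{(\gamma)}(0)\|_{\ms L^2}$ uniformly in $\gamma$, completing the argument sketched above. The only real technical point is this smoothing step, which is standard on the bounded interval $[0,1]$; uniformity in $\gamma$ is then automatic from the uniform $\ms L^2$ bound on the initial data.
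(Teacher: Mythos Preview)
Your proposal is correct and follows the same overall reduction as the paper: both set $w^{(\gamma)}(t)=F^{(\gamma)}(t)-\bar\rho$ (the paper calls it $\Psi^{(\gamma)}$), observe that it solves the homogeneous Robin heat equation \eqref{1-06h}, and then argue that $w^{(\gamma)}(t)\to 0$ in $C^2([0,1])$ uniformly in $\gamma$, which by \eqref{4-10} gives the claim. The difference lies in how the $C^2$ decay is obtained. You use an $\ms L^2$ energy estimate plus parabolic smoothing $\ms L^2\to C^2$ of the semigroup $e^{\tau\Delta_R}$. The paper instead notices that $\Delta\Psi^{(\gamma)}$ \emph{also} solves \eqref{1-06h} (differentiate in time and use that the Robin conditions are linear and time-independent), with initial data $\Delta F(\gamma)$ uniformly bounded by \eqref{3-01} and the a priori bounds on $\mc B$; it then reads off $\|\Psi^{(\gamma)}(t)\|_\infty\to 0$ and $\|\Delta\Psi^{(\gamma)}(t)\|_\infty\to 0$ directly from the eigenfunction expansion, and recovers $\nabla\Psi^{(\gamma)}$ by integrating $\Delta\Psi^{(\gamma)}$ from the boundary. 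Your route needs only the uniform $\ms L^2$ bound on $w^{(\gamma)}(0)$ and is the more generic PDE argument; the paper's route is more elementary but exploits the extra structural fact that $\Delta F(\gamma)$ is uniformly bounded.

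One small technical correction: the identity $\|e^{\tau\Delta_R}f\|_{\mc H^3}^2=\sum_n(1+\lambda_n)^3e^{-2\lambda_n\tau}\langle f,\phi_n\rangle^2$ is not literally the standard $\mc H^3$ norm; the right-hand side is the graph norm of $(I-\Delta_R)^{3/2}$, and matching it to $\mc H^3$ requires compatibility conditions. It is cleaner to bypass this and bound $\|(e^{\tau\Delta_R}f)''\|_\infty$ directly: since $\phi_n''=-\lambda_n\phi_n$ and $\|\phi_n\|_\infty\le C_0$ by \eqref{6-11}, Cauchy--Schwarz gives $\|(e^{\tau\Delta_R}f)''\|_\infty\le C_0\bigl(\sum_n\lambda_n^2e^{-2\lambda_n\tau}\bigr)^{1/2}\|f\|_2$, and the sum converges by \eqref{6-12}. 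The same works for the first derivative, so your smoothing step goes through.
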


\begin{proof}
Write the solution $F^{(\gamma)}(t)$ of \eqref{1-06} as
$F^{(\gamma)}(t,x) = \bar\rho(x) + \Psi^{(\gamma)} (t,x)$. Then,
$\Psi^{(\gamma)}$ solves the equation \eqref{1-06h} with
$\phi = F( \gamma ) \,-\, \bar\rho$.  In particular, by Theorem
\ref{mt6}, $\Psi^{(\gamma)}(t)$ can be represented as
$\Psi^{(\gamma)}(t) = P^{(R)}_t \Psi^{(\gamma)}(0)$. Since
$\Psi^{(\gamma)}(0)= F(\gamma) -\bar\rho$ and the solution $F(\gamma)$
of \eqref{3-01} as well as $\bar \rho$ are contained in the interval
$[\alpha, \beta]$, we have that
$\|\Psi^{(\gamma)}(0)\|_\infty \le \beta -\alpha <1$, uniformly over
$\gamma\in\ms M_{\rm ac}$. Therefore, expressing
$\Psi^{(\gamma)}(t) = P^{(R)}_t \Psi^{(\gamma)}(0)$ in the basis
$(f_j : j\ge 1)$ by standard arguments (see \cite[Corollary 2 of
Section 4.4]{zeid108}),
\begin{equation*}
\lim_{t\to\infty} \sup_{\gamma \in\ms M_{\rm ac}} \; 
\| \Psi^{(\gamma)}(t) \|_\infty  \;=\; 0 \;.
\end{equation*}

Taking a time derivative at the boundary yields that
$\Delta \Psi^{(\gamma)} (t)$ also solves equation \eqref{1-06h}. By
\eqref{3-01}, as $F(\gamma)$ belongs to $\mc B$,
$\Vert \Delta \Psi^{(\gamma)}(0)\Vert_\infty \le
C_0(A,B,\alpha,\beta)$. Thus, as $\Delta \bar\rho=0$, by the same
argument,
\begin{equation*}
\lim_{t\to\infty} \sup_{\gamma \in\ms M_{\rm ac}} \; 
\| \Delta \Psi^{(\gamma)}(t) \|_\infty  \;=\; 0 \;.
\end{equation*}

Expressing the derivative $(\nabla \Psi^{(\gamma)}) (t,x)$ as
$(\nabla \Psi^{(\gamma)}) (t,0) + \int_0^x (\Delta
\Psi^{(\gamma)})(t,y)\, dy$, and since
$(\nabla \Psi^{(\gamma)}) (t,0) = A^{-1} \, \Psi^{(\gamma)}(t,0)$, we
deduce from the two previous results that
\begin{equation*}
\lim_{t\to\infty} \sup_{\gamma \in\ms M_{\rm ac}} \; 
\| (\nabla \Psi^{(\gamma)}) (t) \|_\infty  \;=\; 0 \;.
\end{equation*}

By \eqref{4-10}, the assertion of the lemma follows from the previous
estimates.
\end{proof}

Lemma \ref{l12} shows that we may join a profile $\gamma$ in
$\ms M_{\rm ac}$ to a neighborhood of the stationary profile by using
the equation \eqref{4-08} for a time interval $[0,T_1]$ which at the
same time regularizes the profile. As described in the Strategy of the
proof, it remains to connect $v^{(\gamma)}(T_1)$ to $\bar\rho$. In the
next lemma we show that this can be done by paying only a small price.
Denote by $\Vert\,\cdot\, \Vert_2$ the norm in $\ms L^2([0,1])$, and
recall the definition of $\delta_0$, given at the beginning of this
section, and of the set $D_{T,\delta}$, introduced in \eqref{4-01}. In
the lemma below, $\lambda_1$ represents the smallest eigenvalue of the
Robin Laplacian (cf. Appendix \ref{sec04}).

\begin{lemma}
\label{l13} 
Let $\gamma \in \ms {M}_{\rm ac}$ be a smooth profile such that
$\|\gamma - \bar\rho\|_\infty \le \delta_0 \, \min\{(1/4)$,
$(1/\Lambda)\}$, where $\Lambda = 16 \sqrt{A/\lambda_1}$. Then, there
exist a smooth path $w(t)$, $t\in [0,1]$, with
$\delta_0/2 \le w(t) \le 1- \delta_0/2$, $w(0)=\bar\rho$, $w(1)=\gamma$
and a finite constant $C_0=C_0(\delta_0)$ such that
\begin{equation*}
I_{[0,1]}(w|\bar\rho) \;\le\; C_0\,  \| \gamma -\bar\rho\|_2^2
\;. 
\end{equation*}
In particular, for profiles $\gamma$ satisfying the hypotheses of this
lemma, $V(\gamma) \,\le\,  C_0 \, \| \gamma -\bar\rho\|_2^2$.
\end{lemma}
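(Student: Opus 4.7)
The plan is to take the linear interpolation $w(t,x) := \bar\rho(x) + t\,\phi(x)$ with $\phi := \gamma-\bar\rho$, defined for $t\in[0,1]$. Since $\Lambda\ge 4$, the hypothesis gives $\|\phi\|_\infty \le \delta_0/4$, which combined with $\delta_0\le\bar\rho\le 1-\delta_0$ yields $\delta_0/2 \le w(t,x)\le 1-\delta_0/2$ pointwise; by smoothness of $\gamma$ and of the linear profile $\bar\rho$, we have $w\in C^{1,2}([0,1]\times[0,1])$, together with $w(0)=\bar\rho$, $w(1)=\gamma$. This gives a legitimate competitor in the variational problem \eqref{1-04}.

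I next estimate $I_{[0,1]}(w|\bar\rho)=\sup_H J_{1,H}(w)$ by producing a uniform-in-$H$ bound. The key observation is that $J_{1,H}(\bar\rho)\le 0$ for every $H$, because $\bar\rho$ is the stationary solution of the hydrodynamic equation and hence has zero cost. Writing $J_{1,H}(w)=J_{1,H}(\bar\rho) + [J_{1,H}(w)-J_{1,H}(\bar\rho)]$ and Taylor-expanding the difference in the perturbation $\phi$, the residue splits as $L_H(\phi)+Q_H(\phi)$, where $L_H$ collects the terms linear in $\phi$ arising from $\partial_t w$, from $\nabla w - \nabla\bar\rho = t\nabla\phi$, from the first-order expansion $\sigma(\bar\rho+t\phi)-\sigma(\bar\rho)= t(1-2\bar\rho)\phi - t^2\phi^2$, and from the linearization $\partial_a \mathfrak{b}_{\varrho,D}(\bar\rho(i), H_t(i))\cdot t\phi(i)$ of the boundary terms, while $Q_H$ contains the remaining $O(\phi^2)$ contributions.

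To control $L_H(\phi)$, I apply Cauchy--Schwarz and Young's inequality $ab\le \tfrac{1}{2c}a^2+\tfrac{c}{2}b^2$ to each linear pairing, writing schematically each such term as $\tfrac{C_0(\delta_0)}{2c}\|\phi\|_2^2+\tfrac{c}{2}\|H\|_\star^2$ for an appropriate norm $\|\cdot\|_\star$ (involving $\|\nabla H_t\|_2$ and boundary values $H_t(0), H_t(1)$). The $\|H\|_\star^2$ terms must then be absorbed by the genuine nonnegative quadratic contributions already present in $-J_{1,H}(\bar\rho)$: the bulk coercivity $\int_0^1\langle\sigma(\bar\rho),(\nabla H_t)^2\rangle\,dt\ge \delta_0(1-\delta_0)\int_0^1\|\nabla H_t\|_2^2\,dt$, and the convex boundary contributions $\int_0^1 \mathfrak{b}_{\alpha,A}(\bar\rho(0),H_t(0))\,dt$, $\int_0^1 \mathfrak{b}_{\beta,B}(\bar\rho(1), H_t(1))\,dt$, which are bounded below by $c_\alpha A^{-1}H_t(0)^2$ and $c_\beta B^{-1}H_t(1)^2$ once one restricts attention to a region where $|H|$ is bounded (so that convexity provides a quadratic lower bound). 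The Robin Poincar\'e inequality from Appendix \ref{sec04} converts $\|\nabla H_t\|_2^2+A^{-1}H_t(0)^2+B^{-1}H_t(1)^2$ into a genuine $H^1$-coercivity, producing the explicit factor $\lambda_1$; the threshold $\Lambda=16\sqrt{A/\lambda_1}$ is exactly what is needed to close the Young absorption with $Q_H(\phi)\le\|\phi\|_\infty^2\int_0^1\|\nabla H_t\|_2^2\,dt$ strictly dominated by the coercive quadratic, yielding $J_{1,H}(w)\le C_0(\delta_0)\|\phi\|_2^2$ uniformly.

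The main obstacle is the absorption step, for two reasons: first, the linear interpolation does not satisfy the Robin boundary conditions, so integration by parts in space produces unavoidable boundary pairings $t\,\nabla\phi(i)\,H_t(i)$ that must be handled alongside $\mathfrak{b}$; second, $\mathfrak{b}_{\varrho,D}(a,M)$ is convex but not globally quadratic in $M$, so reducing to the regime where the quadratic lower bound applies requires a preliminary truncation (or a dichotomy on $|H_t(i)|$) before Young. Once the uniform bound $J_{1,H}(w)\le C_0\|\phi\|_2^2$ is established, $I_{[0,1]}(w|\bar\rho)\le C_0\|\phi\|_2^2$ follows, and the last assertion $V(\gamma)\le C_0\|\phi\|_2^2$ is then immediate from the definition \eqref{1-04} of the quasi-potential applied to this competitor path.
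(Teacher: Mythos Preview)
Your linear interpolation cannot deliver the $\ms L^2$ bound that the lemma claims. The issue is the term coming from $\nabla w_t-\nabla\bar\rho=t\,\nabla\phi$: after the Young splitting you propose, the pairing $\int_0^1 t\,\langle\nabla\phi,\nabla H_t\rangle\,dt$ produces a contribution $C_0\,\|\nabla\phi\|_2^2$, not $C_0\,\|\phi\|_2^2$. There is no way to trade $\nabla\phi$ for $\phi$ here, because the only quadratic coercivity available in $-J_{1,H}$ controls $\|\nabla H_t\|_2^2$ and the boundary values $H_t(0),H_t(1)$; it does not control $\|\Delta H_t\|_2$ or $\nabla H_t(0),\nabla H_t(1)$, which is what an integration by parts on $\langle\nabla\phi,\nabla H_t\rangle$ would require. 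So the straight path gives at best $I_{[0,1]}(w|\bar\rho)\le C_0\,\|\gamma-\bar\rho\|_{\mc H^1}^2$, which is weaker than the assertion and not what is used downstream. The paper makes exactly this point in the paragraph preceding the proof: ``The `straight path' $w(t)=\bar\rho(1-t)+\gamma t$ yields a bound in terms of the $\mc H_1([0,1])$ norm of $\gamma-\bar\rho$.''

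The paper's remedy is to replace the linear interpolation by the ``heated'' path
\[
w(t)\;=\;\bar\rho\;+\;\sum_{k\ge 1}\frac{e^{\lambda_k t}-1}{e^{\lambda_k}-1}\,\langle\gamma-\bar\rho,f_k\rangle\,f_k
\]
in the Robin eigenbasis. This path has two advantages your straight line lacks. First, it satisfies the Robin boundary conditions for every $t$, so after two integrations by parts $J_{1,H}(w)$ involves the nonnegative boundary functionals $\mf q_{\varrho,D}$ rather than $\mf b_{\varrho,D}$, and the quadratic lower bound $\mf q_{\varrho,D}(a,M)\ge D^{-1}(\delta_0/2)^2 M^2$ holds globally in $M$ --- no truncation or dichotomy on $|H_t(i)|$ is needed. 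Second, and this is the crux, $h(t):=\partial_t w_t-\Delta w_t$ has the spectral representation $h(t)=\sum_k \lambda_k\,\frac{2e^{\lambda_k t}-1}{e^{\lambda_k}-1}\,\langle\phi,f_k\rangle f_k$, so that $\langle h(t),H_t\rangle$ can be bounded via Young in the $\mc H_R$--$\mc H_R^{-1}$ duality, yielding $\sum_k \lambda_k^{-1}\langle h(t),f_k\rangle^2$ on the $\phi$ side. After the time integral, the factors of $\lambda_k$ cancel and one is left with $\sum_k\langle\phi,f_k\rangle^2=\|\phi\|_2^2$. The constant $\Lambda=16\sqrt{A/\lambda_1}$ in the hypothesis arises precisely from this construction (to keep $w$ in $[\delta_0/2,1-\delta_0/2]$ via the forcing term $g$), not from any Robin--Poincar\'e absorption as you suggest.
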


The ``straight path'' $w (t) = \bar\rho\, (1-t) + \gamma \,t$ yields a
bound in terms of the $\mc H_{1}([0,1])$ norm of $\gamma-\bar\rho$. In
contrast, the path below, similar to the one proposed in
\cite{BDGJL2003}, provides a bound in terms of the $\ms L^2$ norm.  We
assume in the proof below that the reader is familiar with the
notation and results presented in Appendix \ref{sec04}.

\smallskip
\begin{proof}
Recall that we denote by $\{f_j : j\ge 1\}$ the orthonormal
eigenfunctions of the Robin Laplacian and by $\lambda_j$ the
associated eigenvalues.

We claim that the path
$w(t)=w(t,x)$, $(t,x)\in [0,1]\times [0,1]$ given by
\begin{equation}
\label{6-06}
w(t) \;=\;   \bar\rho \;+\; \sum_{k\ge 1}
\frac{e^{\lambda_k t} -1}{e^{\lambda_k} -1} \,
\langle \gamma -\bar\rho \,,\, f_k\rangle \, f_k
\end{equation}
fulfills the conditions stated in the lemma. By \cite[Corollary 2 of
Section 4.4]{zeid108}, this sum is absolutely convergent, uniformly in
$x\in [0,1]$.

Clearly, $w(0)=\bar\rho$, $w(1)=\gamma$ and $w$
satisfies the boundary conditions
\begin{equation}
\label{6-03}
(\nabla w) (t,0) \,=\, A^{-1} \, [w (t,0) - \alpha ]\;, \quad
(\nabla w)(t,1) \,=\, B^{-1} [\beta - w (t,1)]\;.
\end{equation}
By the smoothness assumption on $\gamma$,
$w\in C^{1,2}([0,1]\times [0,1])$.

In order to show that $\delta_0/2 \le w\le 1- \delta_0/2$, write
$w(t)$ as $w(t)=\bar\rho + q(-t)$. Clearly, $q(t)=q(t,x)$,
$(t,x)\in [-1,0]\times[0,1]$ solves the equation
\begin{equation}
\label{5-04}
\left\{
\begin{aligned}
& \partial_t q (t) \;=\;  \Delta q(t) \,-\,  g \\
& (\nabla q) (t,0) \,=\, A^{-1} \, q(t,0) \;,\\
& (\nabla q)(t,1) \,=\, -\, B^{-1} q (t,1) \;,\\
& q(-1) = \gamma -\bar\rho \;, 
\end{aligned}
\right.
\end{equation}
where $g=g(x)$ is given by
\begin{equation*}
g \;=\; \sum_{k\ge 1} \frac{\lambda_k}{e^{\lambda_k} -1} 
\langle \gamma -\bar\rho \,,\, f_k\rangle \, f_k\;.
\end{equation*}

Recall the definition of the $\mc H_R$-norm $\| g\|_{\mc H_R}$ induced 
by the Robin Laplacian. By definition of $g$ and since
$a^2 \le 2( e^a -1)$, $a>0$,
$\| \gamma -\bar\rho \|_\infty \le \delta_0/\Lambda$,
\begin{equation*}
\begin{aligned}
\| g\|_{\mc H_R}^2 \;& =\; \sum_{k\ge 1} \lambda_k \,
\Big( \frac{\lambda_k}{e^{\lambda_k} -1} \Big)^2 \, \langle \gamma
-\bar\rho, f_k\rangle^2 \;\le\; \frac{4}{\lambda_1} \sum_{k\ge 1}
\langle \gamma -\bar\rho, f_k\rangle^2 \\
& =\; \frac{4}{\lambda_1} \, \| \gamma -\bar\rho \|_2^2 \;\le\;
\frac{4}{\lambda_1} \frac{\delta_0^2}{\Lambda^2} \;\cdot
\end{aligned}
\end{equation*}

The solution $q(t)$ of \eqref{5-04} can be expressed as
$P_{t+1}^{(R)} (\gamma -\bar\rho) + \int_{-1}^t  P^{(R)}_{t-s} g\,
ds$. Therefore, since, by hypothesis, $\|\, \gamma -\bar\rho \,
\|_\infty \le \delta_0/4$, by \eqref{5-05}, \eqref{6-02} and the
previous bound,  
\begin{equation*}
\sup_{t\in[-1,0]} \|q(t)\|_\infty \;\le\; \|\, \gamma -\bar\rho \,
\|_\infty \;+\;
\Vert\, g \, \|_\infty \;\le\;
\frac{\delta_0}{4} \;+\;
\sqrt{2(A\vee 1)}\, \frac{2}{\sqrt{\lambda_1}}
\, \frac{\delta_0}{\Lambda} \; \cdot
\end{equation*}
In particular, by definition of $\Lambda$, $w$ belongs to
$D_{1,\delta_0/2}$.

\smallskip

We turn to the cost of the path $w$. Since $w$ is a smooth path such that
$w(0) = \bar\rho$, in formula \eqref{1-01}, integrate by parts twice
in space and once in time to get that 
\begin{equation*}
\begin{aligned}
J_{1,H} ( w ) \; & =\;
\int_0^{1} \big\langle  \partial_t w_t \,-\, \Delta w_t
\,,\, H_t  \big\rangle \; dt
\;-\; \int_0^{1}  \big\langle \sigma( w_t ), 
\big( \nabla H_t \big)^2 \big\rangle \; dt
\\
& -\;  \int_0^{1} \nabla w_t(1)\,   H_t (1) \; dt 
\; +\;   \int_0^{1} \nabla w_t(0)\, H_t(0) \; dt  \\
& -\; \int_0^1 \Big\{\, 
\mf b_{\alpha, A} \big(\, w_t(0)\, ,\, H_t (0)\, \big)
\;+\; \mf b_{\beta, B} \big(\, w_t(1)\, ,\, H_t(1)\, \big)\,
\Big\}\, dt \; .
\end{aligned}
\end{equation*}
for all $H$ in $C^{1,2}([0,1]\times[0,1])$. Recall the definition of
$\mf q_{\varrho, D} (a, M)$, introduced in \eqref{6-04}. As $w$
satisfies the boundary conditions \eqref{6-03}, we may rewrite the
previous identity as
\begin{equation*}
\begin{aligned}
J_{1,H} ( w ) \; & =\;
\int_0^{1} \big\langle  \partial_t w_t \,-\, \Delta w_t
\,,\, H_t  \big\rangle \; dt
\;-\; \int_0^{1}  \big\langle \sigma( w_t ), 
\big( \nabla H_t \big)^2 \big\rangle \; dt
\\
& -\; \int_0^1 \Big\{\, 
\mf q_{\alpha, A} \big(\, w_t(0)\, ,\, H_t (0)\, \big)
\;+\; \mf q_{\beta, B} \big(\, w_t(1)\, ,\, H_t(1)\, \big)\,
\Big\}\, dt \; .
\end{aligned}
\end{equation*}
As $\delta_0/2 \le w\le 1- \delta_0/2$,
$\sigma( w_t ) \ge (\delta_0/2)^2$. On the other hand, for
$\delta_0/2 \le a, \varrho \le 1- \delta_0/2$, $\mf q_{\varrho, D} (a,
M) \ge (2/D) \, (\delta_0/2)^2 \, [\cosh M -1] \ge (1/D) \,
(\delta_0/2)^2 \, M^2$. Therefore,
\begin{equation*}
\begin{aligned}
J_{1,H} ( w ) \; & \le \;
\int_0^{1} \big\langle  \partial_t w_t \,-\, \Delta w_t
\,,\, H_t  \big\rangle \; dt
\;-\; \Big( \frac{\delta_0}{2}\Big)^2 \, \int_0^{1}  \big\langle 
\big( \nabla H_t \big)^2 \big\rangle \; dt
\\
& -\; \Big( \frac{\delta_0}{2}\Big)^2 \int_0^1 \Big\{\, 
\frac{1}{A} H_t(0)^2 \;+\; \frac{1}{B} H_t(1)^2
\Big\}\, dt \; .
\end{aligned}
\end{equation*}
By \eqref{6-05}, we may rewrite this inequality as
\begin{equation*}
J_{1,H} ( w ) \; \le \;
\int_0^{1} \big\langle  \partial_t w_t \,-\, \Delta w_t
\,,\, H_t  \big\rangle \; dt
\;-\; \Big( \frac{\delta_0}{2}\Big)^2 \, \int_0^{1}  
\big\Vert\, H_t \, \big\Vert_{\mc H_R}^2  \; dt\;.
\end{equation*}

Since $Q_{[0,1]} (w) < \infty$, the rate functional
$I_{[0,1]}(w|\bar\rho)$ is given by the variational formula
\eqref{1-02}. Therefore, maximizing over $H$ on both sides of the
previous displayed equation yields that
\begin{equation}
\label{6-08}
I_{[0,1]}(w|\bar\rho) \; \le \;
\int_0^{1} \Big( \frac{2}{\delta_0}\Big)^2 \sup_{G\in C^2([0,1])}
\Big\{\,
\big\langle h(t) \,,\, G  \big\rangle  \;-\;  
\big\Vert\, G \, \big\Vert_{\mc H_R}^2  \, \Big\} \; dt\;,
\end{equation}
where $h(t) = \partial_t w_t \,-\, \Delta w_t$.

By \eqref{6-06},
\begin{equation*}
h(t) \;=\; \sum_{k\ge 1} \lambda_k \,
\frac{2 e^{\lambda_k t} -1}{e^{\lambda_k}-1} \,
\langle \gamma -\bar\rho \,,\, f_k\rangle \, f_k 
\end{equation*}
Hence, by Young's inequality and \eqref{6-07}, for all $G\in
C^2([0,1])$,
\begin{equation*}
\langle h(t) \,,\, G  \rangle \;=\;
\sum_{k\ge 1} \langle h(t) \,,\, f_k  \big\rangle\,
\langle G \,,\, f_k  \big\rangle \;\le\;
\sum_{k\ge 1} \frac{1}{4\, \lambda_k}
\langle h(t) \,,\, f_k  \big\rangle^2 \;+\;
\big\Vert\, G \, \big\Vert_{\mc H_R}^2\;.
\end{equation*}
By the formula for $h(t)$, the last sum is equal to
\begin{equation*}
\frac{1}{4} \sum_{k\ge 1} \lambda_k \,
\Big( \frac{2 e^{\lambda_k t} -1}{e^{\lambda_k}-1}\Big)^2 \,
\langle \gamma -\bar\rho \,,\, f_k\rangle^2
\;\le\; J^2 \sum_{k\ge 1} \lambda_k \,
e^{2 \lambda_k (t-1)} \,
\langle \gamma -\bar\rho \,,\, f_k\rangle^2 \;,
\end{equation*}
where $J= (1-e^{-\lambda_1})^{-1}$.

Reporting the previous estimate to \eqref{6-08} yields that
\begin{equation*}
\begin{aligned}
I_{[0,1]}(w|\bar\rho) \; & \le \;
\Big( \frac{2J}{\delta_0}\Big)^2 \int_0^{1} 
\sum_{k\ge 1} \lambda_k \, e^{2 \lambda_k (t-1)}
\, \langle \gamma -\bar\rho \,,\, f_k\rangle^2  \; dt \\
& \le \; \Big( \frac{2J}{\delta_0}\Big)^2 
\sum_{k\ge 1} \, \langle \gamma -\bar\rho \,,\, f_k\rangle^2
\;=\; \Big( \frac{2J}{\delta_0}\Big)^2  \, \Vert \, \gamma -\bar\rho
\, \Vert^2_2\;,
\end{aligned}
\end{equation*}
which concludes the proof of the lemma.
\end{proof}

We can now prove the upper bound for the quasi-potential and conclude
the proof of Theorem \ref{qp=s}.

\begin{lemma}
\label{l14}  
For each $\gamma \in \ms M_{\rm ac}$, we have  $V(\gamma) \le S(\gamma)$.
\end{lemma}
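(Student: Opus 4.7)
Following the road map sketched in Section \ref{sec06}, my plan is to realize a nearly-optimal path from $\bar\rho$ to $\gamma$ by time-reversing the adjoint trajectory $v^{(\gamma)}$ built in Proposition \ref{l10}, and then bridge its tail back to $\bar\rho$ with a short, cheap path. First I would treat smooth, uniformly bounded profiles $\gamma\in\ms M_\delta$. For such $\gamma$, Proposition \ref{l10} together with Lemmas \ref{l10b} and \ref{l11} gives a $C^{1,2}$ trajectory $v^{(\gamma)}$ taking values in some $[\delta',1-\delta']$ that solves the coupled system \eqref{4-11}--\eqref{4-08b}. Fix $T_1>0$ and set $w^{(1)}(t):=v^{(\gamma)}(T_1-t)$ on $[0,T_1]$. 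Writing $\Gamma_t=\log[w^{(1)}_t/(1-w^{(1)}_t)]-R_{T_1-t}$ and using the elementary identities $e^{\Gamma}=[w/(1-w)]e^{-R}$, $e^{-\Gamma}=[(1-w)/w]e^{R}$, a direct computation shows that the boundary conditions of \eqref{4-11} are transformed by time-reversal into the boundary conditions of \eqref{4-06} with $K\equiv 0$ (the relation $-\mf p_{1-\beta,B}(w,R)=\mf p_{\beta,B}(w,\Gamma)$ is precisely what makes this work, and similarly on the left end). Hence $w^{(1)}$ solves \eqref{4-06} with $K\equiv 0$ and initial datum $\psi=v^{(\gamma)}(T_1)$, so Lemma \ref{l08} (noting $\mf q_{\varrho,D}(a,0)=0$) yields
\begin{equation*}
I_{[0,T_1]}\bigl(w^{(1)}\,\big|\,v^{(\gamma)}(T_1)\bigr)\;=\;S_0(\gamma)\,-\,S_0\bigl(v^{(\gamma)}(T_1)\bigr)\;.
\end{equation*}

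Second, I would close the gap between $\bar\rho$ and $v^{(\gamma)}(T_1)$. By Lemma \ref{l12}, $\|v^{(\gamma)}(T_1)-\bar\rho\|_\infty\to 0$ uniformly in $\gamma$ as $T_1\to\infty$, so for $T_1$ large enough the hypotheses of Lemma \ref{l13} are satisfied, and that lemma produces a smooth path $w^{(2)}$ on $[0,1]$ from $\bar\rho$ to $v^{(\gamma)}(T_1)$ with $I_{[0,1]}(w^{(2)}\,|\,\bar\rho)\le C_0\|v^{(\gamma)}(T_1)-\bar\rho\|_2^2$. Concatenating the two paths at $t=1$ gives a path $w$ on $[0,T_1+1]$ from $\bar\rho$ to $\gamma$ whose cost is controlled via the subadditivity \eqref{4-02b}:
\begin{equation*}
V(\gamma)\;\le\;I_{[0,T_1+1]}(w\,|\,\bar\rho)\;\le\;S_0(\gamma)\,-\,S_0\bigl(v^{(\gamma)}(T_1)\bigr)\,+\,C_0\,\|\,v^{(\gamma)}(T_1)-\bar\rho\,\|_2^2\;.
\end{equation*}
Letting $T_1\to\infty$, Lemma \ref{l12} kills the last term while the lower semicontinuity of $S_0$ (Theorem \ref{t02}) forces $\liminf S_0(v^{(\gamma)}(T_1))\ge S_0(\bar\rho)$. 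This gives $V(\gamma)\le S_0(\gamma)-S_0(\bar\rho)=S(\gamma)$ for smooth $\gamma$.

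For a general $\gamma\in\ms M_{\rm ac}$, I would approximate by smooth profiles $\gamma_n\in\ms M_{\delta_n}$ with $\gamma_n\to\gamma$ a.e.\ (mollification truncated away from $\{0,1\}$); by the previous step and the continuity property in Remark \ref{rm01}, $V(\gamma_n)\le S(\gamma_n)\to S(\gamma)$, and a short cheap bridge from $\gamma_n$ to $\gamma$ -- obtained from the same type of construction as in Theorem \ref{mt3b} -- combined with subadditivity concludes the argument. The main obstacle is the verification underlying the first paragraph: one must match the two distinct families $\mf p_{1-\beta,B}$ (in \eqref{4-11}) and $\mf p_{\beta,B}$ (in \eqref{4-06}) at both boundaries via the $e^{\pm\Gamma}/e^{\mp R}$ identity, and check that the $L^2$-bound from Lemma \ref{l13} is compatible with the $L^\infty$-convergence from Lemma \ref{l12}, so that the cross term in the concatenated cost truly vanishes in the limit.
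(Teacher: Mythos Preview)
Your argument for smooth $\gamma\in\ms M_\delta$ is correct and matches the paper's: the time-reversed trajectory $w^{(1)}(t)=v^{(\gamma)}(T_1-t)$ solves \eqref{4-06} with $K\equiv 0$, so Lemma~\ref{l08} gives the identity you wrote, and the concatenation with the short path from Lemma~\ref{l13} plus subadditivity \eqref{4-02b} yields $V(\gamma)\le S(\gamma)$ after $T_1\to\infty$.

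The gap is in your last paragraph. From $V(\gamma_n)\le S(\gamma_n)\to S(\gamma)$ you cannot conclude $V(\gamma)\le S(\gamma)$ without some upper semicontinuity of $V$, which is not available. Your proposed fix is a ``short cheap bridge from $\gamma_n$ to $\gamma$'', but no such bridge is provided by Theorem~\ref{mt3b} (that theorem approximates \emph{paths} starting from a fixed initial profile, not endpoints), and for a general $\gamma\in\ms M_{\rm ac}$---possibly discontinuous, possibly touching $0$ or $1$---there is no obvious finite-time path from a smooth $\gamma_n$ to $\gamma$ whose cost vanishes as $n\to\infty$. This is exactly the obstruction the paper's proof is designed to avoid.

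The paper circumvents this as follows. It first builds the concatenated path $w^*$ \emph{for the general} $\gamma$: by Proposition~\ref{l10} the trajectory $v^{(\gamma)}$ is well defined even when $\gamma$ is merely in $\ms M_{\rm ac}$, so $w^*(T)=v^{(\gamma)}(0)=\gamma$ exactly. One cannot apply Lemma~\ref{l08} to $w^*$ directly (it need not lie in $D_{T,\delta}$), so the paper approximates by the analogous paths $w^{n,*}$ built from smooth $\gamma_n$, proves $w^{n,*}\to w^*$ in $D([0,T],\ms M)$, and then invokes the lower semicontinuity of $I_{[0,T]}(\,\cdot\,|\bar\rho)$ (Theorem~\ref{mt2b}) to get
\[
V(\gamma)\;\le\;I_{[0,T]}(w^*\,|\,\bar\rho)\;\le\;\liminf_n I_{[0,T]}(w^{n,*}\,|\,\bar\rho)\;\le\;\liminf_n\bigl\{S_0(\gamma_n)-S_0(v^{(\gamma_n)}(T_1))+C_0\varepsilon^2\bigr\}.
\]
The right side is then handled with Remark~\ref{rm01} and the lower semicontinuity of $S_0$. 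The essential point is that the approximation is done at the level of \emph{paths}, exploiting the lower semicontinuity of the dynamical rate functional, rather than at the level of endpoints, which would require the unavailable upper semicontinuity of $V$.
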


\begin{proof}
Fix $0<\varepsilon < (\delta_0/2) \, \min\{(1/4) , (1/\Lambda)\}$,
where $\Lambda$ has been introduced in the statement of Lemma
\ref{l13}. Let $\gamma \in \ms M_{\rm ac}$, and recall that we denote
by $v^{(\gamma )}(t,x)$ the solution to \eqref{4-08} with initial
condition $\gamma$. By Lemma \ref{l12}, there exists
$T_1=T_1(\varepsilon)$ such that {\color{bblue}
$\| v^{(\gamma )}(t) -\bar\rho \|_\infty < \varepsilon$ for any
$t\ge T_1$}.  Since $v^{(\gamma )}(T_1)$ fullfils the hypotheses of
Lemma \ref{l13}, let $w$ be the path which connects $\bar\rho$ to
$v^{(\gamma )}(T_1)$ in the interval $[0,1]$ constructed in that
lemma.

Let $T:=T_1+1$ and $w^*(t)$, $t\in [0,T]$, be the path
\begin{equation}
\label{pi*}
w^* (t) \; =\; 
\left\{ 
  \begin{array}{ll}
w (t) & \text{for $0\le t\le 1$} \\
v^{(\gamma )} (T - t)  & \text{for $1\le t\le T$}
  \end{array}
\right.
\end{equation}

Recall the definition of $\ms M_\delta$ given in \eqref{4-01}.  Let
$(\gamma_n ,\, n\ge 1)$ be a sequence such that
$\gamma_n\in \ms M_{\delta_n}$ for some $\delta_n>0$ and which
converges to $\gamma$ a. s. Denote by $v^{(\gamma_n)}$ the solution to
\eqref{4-08} with initial condition $\gamma_n$.

\smallskip\noindent{\it Claim 1:} $v^{(\gamma_n)}(T_1)$ converges to
$v^{(\gamma )}(T_1)$ in $C([0,1])$. \smallskip

To prove this claim, let $F=F(\gamma)$, $F_n=F(\gamma_n)$, and denote
by $F^{(\gamma)}$, $F^{(\gamma_n)}$ the solutions to \eqref{1-06} with initial
conditions $F$, $F_n$, respectively.

By Proposition \ref{p01}, $F_n$ converges to $F$ in $C^1([0,1])$.
Hence, by Lemma \ref{l19}, $F^{(\gamma_n)}(T_1)$ converges to
$F^{(\gamma)}(T_1)$ in $C^2([0,1])$. On the other hand, by Lemma
\ref{l16}, there exists a constant $c_1>0$ such that
$c_1 \le \nabla F^{(\gamma_n)}(T_1) \le c^{-1}_1$,
$c_1 \le \nabla F^{(\gamma)} (T_1) \le c^{-1}_1$ for all $n\ge 1$.
Hence, by \eqref{4-10}, $v^{(\gamma_n)}(T_1)$ converges to
$v^{(\gamma )}(T_1)$ in $C([0,1])$, as claimed. \smallskip

Since $\| v^{(\gamma )}(T_1) -\bar\rho \|_\infty < \varepsilon$, by
Claim 1, there exists $n_0$ such that
$\| v^{(\gamma_n)}(T_1) -\bar\rho \|_\infty < 2\, \varepsilon$ for all
$n\ge n_0$. Fix such $n$, and let $w^{n} (t)$ be the path joining
$\bar\rho$ to $v^{(\gamma_n)}(T_1)$ in the time interval $[0,1]$
constructed in Lemma \ref{l13}. Define the path $w^{n,*} (t)$,
$0\le t\le T_1+1$ as
\begin{equation}
\label{pi*n}
w^{n,*} (t) \; =\; 
\left\{ 
\begin{array}{ll}
w^{n} (t) & \text{for $0\le t\le 1$} \\
v^{(\gamma_n)} (T - t)  & \text{for $1\le t\le T$}
  \end{array}
\right.
\end{equation}

\smallskip\noindent{\it Claim 2:} The path $w^{n,*}$ converges in
$D ([0,T], \ms M )$ to $w^*$. \smallskip

Before proving this claim, we conclude the proof of the lemma.
By the lower semi-continuity of the functional
$I_{[0,T]}(\,\cdot\, |\, \bar\rho)$,
\begin{equation}\label{4}
I_{[0,T]} (w^* | \bar \rho) 
\;\le\; \liminf_{n} I_{[0,T]} (w^{n,*} | \bar \rho)\;.
\end{equation}
On the other hand, by definition of the rate function and
\eqref{4-02b},
\begin{equation}
\label{eq:5}
I_{[0,T]} \big( w^{n,*} \,|\, \bar \rho \big) \;\le\;
I_{[0,1]} \big(w^{n} \,|\, \bar \rho \big) \;+\;
I_{[0,T_1]} \big( v^{(\gamma_n)}(T_1-\cdot) \,|\,  v^{(\gamma_n)}(T_1)\big)\;.
\end{equation}
By Lemma \ref{l13}, for $n\ge n_0$
\begin{equation}
\label{6}
I_{[0,1]} \big(w^{n} \big| \bar \rho \big) \,\le\; C_0\,
\Vert v^{(\gamma_n)}(T_1) - \bar\rho \Vert_2^2 
\end{equation}
for some constant $C_0=C_0(\delta_0)$.

By Proposition \ref{l10} and Lemmata \ref{l10b}, \ref{l11},
$(x,t) \mapsto v^{(\gamma_n)} (T_1-t, x)$ belongs to
$C^{1,2}([0,T_1] \times [0,1])$ and is bounded away from $0$ and $1$,
namely it belongs to $D_{T_1,\delta_n}$ for some $\delta_n>0$. Hence,
by Lemma \ref{l08}, as $v^{(\gamma_n)}(T_1-\,\cdot\,)$ solves
\eqref{4-06} with $K=0$,
\begin{equation}
\label{7}
I_{[0,T_1]} \big( v^{(\gamma_n)}(T_1-\cdot) \,|\, v^{(\gamma_n)}(T_1)\big)
\;=\; S_0(\gamma_n) \;-\; S_0(v^{(\gamma_n)}(T_1)) \;.
\end{equation}

By equations \eqref{4}--\eqref{7},
\begin{equation*}
I_{[0,T]} (w^* \,|\, \bar \rho) \;\le \;
\liminf_{n} \big\{\,  S_0(\gamma_n) \;-\; S_0(v^{(\gamma_n)}(T_1))
\;+\; C_0\,  \Vert v^{(\gamma_n)}(T_1) - \bar\rho \Vert_2^2 \,\big\}\;.
\end{equation*}
By Remark \ref{rm01}, $S_0(\gamma_n)$ converges to $S_0(\gamma)$.
Thus, by the convergence of $v^{(\gamma_n)}(T_1)$ to $v^{(\gamma)}(T_1)$
in $C([0,1])$, the lower semicontinuity of $S_0$, and the bound
$\Vert v^{(\gamma)}(T_1) -\bar\rho \Vert_\infty < \varepsilon$, the
right-hand side is less than or equal to
\begin{equation*}
S_0(\gamma) \;-\; S_0(\bar\rho) \;+\; C_0\, \Vert v^{(\gamma)} (T_1) -
\bar\rho \Vert_2^2 \;\le\; S(\gamma) \;+\; C_0\, \varepsilon^2 \;.
\end{equation*}
To completes the proof of the lemma, it remains to show that Claim 2
is in force.

\smallskip\noindent{\it Proof of Claim 2.}  It is enough to show that
$w^{n,*}$ converges to $w^*$ in $C([0,T], \ms {M})$. Equivalently, to
show that $v^{(\gamma_n)}$ converges to $v^{(\gamma)}$ in
$C([0,T_1], \ms {M})$ and $w^{n}$ converges to $w$ in
$C([0,1], \ms {M})$.

Fix $0< t_0 < T_1$ small. By the arguments presented in the proof of
Claim 1 and since the convergence in Lemma \ref{l19} is uniform for
$t\ge t_0$, $v^{(\gamma_n)}$ converges to $v^{(\gamma)}$ in
$C([t_0, T_1] \times [0,1])$.

On the other hand, by Lemma \ref{l01}, $\nabla F^{(\gamma_n)}(t)$ and
$\nabla F^{(\gamma)} (t)$ are uniformly bounded, and, by Lemma \ref{l16},
$\nabla v^{(\gamma_n)}(t)$ and $\nabla v^{(\gamma)}(t)$ are uniformly
bounded in $[0,T_1]$.  As $v^{(\gamma_n)}$, $v^{(\gamma)}$ are weak
solutions to \eqref{4-11}, for each $G\in C([0,1])$,
\begin{equation*}
\lim_{t_0\downarrow 0} \; \limsup_n \;
\sup_{t\in[0, t_0]} 
\big|\, \langle v^{(\gamma_n)} (t),G\rangle
\,-\, \langle v^{(\gamma)} (t),G\rangle \, \big|
\;=\; 0\;.
\end{equation*}
This concludes the proof that $v^{(\gamma_n)}$ converges to $v^{(\gamma)}$
in $C([0,T], \ms {M})$.

By \eqref{4-10} and Remark \ref{rm06}, we can extend the result
obtained in Claim 1 and show that $v^{(\gamma_n)}(T_1)$ converges to
$v^{(\gamma)} (T_1)$ in $C^2( [0,1])$. Hence, by the explicit formula
\eqref{6-06}, $w^{n}$ converges to $w$ in $C( [0,1]\times[0,1])$.
This completes the proof of Claim 2 and of the lemma.
\end{proof}


\appendix

\section{The Robin Laplacian}
\label{sec04}

We present in this section some results on the Robin Laplacian needed
in the previous section. Denote by $\color{bblue} \Delta_R$ the
Laplacian on $[0,1]$ with Robin boundary conditions, sometimes called
the Robin Laplacian \cite[Section 4.3]{S08}.

Consider the eigenvalue problem
\begin{equation}
\label{6-01}
\left\{
\begin{aligned}
& -\, \Delta f =  \lambda \, f \; , \\
& (\nabla f ) (0) \,=\, A^{-1} \, f(0) \;,\\
& (\nabla f)(1) \,=\, -\, B^{-1} f(1) \;.
\end{aligned}
\right.
\end{equation}
The equation $-\, \Delta f = \lambda \, f$ can be turned into a
two-dimensional ODE which yields that the solutions to \eqref{6-01}
are given by
$f(x) = a \, [ \, \cos (\sqrt{\lambda} x) + b \, \sin (\sqrt{\lambda}
x)\,]$ for some $a$, $b\in \bb R$. The boundary conditions are
satisfied if and only if
\begin{equation}
\label{6-09}
\tan \sqrt{\lambda} \;=\; (A+B)\,
\frac{\sqrt{\lambda}}{\lambda AB -1}\;,
\end{equation}
in which case $b= (A\sqrt{\lambda})^{-1}$. This identity excludes
$\lambda=0$ from the set of eigenvalues of the Robin Laplacian. 

An analysis of \eqref{6-09} shows that it has a countable set of
solutions $\color{bblue} \{\lambda_j : j\ge 1\}$, where $0<\lambda_1$,
$\lambda_j < \lambda_{j+1}$ and $\lambda_j \sim j^2$ in the sense that
there exists $0<c_0<c_1<\infty$ such that
\begin{equation}
\label{6-12}
c_0 \, j^2 \;\le\; \lambda_j \;\le\; c_1\, j^2\;.
\end{equation}
Denote by $\color{bblue} \{f_j : j\ge 1\}$ the associated orthonormal
eigenvectors, which form a basis of $\ms L^2([0,1])$. By the previous
analysis,
\begin{equation}
\label{6-13}
f_j(x) \;=\; a_j \,\big\{\, \cos (\sqrt{\lambda_j} x)
\;+\; \frac{1}{A\sqrt{\lambda_j}}  \, \sin (\sqrt{\lambda_j}
x)\, \big\}\;,
\end{equation}
where $a_j$ is chosen for $f_j$ to have $\ms L^2$-norm equal to $1$. It
can be shown that $|a_j|\le C_0$ for all $j\ge 1$, where $C_0$ is a
finite constant depending only on $A$ and $B$. Therefore, by
\eqref{6-12},
\begin{equation}
\label{6-11}
\Vert \, f_j \, \Vert_\infty \;\le\; C_0 \;,
\quad \Vert \, \nabla^n f_j \, \Vert_\infty
\;\le\; C_0 \, (\lambda_j)^{n/2} \;\le\; C_0 \, j^n
\end{equation}
for all $j\ge 1$, $n\ge 1$.

A straightforward computation provides a formula for the Green
function of the Robin Laplacian: Let
$K_R: [0,1] \times [0,1] \to \bb R_+$ be given by
\begin{equation}
\label{6-10}
K_R(x,y) \;=\; \frac{1}{1+A+B}\,
\left\{
\begin{aligned}
& (B+1-x)\, (A+y)\;, \quad 0\le y\le x \le 1\;,  \\
& (B+1-y)\, (A+x)\;, \quad 0\le x\le y \le 1 \;.
\end{aligned}
\right.
\end{equation}
Denote by $K_R$ the integral operator defined by
\begin{equation*}
(K_R f)(x) \;=\; \int_0^1 K_R(x,y)\, f(y)\; dy\;.
\end{equation*}
Then, $K_R = (-\Delta_R)^{-1}$.

Denote by $\color{bblue} \mc H_R$ the Hilbert space obtained by
completing the space
$\color{bblue} C^2_{A,B}([0,1]) = \{ f \in C^2([0,1]) : (\nabla f )
(0) \,=\, A^{-1} \, f(0) \;,\, (\nabla f)(1) \,=\, -\, B^{-1} f(1) \}$
endowed with the scalar product $\<\,\cdot\,,\,\cdot\,\>_{\mc H_R}$
defined by
\begin{equation}
\label{6-05}
\begin{aligned}
\<\,f\,,\,g\,\>_{\mc H_R} \;& =\; \<\,f\,,\, (-\, \Delta_R) g\,\> \\
& =\;
\frac{1}{A}\, f(0)\,g(0) \;+\; \int_0^1 (\nabla  f)(x)\,
(\nabla  g)(x) \; dx \;+\; \frac{1}{B}\, f(1)\, g(1) \;.
\end{aligned}
\end{equation}
Denote by $\color{bblue} \Vert f\Vert_{\mc H_R}$ the norm induced by the
scalar product $\<\,\cdot\,,\,\cdot\,\>_{\mc H_R}$. We have that
\begin{equation}
\label{6-16}
\Vert f\Vert^2_{\mc H_R} \;=\;
\sum_{k\ge 1} \lambda_k\, \< f \,,\, f_k\>^2 \;.
\end{equation}
for all $f\in \mc H_R$.

The norms $\Vert \,\cdot\, \Vert_{\mc H_R}$ and $\Vert \,\cdot\, \Vert_{\mc
H_1}$ are equivalent. There exist finite constants $0<C_1<C_2<\infty$
such that
\begin{equation}
\label{6-14}
C_1\, \Vert f\Vert_{\mc H_1} \;\le\; \Vert f\Vert_{\mc H_R}
\;\le\; C_2\, \Vert f\Vert_{\mc H_1}
\end{equation}
for all $f\in C^2([0,1])$. In particular, the spaces $\mc H_R$ and
$\mc H_1$ coincide.

In terms of the
eigenfunctions $f_k$,
\begin{equation}
\label{6-07}
\Vert \, f\, \Vert^2_{\mc H_R} \;=\;
\sum_{k\ge 1} \lambda_k \, |\, \<\,f \,,\, f_k\,\>\,|^2 \;.
\end{equation}
Moreover, a straightforward computation yields that for all
$f\in C^2_{A,B}([0,1])$,
\begin{equation}
\label{6-02}
\Vert \, f \, \Vert^2_\infty \;\le\; 2\, (A\vee 1)\,
\Vert f\Vert^2_{\mc H_R} \; .
\end{equation}

Fix a function $f$ in $\mc H_1$. It is well known that there exists a
continuous function $f^{(c)}: [0,1] \to \bb R$ (actually H\"older
continuous,
$|f^{(c)}(y) - f^{(c)}(x)| \le \Vert f \Vert_{2}
|y-x|^{1/2}$) such that $f = f^{(c)}$ almost surely. Moreover, for all
$h\in C^1([0,1])$,
\begin{equation}
\label{1-03b}
\int_0^1 f \, \nabla h\;dx \;=\; f^{(c)}(1)\, h(1)
\;-\; f^{(c)}(0)\, h(0)
\;-\; \int_0^1 \nabla f \,  h\;dx\;.
\end{equation}
The next result provides an explicit formula for $f^{(c)}$ in terms of
the eigenvectors $f_k$.

\begin{lemma}
\label{l15}
There exists a finite constant $C_0$ such that
\begin{equation*}
\sum_{k\ge 1} \big|\, \< f \,,\, f_k\>\, \big| \;\le\; C_0
\, \Vert \, f\,\Vert_{\mc H_R}
\end{equation*}
for all $f \in \mc H_1$.  In particular,
$\sum_{k\ge 1} \< f \,,\, f_k\>\, f_k(\cdot)$ defines a continuous
function, and, for almost all $x\in [0,1]$,
\begin{equation}
\label{6-15}
f(x) \;=\; \sum_{k\ge 1} \< f \,,\, f_k\>\, f_k(x)\;.
\end{equation}
\end{lemma}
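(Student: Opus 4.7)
The plan is to deduce the absolute summability of the Fourier coefficients from Cauchy--Schwarz combined with the eigenvalue asymptotics \eqref{6-12}, and then to obtain the pointwise reconstruction formula from the uniform boundedness of the eigenfunctions \eqref{6-11}.

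For the first inequality, I would write
\begin{equation*}
\sum_{k\ge 1} \big|\, \< f \,,\, f_k\>\, \big|
\;=\; \sum_{k\ge 1} \frac{1}{\sqrt{\lambda_k}} \, \sqrt{\lambda_k}\,  \big|\, \< f \,,\, f_k\>\, \big|
\end{equation*}
and apply the Cauchy--Schwarz inequality in $\ell^2$ to bound this by
\begin{equation*}
\Big( \sum_{k\ge 1} \frac{1}{\lambda_k} \Big)^{1/2}
\Big( \sum_{k\ge 1} \lambda_k\, \< f \,,\, f_k\>^2 \Big)^{1/2}\;.
\end{equation*}
By \eqref{6-12}, $\lambda_k \ge c_0 k^2$, so $\sum_k \lambda_k^{-1} \le c_0^{-1}\sum_k k^{-2} < \infty$. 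By \eqref{6-16}, the second factor equals $\Vert f \Vert_{\mc H_R}$. Taking $C_0$ to be the resulting constant yields the asserted bound. (Strictly speaking \eqref{6-16} is stated on $C^2_{A,B}([0,1])$; for general $f\in \mc H_1 = \mc H_R$ it passes to the closure via the equivalence of norms \eqref{6-14}.)

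Given this absolute summability, the second bound in \eqref{6-11} with $n=0$, namely $\Vert f_k \Vert_\infty \le C_0$ uniformly in $k$, implies that the series $\sum_{k\ge 1} \< f \,,\, f_k\> f_k(x)$ converges uniformly on $[0,1]$, and therefore defines a continuous function $g$ on $[0,1]$.

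Finally, since $\{f_k\}_{k\ge 1}$ is an orthonormal basis of $\ms L^2([0,1])$, the partial sums $S_n(x) = \sum_{k=1}^n \< f,f_k\> f_k(x)$ converge to $f$ in $\ms L^2([0,1])$. By the uniform convergence above, $S_n$ also converges to $g$ in $\ms L^2([0,1])$. By uniqueness of the $\ms L^2$-limit, $f = g$ almost everywhere, which gives \eqref{6-15}. The only slightly delicate point in the whole argument is the legitimacy of \eqref{6-16} for a general $f\in\mc H_1$, which is handled by approximation using the equivalence \eqref{6-14}.
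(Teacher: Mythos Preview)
Your proof is correct and follows essentially the same approach as the paper: both apply Cauchy--Schwarz with the splitting $|\<f,f_k\>| = \lambda_k^{-1/2}\cdot \lambda_k^{1/2}|\<f,f_k\>|$, invoke \eqref{6-12} for the convergence of $\sum_k \lambda_k^{-1}$ and \eqref{6-16}/\eqref{6-07} for the $\mc H_R$-norm, then deduce continuity from uniform convergence and the a.e.\ identity from the $\ms L^2$ basis property. Your version is in fact slightly more explicit about the uniform convergence step (via the bound $\Vert f_k\Vert_\infty \le C_0$ from \eqref{6-11}), but the arguments are otherwise the same.
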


\begin{proof}
By \eqref{6-14}, $f$ belongs to $\mc H_R$. By Schwarz inequality,
\begin{equation*}
\Big( \sum_{k\ge 1} \big|\, \< f \,,\, f_k\>\, \big| \,\Big)^2
\;\le\;
\sum_{k\ge 1} \lambda_k\, \big|\, \< f \,,\, f_k\>\, \big|^2 \;
\sum_{k\ge 1} \frac{1}{\lambda_k}\;\cdot
\end{equation*}
The second sum is finite by \eqref{6-12} and the first one is finite
by \eqref{6-07}. This proves the first assertion.

Since each function $f_k$ is continuous, and a summable sum of
continuous functions is continuous,
$\sum_{k\ge 1} \< f \,,\, f_k\>\, f_k(\cdot)$ defines a continuous
function. As $(f_k : k\ge 1)$ forms an orthonormal basis of
$\ms L^2([0,1])$, $f = \sum_{k\ge 1} \< f \,,\, f_k\>\, f_k$ as an
identity in $\ms L^2([0,1])$. In particular, these functions are equal
almost everywhere.
\end{proof}

Denote by $\color{bblue} (P^{(R)}_t: t\ge 0)$ the semigroup in
$\ms L^2([0,1])$ generated by the Robin Laplacian: For any function
$f\in \ms L^2([0,1])$, $t>0$,
\begin{equation}
\label{5-09}
P^{(R)}_t f \;=\; \sum_{k\ge 1} e^{-\lambda_k t}\, \< f \,,\, f_k\>\, f_k\;.
\end{equation}
In particular, for each $t\ge 0$, $P^{(R)}_t$ is a symmetric operator
in $\ms L^2([0,1])$ and $P^{(R)}_t f\in C^\infty ([0,1])$ for all
$f\in \ms L^2([0,1])$. Moreover, as $P^{(R)}_t$ is symmetric, by
\eqref{6-07}, $P^{(R)}_t$ is a contraction in $\mc H_R$ and
$\ms L^2([0,1])$: 
\begin{equation}
\label{6-18}
\begin{gathered}
\Vert\, P^{(R)}_t f\,\Vert^2_{\mc H_R} \;=\;
\sum_{k\ge 1} e^{-2\lambda_k t}\, \lambda_k\,
|\, \< f \,,\, f_k\>\,|^2  \;\le\;
\Vert\,  f\,\Vert^2_{\mc H_R}\;, \\
\Vert\, P^{(R)}_t f\,\Vert^2_{2} \;=\;
\sum_{k\ge 1} e^{-2\lambda_k t}\, 
|\, \< f \,,\, f_k\>\,|^2  \;\le\;
\Vert\,  f\,\Vert^2_{2}\;.
\end{gathered}
\end{equation}

Let $f\in \ms L^2([0,1])$ be given by
$f = \sum_{k\ge 1} \< f \,,\, f_k\> \, f_k$. For each $t>0$, there
exists a finite constant $C_0 (t)$ such that
\begin{equation}
\label{6-17}
\begin{aligned}
\Vert\, P^{(R)}_t f \,\Vert^2_{\infty} \;\le\;
C_0(t) \, \Vert\, f \,\Vert^2_{2} \;, \quad
\Vert\, P^{(R)}_t f \,\Vert^2_{\mc H_R} \;\le\;
C_0(t) \, \Vert\, f \,\Vert^2_{2}\;.
\end{aligned}
\end{equation}
Indeed, by \eqref{6-07} and since $P^{(R)}_t$ is symmetric and
$P^{(R)}_t f_k = e^{-\lambda_k t} f_k$,
\begin{equation*}
\Vert\, P^{(R)}_t f \,\Vert^2_{\mc H_R} \;=\;
\sum_{k\ge 1} \lambda_k \, e^{-2\lambda_k t}
\big|\, \< f \,,\, f_k\>\, \big|^2 
\;\le\; C_0(t) \, \sum_{k\ge 1} 
\big|\, \< f \,,\, f_k\>\, \big|^2
\;=\; C_0(t) \, \Vert\, f \,\Vert^2_{2} 
\end{equation*}
for some finite constant $C_0(t)$. On the other hand, by Schwarz
inequality and \eqref{6-11},
\begin{equation*}
\Vert\, P^{(R)}_t f \,\Vert^2_{\infty} \;=\;
\Big\Vert\, \sum_{k\ge 1} e^{-\lambda_k t}
\, \< f \,,\, f_k\>\, f_k   \, \Big\Vert^2_{\infty}
\;\le\; \sum_{k\ge 1} e^{-2\lambda_k t}
\sum_{k\ge 1} \, \< f \,,\, f_k\>^2
\;=\; C_0(t) \, \Vert\, f \,\Vert^2_{2} 
\end{equation*}
for some finite constant $C_0(t)$.

\begin{lemma}
\label{l17}
There exists a finite constant $C_0$ such that
\begin{equation*}
\Vert\, P^{(R)}_t f \,-\, f\,\Vert_2 \;\le \;
C_0\,  t^{1/3}\, \, \Vert\, f\, \Vert_{\mc H_R}
\end{equation*}
for all $t\ge 0$, $f \in \mc H_R$.
\end{lemma}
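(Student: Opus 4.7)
The plan is to exploit the spectral representation of $P^{(R)}_t$ in the orthonormal eigenbasis $\{f_k\}$. For $f\in\mc H_R$, expanding $f = \sum_k \langle f,f_k\rangle f_k$ and using $P^{(R)}_t f_k = e^{-\lambda_k t} f_k$ gives $P^{(R)}_t f - f = \sum_k (e^{-\lambda_k t}-1)\langle f,f_k\rangle f_k$. By Parseval and \eqref{6-07},
$$\Vert P^{(R)}_t f - f\Vert_2^2 \,=\, \sum_{k\ge 1}(1-e^{-\lambda_k t})^2\,|\langle f,f_k\rangle|^2, \qquad \Vert f\Vert_{\mc H_R}^2 \,=\, \sum_{k\ge 1}\lambda_k\,|\langle f,f_k\rangle|^2,$$
so the claim is reduced to a pointwise comparison of the coefficients.

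I would split the argument by the size of $t$. For $t\le 1$, I rely on the elementary bound $(1-e^{-x})^2\le x$ for all $x\ge 0$: for $x\le 1$, $(1-e^{-x})^2\le x^2\le x$, while for $x\ge 1$, $(1-e^{-x})^2\le 1\le x$. Applying this with $x=\lambda_k t$ yields
$$\Vert P^{(R)}_t f - f\Vert_2^2 \;\le\; t\sum_{k\ge 1}\lambda_k\,|\langle f,f_k\rangle|^2 \;=\; t\,\Vert f\Vert_{\mc H_R}^2 \;\le\; t^{2/3}\,\Vert f\Vert_{\mc H_R}^2,$$
the last step since $t\le t^{2/3}$ for $t\le 1$. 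Taking square roots gives the claim with constant $1$ in this regime.

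For $t\ge 1$, the $L^2$-contractivity of $P^{(R)}_t$ recorded in \eqref{6-18} together with the Poincaré-type bound $\Vert f\Vert_2^2\le \lambda_1^{-1}\Vert f\Vert_{\mc H_R}^2$ (which is immediate from $\lambda_k\ge\lambda_1$ and \eqref{6-07}) yields
$$\Vert P^{(R)}_t f - f\Vert_2 \;\le\; 2\,\Vert f\Vert_2 \;\le\; 2\lambda_1^{-1/2}\,\Vert f\Vert_{\mc H_R} \;\le\; 2\lambda_1^{-1/2}\,t^{1/3}\Vert f\Vert_{\mc H_R},$$
using $t^{1/3}\ge 1$ in the last inequality. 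Combining the two regimes with $C_0=\max\{1,\,2\lambda_1^{-1/2}\}$ concludes the proof.

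There is no genuine obstacle here; the statement is a standard spectral estimate, and in fact the argument above gives the sharper bound $\Vert P^{(R)}_t f - f\Vert_2\le \sqrt{t}\,\Vert f\Vert_{\mc H_R}$ for $t\le 1$. The exponent $1/3$ is chosen only so that a single, uniform bound holds for all $t\ge 0$, with the small-$t$ regime controlled via the Hölder-type exponent and the large-$t$ regime absorbed into the growth of $t^{1/3}$.
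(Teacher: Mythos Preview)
Your proof is correct, and in fact more streamlined than the paper's. Both arguments start from the same spectral identity $\Vert P^{(R)}_t f - f\Vert_2^2 = \sum_k (1-e^{-\lambda_k t})^2|\langle f,f_k\rangle|^2$, but they diverge from there. The paper splits the sum at a frequency threshold $k_0$, bounds the low-frequency part by $(\lambda_{k_0}t)^2\Vert f\Vert_2^2$ and the high-frequency part by $\lambda_{k_0}^{-1}\Vert f\Vert_{\mc H_R}^2$, and then optimizes over $k_0$ to balance the two terms, arriving at the exponent $1/3$. You instead use the uniform pointwise inequality $(1-e^{-x})^2\le x$, which immediately gives the sharper estimate $\Vert P^{(R)}_t f - f\Vert_2^2\le t\,\Vert f\Vert_{\mc H_R}^2$ for all $t$, and then handle large $t$ separately via contractivity and the spectral gap. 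Your route avoids any optimization and yields the better exponent $1/2$ for small $t$; the paper's frequency-splitting technique is more general-purpose (it adapts readily to other interpolation estimates, as in the proof of Lemma~\ref{l18}), but for this particular statement your argument is both shorter and stronger.
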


\begin{proof}
Since $(f_k : k\ge 1)$ is an orthonormal basis of $\ms L^2([0,1])$,
\begin{equation*}
\Vert\, P^{(R)}_t f \,-\, f\,\Vert^2_2 \;=\;
\sum_{k\ge 1} \big[\, e^{-\lambda_k \, t} \,-\, 1\,\big]^2 \,
\big|\, \< f \,,\, f_k\>\, \big|^2 \;.
\end{equation*}
Fix $k_0\ge 1$. Since the sequence $\lambda_k$ increases,
the right-hand side can be bounded by
\begin{equation*}
\big[\, e^{-\lambda_{k_0} \, t} \,-\, 1\,\big]^2
\sum_{k= 1}^{k_0-1}  \big|\, \< f \,,\, f_k\>\, \big|^2
\;+\; \frac{1}{\lambda_{k_0}} \,
\sum_{k \ge k_0}  \lambda_k\, \big|\, \< f \,,\, f_k\>\, \big|^2\;.
\end{equation*}
The first sum is bounded by $\Vert\, f\, \Vert^2_2$. In view of
\eqref{6-07}, the second one is bounded by
$\Vert\, f\, \Vert^2_{\mc H_R}$ so that 
\begin{equation*}
\Vert\, P^{(R)}_t f \,-\, f\,\Vert^2_2 \;\le \;
\big[\, 1  \,-\, e^{-\lambda_{k_0} \, t}  \,\big]^2
\,  \Vert\, f\, \Vert^2_2 \;+\;
\frac{1}{\lambda_{k_0}} \Vert\, f\, \Vert^2_{\mc H_R} \;.
\end{equation*}
As $1-e^{-x} \le x$, $x>0$, and since, by \eqref{6-14}, $\Vert\, f\,
\Vert_2 \le C_0 \Vert\, f\, \Vert_{\mc H_r}$ for some finite constant
$C_0$, 
\begin{equation*}
\Vert\, P^{(R)}_t f \,-\, f\,\Vert^2_2 \;\le \;
\Big\{\, C_0\, (\, \lambda_{k_0} \, t  \,)^2
\;+\; \frac{1}{\lambda_{k_0}} \,\Big\}\, \Vert\, f\, \Vert^2_{\mc H_R} \;.
\end{equation*}
To complete the proof, it remains to choose $k_0$ such that
$\lambda^{-3}_{k_0} \sim t^2$.
\end{proof}

\begin{lemma}
\label{l18}
There exists a finite constant $C_0$ such that
\begin{equation*}
\Vert \, P^{(R)}_t f \,-\, f  \,\Vert_\infty \;\le \;
C_0\,  t^{1/5}\, \, \Vert\, f\, \Vert_{\mc H_R} 
\end{equation*}
for all $t\ge 0$, $f \in C([0,1]) \cap \mc H_R$.
\end{lemma}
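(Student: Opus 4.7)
The plan is to mimic the proof of Lemma \ref{l17} but pass from the $\ms L^2$-norm to the $\ms L^\infty$-norm by exploiting the uniform bound $\Vert f_k \Vert_\infty \le C_0$ from \eqref{6-11}. Since $f \in C([0,1]) \cap \mc H_R$ and $\mc H_R = \mc H_1$ by \eqref{6-14}, Lemma \ref{l15} gives the pointwise expansion $f(x) = \sum_{k\ge 1} \< f, f_k\> f_k(x)$, whence
\begin{equation*}
\big( P^{(R)}_t f \,-\, f \big)(x) \;=\; \sum_{k\ge 1} \big( e^{-\lambda_k t} - 1 \big) \, \< f, f_k\> \, f_k(x) \;.
\end{equation*}
Taking absolute values and using \eqref{6-11},
\begin{equation*}
\Vert P^{(R)}_t f - f \Vert_\infty \;\le\; C_0 \, \sum_{k\ge 1} \big| \, 1 - e^{-\lambda_k t} \, \big| \, \big| \, \< f, f_k\> \, \big| \;.
\end{equation*}

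Next I would split the sum at a cutoff $k_0 \ge 1$ to be chosen. For $k \le k_0$ use $|1 - e^{-x}| \le x$ and for $k > k_0$ use $|1 - e^{-x}| \le 1$. Applying Cauchy--Schwarz together with the identity $\Vert f \Vert^2_{\mc H_R} = \sum_k \lambda_k |\<f,f_k\>|^2$ from \eqref{6-07} yields
\begin{equation*}
\sum_{k\le k_0} \lambda_k t \, |\<f,f_k\>| \;\le\; t \Big( \sum_{k\le k_0} \lambda_k \Big)^{1/2} \Vert f \Vert_{\mc H_R} \;, \quad
\sum_{k> k_0} |\<f,f_k\>| \;\le\; \Big( \sum_{k> k_0} \frac{1}{\lambda_k} \Big)^{1/2} \Vert f \Vert_{\mc H_R} \;.
\end{equation*}
The two-sided bounds $c_0 j^2 \le \lambda_j \le c_1 j^2$ from \eqref{6-12} give $\sum_{k\le k_0} \lambda_k \le C k_0^3$ and $\sum_{k > k_0} \lambda_k^{-1} \le C k_0^{-1}$, so that
\begin{equation*}
\Vert P^{(R)}_t f - f \Vert_\infty \;\le\; C_0 \, \big( \, t \, k_0^{3/2} \,+\, k_0^{-1/2} \, \big) \, \Vert f \Vert_{\mc H_R} \;.
\end{equation*}

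For $0 < t \le 1$ I would choose $k_0$ so that $\lambda_{k_0}^{3/2} t \sim k_0^{-1/2}$, i.e.\ $k_0 \sim t^{-1/2}$, which actually produces a $t^{1/4}$ bound; since $t^{1/4} \le t^{1/5}$ on $[0,1]$, the claimed exponent $1/5$ follows (the argument is slightly lossy, but the statement only requires $1/5$). For $t \ge 1$ the claim is trivial: by the contraction property \eqref{6-18} and the Sobolev embedding $\Vert g \Vert_\infty \le C \Vert g \Vert_{\mc H_R}$ in one dimension (consequence of \eqref{6-02} and \eqref{6-14}),
\begin{equation*}
\Vert P^{(R)}_t f - f \Vert_\infty \;\le\; \Vert P^{(R)}_t f \Vert_\infty + \Vert f \Vert_\infty \;\le\; C \, \Vert f \Vert_{\mc H_R} \;\le\; C \, t^{1/5} \, \Vert f \Vert_{\mc H_R} \;.
\end{equation*}
Combining the two regimes gives the assertion.

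There is no real obstacle: the only judgement call is the choice of cutoff $k_0$, and the argument is entirely parallel to that of Lemma \ref{l17}, the only new ingredient being the uniform bound $\Vert f_k \Vert_\infty \le C_0$, which replaces the orthonormality used in the $\ms L^2$ version.
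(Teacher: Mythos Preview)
Your proof is correct and follows essentially the same strategy as the paper's: expand pointwise in eigenfunctions using Lemma~\ref{l15}, invoke the uniform bound $\Vert f_k\Vert_\infty\le C_0$ from \eqref{6-11}, apply Cauchy--Schwarz with the $\mc H_R$-norm, and optimize over a cutoff $k_0$. The only difference is the order of operations---you split at $k_0$ \emph{before} Cauchy--Schwarz, whereas the paper applies Cauchy--Schwarz to the full sum and then splits---and your ordering actually yields the sharper exponent $t^{1/4}$ (the paper's choice $k_0^5\sim t^{-2}$ gives exactly $t^{1/5}$); note the small slip where you wrote $\lambda_{k_0}^{3/2}$ but meant $k_0^{3/2}$, consistent with your displayed bound and your conclusion $k_0\sim t^{-1/2}$.
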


\begin{proof}
Fix $x\in [0,1]$. Since $f$ is continuous, by \eqref{6-15} and
\eqref{6-11},
\begin{equation*}
\big\{ \, P^{(R)}_t f (x) \,-\, f (x) \,\big\}^2 \;\le\;
C_0\, \Big(\, \sum_{k\ge 1} \big[\, 1 \,-\, e^{-\lambda_k \, t} \,\big] \,
\big|\, \< f \,,\, f_k\>\, \big|\,\Big)^2 
\end{equation*}
for some finite constant $C_0$. By Schwarz inequality and
\eqref{6-07}, the right-hand is bounded by
\begin{equation*}
C_0\, \sum_{k\ge 1}  \frac{1}{\lambda_k} \,
\big[\, 1 \,-\, e^{-\lambda_k \, t} \,\big]^2 \,
\sum_{k\ge 1}  \lambda_k \,
\big|\, \< f \,,\, f_k\>\, \big|^2 \;=\;
C_0\, \Vert\, f\, \Vert^2_{\mc H_R} \,
\sum_{k\ge 1}  \frac{1}{\lambda_k} \,
\big[\, 1 \,-\, e^{-\lambda_k \, t} \,\big]^2 \;.
\end{equation*}

It remains to estimate the sum.  Fix $k_0\ge 1$.  Since the sequence
$\lambda_k$ increases, as $1-e^{-x} \le x$, $x>0$, by \eqref{6-12},
the sum is less than or equal to
\begin{equation*}
C_0\, \big[\, 1 \,-\, e^{-\lambda_{k_0} \, t} \,\big]^2
\;+\; \sum_{k \ge k_0}  \frac{1}{\lambda_{k}} \;\le\;
C_0\, \Big\{ \, (k^2_0 \, t)^2 \;+\; \frac{1}{k_0} \,\Big\} 
\end{equation*}
for some finite constant $C_0$. It remains to choose $k_0$ such that
$k_0^5 \sim t^{-2}$.
\end{proof}

\section{Heat equations with mixed boundary conditions}
\label{sec05}

We present in this section some result on the initial-boundary value
problems \eqref{1-06}, \eqref{5-01}.  Denote by
$\color{bblue} \mc H^1 = \mc H^1([0,1])$ the Hilbert space obtained by
completing the space $C^1([0,1])$ endowed with the scalar product
$\<\,\cdot\,,\,\cdot\,\>_{\mc H^1}$ defined by
\begin{equation}
\label{6-05b}
\<\,f\,,\,g\,\>_{\mc H^1} \;=\;
\<\,f\,,\, g\,\>  \;+\;
\<\, \nabla f\,,\, \nabla g\,\> \;.
\end{equation}
Denote by $\color{bblue} \Vert f\Vert_{\mc H^1}$ the norm induced by
the scalar product $\<\,\cdot\,,\,\cdot\,\>_{\mc H^1}$.  Fix a
function $\phi\in \ms L^2([0,1])$, and consider the initial-boundary
problem

\begin{equation}
\label{1-06h} 
\begin{cases}
\partial_tu \,=\, \Delta u\\
(\nabla u) (t,0) \,=\, A^{-1} \,u(t,0) \\
(\nabla u)(t,1) \,=\, -\, B^{-1}\, u(t,1) \\
u(0,\cdot) \,=\, \phi (\cdot)\;.
\end{cases}
\end{equation}

\begin{definition}
\label{d01}
A function $u$ in $\ms L^2([0,T]; \mc H^{1})$ is said to be a
generalized (or weak) solution in the cylinder $[0,T]\times [0,1]$ of
the equation \eqref{1-06h} if
\begin{equation*}
\begin{aligned}
& \int_0^1  u_t \, H_t \; dx \;-\; \int_0^1 \phi \, H_0 \; dx 
\;-\; \int_0^t ds \, \int_0^1  u_s \, \partial_s H_s \; dx \\
& \quad =\;- \int_0^t ds \, \int_0^1 \nabla u_s\, \nabla H_s \; dx
\;-\; \int_0^t \big\{\,  \frac{1}{B}\, u_s(1)\, H_s(1)
\,+\, \frac{1}{A} \, u_s(0)\, H_s(0) \,\big\} \; ds 
\end{aligned}
\end{equation*}
for every $0<t\le T$, function $H$ in $C^{1,2}([0,T]\times [0,1])$.
\end{definition}

\begin{theorem}
\label{mt6}
For each $\phi\in \ms L^2([0,1])$, there exists one and only one
generalized solution to \eqref{1-06h}. Moreover,
\begin{itemize}
\item[(a)] The solution is smooth in $(0,\infty) \times [0,1]$ and can
be represented as $u(t,x) = (P^{(R)}_t \phi)(x)$, where $P^{(R)}_t$ is
the semigroup associated to the Robin Laplacian.

\item[(b)]  For all $(t,x) \in \bb R_+ \times [0,1]$,
\begin{equation}
\label{10-01}
\min \{\, 0 \,,\, {\rm ess\, inf }\, \phi \,\}
\,\le\, u(t,x) \,\le\, \max \{\, 0 \,,\,
{\rm ess\, sup }\, \phi \,\}\;.
\end{equation}

\item[(c)] If $\phi (x) \le b$ for some $b>0$, then, for each $t_0>0$
there exists $\epsilon >0$ such that $u(t,x) \le b -\epsilon$ for all
$(t,x) \in [t_0, \infty) \times [0,1]$. Analogously, if
$\phi (x) \ge a$ for some $a<0$, then, for each $t_0>0$ there exists
$\epsilon >0$ such that $u(t,x) \ge a + \epsilon$ for all
$(t,x) \in [t_0, \infty) \times [0,1]$.

\item[(d)] If $\phi$ belongs to $C([0,1]) \cap \mc H_1$, then the
solution belongs to $C([0, \infty) \times [0,1])$.
\end{itemize}
\end{theorem}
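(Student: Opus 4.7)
The plan is to construct the solution explicitly as $u(t,\cdot) := P^{(R)}_t\phi$ using the spectral expansion \eqref{5-09}, and then to read off the qualitative properties from this representation. For any $t_0>0$, the exponential factors $e^{-\lambda_k t}$ combined with the polynomial bounds \eqref{6-11} on $\|\nabla^n f_k\|_\infty$ and the growth \eqref{6-12} of the eigenvalues ensure that the series defining $u$ converges together with all of its space and time derivatives, uniformly on $[t_0,\infty)\times[0,1]$. Term-by-term differentiation then gives $\partial_t u = \Delta u$ pointwise, and the Robin boundary conditions are inherited from the $f_k$, which establishes assertion (a). To verify that this $u$ is a generalized solution in the sense of Definition \ref{d01}, I would integrate by parts twice using these pointwise properties on $[\varepsilon,t]$ and let $\varepsilon\downarrow 0$, using $P^{(R)}_\varepsilon\phi\to\phi$ in $\ms L^2$ (immediate from dominated convergence in the spectral representation).

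For uniqueness, I would apply a Steklov-type time averaging to legitimately use a smoothed version of $u$ itself as a test function in Definition \ref{d01} when $\phi=0$. This produces the energy identity
\begin{equation*}
\tfrac{1}{2}\|u_t\|_2^2 \,+\, \int_0^t \|\nabla u_s\|_2^2\, ds
\,+\, \int_0^t \Big[\tfrac{1}{A}u_s(0)^2 + \tfrac{1}{B}u_s(1)^2\Big]\, ds \;=\; 0\;,
\end{equation*}
which forces $u\equiv 0$.

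For the $\ms L^\infty$ bound (b), set $M := \max\{0,\operatorname{ess\,sup}\phi\}$ and test the weak formulation against a smooth approximation of $(u_t-M)_+$. The crucial observation is that on the set $\{u>M\}$ one has $u>0$, so the boundary contributions $u_s(j)(u_s-M)_+(j)$ are non-negative at $j=0,1$; the usual chain-rule manipulation then yields $\|(u_t-M)_+\|_2^2\le\|(\phi-M)_+\|_2^2=0$, and a symmetric argument handles the lower bound. For the strict improvement (c), the key tool is the strong maximum principle combined with Hopf's lemma: since $u$ is smooth for $t>0$ and satisfies $\partial_t u=\Delta u$ classically, if $u(t,x_0)=b$ at some interior point then $u$ is identically $b$, which contradicts the Robin condition whenever $b>0$; if the value $b$ were attained at $x=0$, Hopf's lemma would force the outward normal derivative $-\nabla u(t,0)$ to be strictly positive, i.e.\ $\nabla u(t,0)<0$, whereas the Robin condition requires $\nabla u(t,0)=u(t,0)/A=b/A>0$ (the case $x=1$ is symmetric). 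Hence $u<b$ on $(0,\infty)\times[0,1]$; the uniform $\epsilon$ on a compact time interval $[t_0,T_1]$ follows by continuity and compactness, and for large times one invokes $\|u(t)\|_\infty\to 0$ as $t\to\infty$, which is a consequence of \eqref{6-17} applied at a fixed positive time together with the contraction property \eqref{6-18}.

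Part (d) is then immediate from Lemma \ref{l18}: since $\phi\in C([0,1])\cap\mc H^1$ and the $\mc H^1$ and $\mc H_R$ norms are equivalent by \eqref{6-14}, the bound $\|P^{(R)}_t\phi-\phi\|_\infty\le C_0\, t^{1/5}\|\phi\|_{\mc H_R}$ gives continuity of $t\mapsto u(t,\cdot)$ in $C([0,1])$ at $t=0$; combined with the smoothness from (a), this yields joint continuity on $[0,\infty)\times[0,1]$. The most delicate step, in my view, is the combination of Hopf's lemma with the Robin boundary condition in (c), where one must track the sign of the normal derivative at each endpoint carefully against the sign of the hypothetical boundary value $b>0$. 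Everything else reduces to routine spectral and energy computations.
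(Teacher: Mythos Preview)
Your proposal is correct and covers all four parts. The overall architecture matches the paper's, but several technical choices differ. For existence, uniqueness, and the semigroup representation (a), the paper simply invokes Theorems~1 and~3 of \cite[Section~VI.2]{M83}, whereas you carry this out by hand via the spectral expansion and an energy identity; your route is more self-contained but longer. For the $\ms L^\infty$ bound (b), the paper uses the classical parabolic maximum principle of \cite[Chapter~3]{PW84} on $[t_0,\infty)\times[0,1]$ (where the solution is smooth), lets $t_0\downarrow 0$ via Lemma~\ref{l18} for $\phi\in\mc H^1$, and then passes to general $\phi\in\ms L^2$ by approximation; your Stampacchia truncation argument with $(u-M)_+$ is a genuinely different, more PDE-analytic approach that avoids the smooth maximum principle altogether. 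For (c), both proofs rule out a boundary maximum via the Robin condition and an interior maximum via the strong maximum principle; however, to propagate the strict bound from $t_0$ to all of $[t_0,\infty)$ the paper simply reapplies part (b) with initial datum $u(t_0,\cdot)$, which is cleaner than your split into a compact interval plus a large-time decay estimate (and note that \eqref{6-18} alone gives only a non-strict contraction in $\ms L^2$; you really need the spectral gap $\lambda_1>0$ to get $\|u(t)\|_\infty\to 0$). Part (d) is handled identically via Lemma~\ref{l18}.
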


\begin{proof}
Existence and uniqueness of generalized solutions, as well as their
representation in terms of the semigroup $P^{(R)}_t$ is the content of
Theorems 1 and 3 in \cite[Section VI.2]{M83}.

We turn to \eqref{10-01}. Assume first that $\phi$ belongs to
$\mc H_1$.  By \eqref{6-14}, $\phi \in \mc H_R$, and, by Lemma
\ref{l18}, $u(t)$ converges to $\phi$ in $\ms L^\infty ([0, 1])$ as
$t\to 0$.  Since the solution is smooth in $(0,\infty) \times [0,1]$,
by the maximum principle stated in Theorems 2 and 3 of \cite[Chapter
3]{PW84},
\begin{equation*}
\min \{\, 0 \,,\, \inf_{0\le y\le 1} u(t_0,y)  \,\}
\,\le\, u(t,x) \,\le\, \max \{\, 0 \,,\,
\sup_{0\le y\le 1} u(t_0,y) \,\}
\end{equation*}
for all $(t,x) \in [t_0, \infty) \times [0,1]$. Letting $t_0\to 0$, as
$u(t_0)$ converges to $\phi$ in $\ms L^\infty([0,1])$, yields
\eqref{10-01}.

To extend this result to $\phi\in \ms L^2([0,1])$, we consider a
sequence $\phi_n \in \mc H_1$ which converges to $\phi$ in
$\ms L^2([0,1])$ and such that
${\rm ess\, inf }\, \phi \,\le\, \phi_n (x) \,\le\, {\rm ess\,
sup }\, \phi$ for all $0\le x\le 1$. Denote by $u^n$ the solution to
\eqref{1-06h} with initial condition $\phi_n$. Fix $t>0$. By the
result for initial conditions in $\mc H_1$,
\begin{equation*}
\begin{aligned}
& \min \{\, 0 \,,\, {\rm ess\, inf }\, \phi \,\}
\;\le\;
\min \{\, 0 \,,\, \inf_{0\le y\le 1} \phi_n(y) \,\} \\
& \quad \;\le\;
u^n(t,x)
\;\le\;
\max \{\, 0 \,,\, \sup_{0\le y\le 1} \phi_n(y)  \,\}
\,\le\, \max \{\, 0 \,,\,
{\rm ess\, sup }\, \phi \,\}\;.
\end{aligned}
\end{equation*}
for all $0\le x\le 1$. By \eqref{6-17}, $u^n(t)$ converges to $u(t)$
in $\ms L^\infty ([0,1])$. This completes the proof of \eqref{10-01}.

Assume that $\phi (x) \le b$ for some $b>0$.  By \eqref{10-01},
$u(t,x) \le b$ for all $t \ge 0$, $0\le x\le 1$.  Fix $t_0>0$, and
assume that $\max_{0\le x\le 1} u(t_0,x) = b$. As $b>0$, the boundary
conditions imply that the maximum cannot be attained at the
boundary. On the other hand, if it is attained at the interior, by
Theorem 2 of \cite[Chapter 3]{PW84} and by the smoothness of the
solution, $u(t,x) = b$ for all $(t,x) \in (0,t_0] \times [0,1]$. This
is not possible at the boundary. Therefore,
$\max_{0\le x\le 1} u(t_0,x) < b$. By the maximum principle, this
bound can be extended to all $(t,x) \in [t_0, \infty) \times [0,1]$.
The same argument applies to the lower bound.

Assertion (d) follows from Lemma \ref{l18} and the representation of
the solutions.
\end{proof}

It follows from the previous result that the operator $P^{(R)}_t$ is a
contraction in $\ms L^\infty([0,1])$: for all $t\ge 0$,
$f\in \ms L^\infty([0,1])$,
\begin{equation}
\label{5-05}
\Vert\, P^{(R)}_t f\,\Vert_\infty \;\le\;
\Vert\,  f\,\Vert_\infty\;. 
\end{equation}

Recall from \eqref{1-05} that we denote by
$\bar\rho\in \ms M_{\rm ac}$ the unique stationary solution to the
equation \eqref{1-06}.

\begin{definition}
\label{d02}
Fix $\gamma \in \ms L^2( [0,1])$.  A function $u$ in
$\ms L^2([0,T]; \mc H^1)$ is said to be a generalized (or weak)
solution in the cylinder $[0,T]\times [0,1]$ of the equation
\eqref{1-06} if $u(t,x) - \bar\rho$ is a generalized solution to the
initial-boundary problem \eqref{1-06h} with initial condition
$\gamma - \bar\rho$.
\end{definition}

Therefore, a function $u$ in $\ms L^2([0,T]; \mc H^1)$ is a
generalized solution in the cylinder $[0,T]\times [0,1]$ of the
equation \eqref{1-06} if
\begin{equation*}
\begin{aligned}
& \int_0^1  u_t \, H_t \; dx \;-\; \int_0^1 \gamma \, H_0 \; dx 
\;-\; \int_0^t ds \, \int_0^1  u_s \, \partial_s H_s \; dx
=\;- \int_0^t ds \, \int_0^1 \nabla u_s\, \nabla H_s \; dx \\
& \quad 
\;-\; \int_0^t \Big\{\,  \frac{1}{B}\, [\, u_s(1) - \beta\, ] \, H_s(1)
\,+\, \frac{1}{A} \, [\, u_s(0) - \alpha\,] \, H_s(0) \,\Big\} \; ds 
\end{aligned}
\end{equation*}
for every $0<t\le T$, function $H$ in $C^{1,2}([0,T]\times [0,1])$.

\begin{theorem}
\label{mt4}
Fix $\gamma \in \ms L^2([0,1])$.  There exists a unique generalized
solution to \eqref{1-06}.  The solution is smooth in
$(0,\infty)\times [0,1]$ and satisfies the bounds
\begin{equation}
\label{1-08}
\min \{\, \alpha \,,\, {\rm ess\, inf }\,  \gamma \,\}
\,\le\, u(t,x) \,\le\, \max \{\, \beta \,,\,
{\rm ess\, sup }\, \gamma  \,\}
\end{equation}
for all $(t,x) \in [0,\infty) \times [0,1]$. Moreover, if
$\gamma\in \ms M_{\rm ac}$, for all $0<t_0 \le T$ there exists
$\epsilon>0$ such that $\epsilon \le u(t,x) \le 1-\epsilon$ for all
$(t,x) \in [t_0,\infty)\times [0,1]$. Finally, the solution is
continuous in $[0,\infty)\times [0,1]$ if $\gamma$ belongs to
$C([0,1]) \cap \mc H_1$.
\end{theorem}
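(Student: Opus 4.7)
The plan is to reduce the inhomogeneous problem \eqref{1-06} to the homogeneous Robin heat equation \eqref{1-06h} and then invoke Theorem \ref{mt6}. Set $v := u - \bar\rho$. Since $\bar\rho$ is the stationary solution \eqref{1-05}, we have $\Delta\bar\rho = 0$, $(\nabla\bar\rho)(0)=A^{-1}[\bar\rho(0)-\alpha]$ and $(\nabla\bar\rho)(1)=B^{-1}[\beta-\bar\rho(1)]$. Subtracting these from the corresponding conditions for $u$ shows that $u$ is a generalized solution of \eqref{1-06} in the sense of Definition \ref{d02} if and only if $v$ is a generalized solution of \eqref{1-06h} with initial datum $\phi := \gamma - \bar\rho \in \ms L^2([0,1])$. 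Existence, uniqueness and smoothness of $u$ on $(0,\infty)\times[0,1]$ then follow immediately from Theorem \ref{mt6}(a). Moreover, when $\gamma \in C([0,1])\cap\mc H_1$, the function $\phi$ lies in the same space because $\bar\rho\in C^\infty([0,1])$, and Theorem \ref{mt6}(d) yields continuity of $v$, and hence of $u$, on $[0,\infty)\times[0,1]$.

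I would prove the bounds \eqref{1-08} directly on $u$ rather than by transferring the bounds of Theorem \ref{mt6}(b) to $v$, since the latter route produces weaker estimates of the form $\bar\rho \pm \Vert\phi\Vert_\infty$. Fix $M := \max\{\beta,\,\operatorname{ess\,sup}\gamma\}$ and $w := u - M$. Then $w$ solves the homogeneous heat equation with initial datum $\gamma - M \le 0$ and satisfies $(\nabla w)(t,0) = A^{-1}[w(t,0)+M-\alpha]$ and $(\nabla w)(t,1) = -B^{-1}w(t,1) - B^{-1}[M-\beta]$. Suppose for contradiction that $w$ attains a positive maximum at $(t^*,x^*) \in (0,T]\times[0,1]$. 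The strong maximum principle of \cite[Chapter~3]{PW84} rules out $x^*\in(0,1)$; if $x^*=0$, then $w(t^*,0)>0$ forces $\nabla w(t^*,0) > A^{-1}(M-\alpha) \ge A^{-1}(\beta-\alpha) > 0$, while a boundary maximum at $0$ requires $\nabla w(t^*,0)\le 0$, a contradiction; if $x^*=1$, then $w(t^*,1)>0$ and $M\ge\beta$ give $\nabla w(t^*,1)\le -B^{-1}w(t^*,1) < 0$, whereas a boundary maximum at $1$ requires $\nabla w(t^*,1)\ge 0$. Hence $u \le M$, and the matching lower bound is obtained identically by examining $m - u$ with $m := \min\{\alpha,\operatorname{ess\,inf}\gamma\}$.

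For the uniform separation from $0$ and $1$ on $[t_0,\infty)\times[0,1]$ when $\gamma \in \ms M_{\rm ac}$, I would split the time interval. First, exploiting the spectral representation $v(t)=P^{(R)}_t\phi$ from \eqref{5-09}, the spectral gap $\lambda_1>0$ recorded in Appendix \ref{sec04}, and the smoothing bound \eqref{6-17} together with the $\ms L^2$-contraction of $P^{(R)}_t$ given by \eqref{6-18}, one obtains $\Vert v(t)\Vert_\infty \le C_0 e^{-\lambda_1(t-1)}\Vert\phi\Vert_2 \to 0$ as $t\to\infty$. Choosing $T_1$ so large that $\Vert v(t)\Vert_\infty \le \tfrac12\min\{\alpha,1-\beta\}$ for $t\ge T_1$, the inclusion $\bar\rho\in[\alpha,\beta]$ yields $u(t,x)\in[\alpha/2,\,(1+\beta)/2]$, uniformly for $t\ge T_1$. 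On the compact strip $[t_0,T_1]\times[0,1]$, I would show the strict inequalities $0<u<1$ pointwise and then use continuity: if $u(t^*,x^*)=0$ for some $t^*>0$ and $x^*\in(0,1)$, the strong maximum principle applied to the interior minimum propagates $u\equiv 0$ on $[0,t^*]\times[0,1]$, which is incompatible with $\nabla u(t,0) = -\alpha/A\ne 0$; if $x^*=0$, then $u\ge 0$ forces the right derivative at $0$ to be nonnegative, contradicting the boundary condition $\nabla u(t^*,0)=-\alpha/A<0$; the case $x^*=1$ and the symmetric statements $u<1$ are analogous. Compactness then upgrades these strict inequalities to a uniform gap $\epsilon_1>0$ on $[t_0,T_1]\times[0,1]$, and the two regimes combine to yield the required $\epsilon$.

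The main obstacle is the passage from the strict pointwise bounds $0<u(t,x)<1$ on $(0,\infty)\times[0,1]$ to a single gap $\epsilon$ valid on the noncompact half-line $[t_0,\infty)$: the compact part relies on Hopf/strong maximum principle arguments tailored to the Robin conditions, while the tail requires a quantitative $\ms L^\infty$ decay of $v$ to zero obtained via the spectral gap and the smoothing estimate \eqref{6-17}.
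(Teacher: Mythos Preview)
Your reduction to the homogeneous problem \eqref{1-06h} via $v = u - \bar\rho$ is exactly the paper's route (Definition~\ref{d02}), and existence, uniqueness, smoothness and continuity transfer correctly from Theorem~\ref{mt6}. The maximum-principle argument you give for \eqref{1-08} is also essentially the intended one, but there is a gap: you write ``suppose $w=u-M$ attains a positive maximum at $(t^*,x^*)\in(0,T]\times[0,1]$'', yet for general $\gamma\in\ms L^2$ the solution is smooth only on $(0,\infty)\times[0,1]$, so the supremum over $t>0$ need not be attained. The parabolic maximum principle on $[\delta,T]\times[0,1]$ only pushes the maximum back to $\{t=\delta\}$, and you have no control on $\max_x w(\delta,x)$ as $\delta\downarrow 0$ because $P^{(R)}_\delta\phi$ does not converge to $\phi$ in $\ms L^\infty$ for generic $\phi\in\ms L^2$. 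The paper's proof of Theorem~\ref{mt6}(b) closes this gap by first treating $\phi\in\mc H_1$ (where Lemma~\ref{l18} gives uniform convergence at $t=0$) and then approximating $\phi\in\ms L^2$ by $\phi_n\in\mc H_1$ sharing the same essential bounds, passing to the limit via \eqref{6-17}. You need the same two-step structure here.

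Your argument for the uniform gap $\epsilon\le u\le 1-\epsilon$ on $[t_0,\infty)$ is correct but more elaborate than necessary. Once \eqref{1-08} is in hand and your Hopf-type argument yields the strict inequalities $0<u(t_0,x)<1$ for every $x$, continuity of $u(t_0,\cdot)$ on the compact $[0,1]$ already gives $\epsilon_0\le u(t_0,\cdot)\le 1-\epsilon_0$; then simply re-apply \eqref{1-08} with initial time $t_0$ to obtain $\min\{\alpha,\epsilon_0\}\le u(t,x)\le\max\{\beta,1-\epsilon_0\}$ for all $t\ge t_0$. This is how the proof of Theorem~\ref{mt6}(c) proceeds, and it avoids both the split $[t_0,T_1]\cup[T_1,\infty)$ and the spectral-gap decay estimate.
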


\begin{proof}
The proof of this result is similar to the one of Theorem \ref{mt6}. 
\end{proof}

\begin{lemma}
\label{l19}
Let $\gamma$, $(\gamma_n:n\ge 1)$ be a sequence of density profiles in
$\ms M_{\rm ac}$. Assume that $\gamma_n$ converges to $\gamma$ in $\ms
M_{\rm ac}$. Let $u$, $u^n$ be the weak solutions to \eqref{1-06} with
initial conditions $\gamma$, $\gamma_n$, respectively. Then, for all
$t_0>0$, $u^n(t)$ converges to $u(t)$ in $C^2([0,1])$ uniformly in
$t\in [t_0, \infty)$.
\end{lemma}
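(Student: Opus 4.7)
The plan is to use the semigroup representation of the solution supplied by Theorem \ref{mt6}(a) and Definition \ref{d02}. Write $u(t) = \bar\rho + P^{(R)}_t(\gamma - \bar\rho)$ and $u^n(t) = \bar\rho + P^{(R)}_t(\gamma_n - \bar\rho)$, so that
\begin{equation*}
u^n(t) - u(t) \;=\; P^{(R)}_t(\gamma_n - \gamma)
\;=\; \sum_{k\ge 1} e^{-\lambda_k t}\, \langle \gamma_n - \gamma \,,\, f_k\rangle\, f_k \;.
\end{equation*}
Since $\gamma_n\to \gamma$ in $\ms M_{\rm ac}$ and $f_k\in C([0,1])$, each Fourier coefficient satisfies $\langle \gamma_n - \gamma \,,\, f_k\rangle\to 0$ as $n\to\infty$. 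Moreover, since $0\le \gamma_n,\gamma\le 1$, by \eqref{6-11} the coefficients are uniformly bounded: $|\langle \gamma_n - \gamma \,,\, f_k\rangle|\le 2\,\|f_k\|_\infty \le 2C_0$ for all $n,k\ge 1$.

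To establish the $C^2$-convergence uniformly on $t\in[t_0,\infty)$, I would differentiate the series term-by-term (justified by the absolute convergence shown below) and estimate, for $j=0,1,2$,
\begin{equation*}
\|\nabla^j(u^n(t) - u(t))\|_\infty
\;\le\; \sum_{k\ge 1} e^{-\lambda_k t}\, |\langle \gamma_n - \gamma, f_k\rangle|\, \|\nabla^j f_k\|_\infty \;.
\end{equation*}
By \eqref{6-11} and \eqref{6-12}, $\|\nabla^j f_k\|_\infty \le C_0\, \lambda_k^{j/2}\le C_0\, c_1^{j/2}\, k^j$, while for $t\ge t_0$, $e^{-\lambda_k t}\le e^{-c_0 k^2 t_0}$. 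Hence the $k$-th summand is bounded uniformly in $n$ and $t\ge t_0$ by $C_0'\, k^j\, e^{-c_0 k^2 t_0}$, which is summable in $k$.

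Given $\varepsilon>0$, choose $K=K(t_0,\varepsilon)$ so that the tail $\sum_{k>K} C_0'\, k^j\, e^{-c_0 k^2 t_0}<\varepsilon/2$; this controls the tail uniformly in $n$ and $t\ge t_0$. For the head, each of the finitely many coefficients $\langle \gamma_n - \gamma, f_k\rangle$ with $k\le K$ tends to $0$, so for $n$ large enough the head is bounded by $\varepsilon/2$ uniformly in $t\ge 0$. Combining the two bounds yields $\|\nabla^j(u^n(t)-u(t))\|_\infty<\varepsilon$ for $n$ large and all $t\ge t_0$, $j\in\{0,1,2\}$, which is the claim.

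The only subtle point is justifying the pointwise convergence $\langle \gamma_n - \gamma, f_k\rangle\to 0$ from convergence in $\ms M_{\rm ac}$; this is immediate because each $f_k$ is continuous on $[0,1]$ and the weak topology on $\ms M$ tests precisely against continuous functions. The whole argument reduces to a Gaussian-beats-polynomial tail estimate coupled with the finite-dimensional observation that finitely many spectral coefficients converge; there is no serious obstacle beyond separating these two regimes with the right choice of cutoff $K$.
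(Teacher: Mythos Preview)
Your proof is correct and follows essentially the same approach as the paper: both use the spectral representation $u^n(t)-u(t)=P^{(R)}_t(\gamma_n-\gamma)=\sum_{k\ge 1}e^{-\lambda_k t}\langle\gamma_n-\gamma,f_k\rangle f_k$, exploit that the coefficients $\langle\gamma_n-\gamma,f_k\rangle$ are bounded and converge to zero (since $f_k$ is continuous), and control derivatives via the exponential decay $e^{-\lambda_k t_0}$. The only cosmetic difference is that the paper obtains $C^2$-convergence by first proving uniform convergence of $u^n-u$ and of $\Delta(u^n-u)$ (the Laplacian introduces a factor $-\lambda_k$), then recovers $\nabla$ from these two; you instead estimate $\nabla^j$ directly for $j=0,1,2$ using the bounds \eqref{6-11}, which is equally valid and slightly more explicit.
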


\begin{proof}
Fix $t_0>0$.  By Definition \ref{d02}, Theorem \ref{mt6}.(a) and
\eqref{5-09}, for every $t\ge 0$,
\begin{equation*}
u^n(t) \,-\, u(t)\; =\; P^{(R)}_t (\gamma_n - \gamma)\;=\;
\sum_{k\ge 1} e^{-\lambda_k t}\, \< \gamma_n - \gamma \,,\, f_k\>\, f_k\;.
\end{equation*}
By this identity and the hypothesis, since $\gamma$, $\gamma_n$ are
bounded, $u^n(t)$ converges to $u(t)$ in $C([0,1])$, uniformly for
$t\ge t_0$. Taking the Laplacian on both sides of this identity
produces on the right-hand side an extra factor $-\,
\lambda_k$. Hence, $\Delta u^n(t)$ converges to $\Delta u(t)$ in
$C([0,1])$, uniformly for $t\ge t_0$. Uniform convergence in
$C^2([0,1])$ follows from the two previous convergences.
\end{proof}

\begin{remark}
\label{rm06}
Fix $k\ge 1$. The same proof yields that, for all $t_0>0$, $u^n(t)$
converges to $u(t)$ in $C^{2k}([0,1])$, uniformly in
$t\in [t_0, \infty)$.
\end{remark}

We conclude this section defining weak solutions to equation
\eqref{5-01} and stating a result on existence and uniqueness.

\begin{definition}
\label{d03}
Fix $\gamma\in\ms L^1([0,1])$, $H\in C^{0,1}([0,T]\times [0,1])$.  A
function $u$ in $\ms L^2([0,T]; \mc H^1)$ is said to be a generalized,
or weak, solution in the cylinder $[0,T]\times [0,1]$ of the equation
\eqref{5-01} if
\begin{equation}
\label{8-01}
\begin{aligned}
& \int_0^1  u_t \, G_t \; dx \;-\; \int_0^1 \gamma \, G_0 \; dx 
\;-\; \int_0^t ds \, \int_0^1  u_s \, \partial_s G_s \; dx \\
&\quad \;=\; 
\int_0^t ds \, \int_0^1 \big\{\,
-\, \nabla u_s \, \nabla G_s \, +\, 2\, \sigma (u_s)\,
\nabla H_s \,\nabla G_s \,\big\} \; dx \\
&\quad  \;+\; \int_0^t \big\{\,  \mf p_{\beta,B} ( u_s(1), H_s(1)) 
\, G_s(1) \,+\, \mf p_{\alpha,A} ( u_s(0) , H_s(0)) \, G_s(0)  \,\big\} \; ds
\end{aligned}
\end{equation}
for every $0<t\le T$ and function $G$ in $C^{1,2}([0,T]\times [0,1])$.
\end{definition}

Next result is taken from \cite{FGLN2021}

\begin{theorem}
\label{mt5}
Fix $\gamma\in \ms M_{\rm ac}$ and $H$ in
$C^{0,1}([0,T]\times [0,1])$. Then, there exists a unique weak
solution to \eqref{5-01}.  Moreover, $0\le u(t,x)\le 1$ a.s. in
$[0,T]\times [0,1]$.
\end{theorem}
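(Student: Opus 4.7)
The statement is a combination of three assertions — existence, uniqueness, and the bound $0\le u\le 1$ — each of which admits a standard treatment, although the nonlinear Robin-type boundary fluxes $\mf p_{\alpha,A}$ and $\mf p_{\beta,B}$ require some care. For existence, the hydrodynamic-limit route used in \cite{FGLN2021} is natural: introduce the weakly asymmetric exclusion process obtained from $\ms L_N$ by Radon--Nikodym tilting the jump rates by $\exp\{H(t,x+\mf e) - H(t,x)\}$ in the bulk and by $\exp\{\pm H(t,\cdot)\}$ at the endpoints, chosen so that the Dynkin formula for the empirical measure matches \eqref{8-01} up to corrections that vanish in $N$. Standard entropy and energy bounds give tightness of the laws $\bb Q^N$ on $D([0,T],\ms M)$, the one-block and two-block estimates replace local functionals by functions of the macroscopic density, and a boundary-replacement lemma handles the $\mf p_{\bullet,\bullet}$ terms at $x=0,1$. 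Any limit point is concentrated on weak solutions in the sense of Definition \ref{d03}, and since each empirical measure is a sum of $N^{-1}$-weighted Dirac masses, its density lies in $[0,1]$; this bound is preserved under weak limits, giving $0\le u(t,x)\le 1$ a.e.

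For uniqueness, let $u^{(1)}, u^{(2)}$ be two weak solutions starting from the same $\gamma$, and set $w := u^{(1)}-u^{(2)} \in \ms L^2([0,T];\mc H^1)$. Subtracting \eqref{8-01} for the two solutions and noting that $\partial_t w$ belongs to $\ms L^2([0,T];(\mc H^1)^*)$ by the equation itself, the chain rule yields
\begin{equation*}
\tfrac12 \|w(t)\|_2^2 \,+\, \int_0^t \|\nabla w_s\|_2^2\,ds
\;=\; 2\int_0^t \bigl\langle [\sigma(u^{(1)}_s) - \sigma(u^{(2)}_s)]\,\nabla H_s,\,\nabla w_s\bigr\rangle\, ds \,+\, \mc R(t),
\end{equation*}
where $\mc R(t)$ collects the boundary contributions at $x=0$ and $x=1$ coming from $\mf p_{\alpha,A}$ and $\mf p_{\beta,B}$ tested against $w_s(0)$ and $w_s(1)$. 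The bulk cross-term is estimated using that $\sigma$ is Lipschitz on $[0,1]$ and $\|\nabla H\|_\infty<\infty$, so by Young's inequality it is bounded by $\tfrac12\|\nabla w_s\|_2^2 + C\|w_s\|_2^2$. The map $a\mapsto \mf p_{\varrho,D}(a,M)$ is Lipschitz on $[0,1]$ uniformly for $M$ bounded (its derivative equals $-(1/D)\{\varrho e^M + (1-\varrho)e^{-M}\}$), so the boundary contributions in $\mc R(t)$ are controlled by $C(|w_s(0)|^2 + |w_s(1)|^2)$. The standard trace inequality $|v(x_0)|^2\le \varepsilon \|\nabla v\|_2^2 + C_\varepsilon \|v\|_2^2$ for $v\in\mc H^1([0,1])$, with $\varepsilon$ small, absorbs these squared traces into the leftover $\tfrac12\|\nabla w_s\|_2^2$. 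This leaves the differential inequality $\|w(t)\|_2^2 \le C\int_0^t \|w_s\|_2^2\,ds$, and Gronwall forces $w\equiv 0$.

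The main obstacle will be making the chain-rule step rigorous, since a weak solution has only $u\in \ms L^2([0,T];\mc H^1)$ and the boundary traces live a priori only in $\ms L^2([0,T])$. The usual remedy is Steklov averaging in time: replace $w$ in the test function by $w_h := h^{-1}\int_t^{t+h} w_s\,ds$, which is admissible in \eqref{8-01}, derive the identity for $w_h$, and pass to the limit $h\downarrow 0$ using $\partial_t w \in \ms L^2([0,T];(\mc H^1)^*)$ and the dominated convergence theorem for the boundary terms. Once this bookkeeping is done — and modulo invoking the hydrodynamic limit of \cite{FGLN2021} for existence — the three conclusions of the theorem follow.
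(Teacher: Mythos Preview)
The paper does not actually prove this theorem: immediately before the statement it says ``Next result is taken from \cite{FGLN2021}'' and gives no argument. So there is no in-paper proof to compare against; the result is imported wholesale from the companion article on the dynamical large deviations.

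Your sketch is consistent with that reference and is essentially correct. Existence and the pointwise bounds $0\le u\le 1$ via the hydrodynamic limit of the tilted (weakly asymmetric) exclusion process is exactly the route taken in \cite{FGLN2021}, and the $\ms L^2$ energy/Gronwall argument for uniqueness is the standard one. One small sharpening: you observe that $\partial_a \mf p_{\varrho,D}(a,M) = -(1/D)\{\varrho e^M + (1-\varrho)e^{-M}\}<0$, which in fact means the boundary contribution $\mc R(t)$ has a \emph{favorable} sign---each summand is of the form $(\partial_a \mf p)\,|w_s|^2 \le 0$---so it can simply be dropped rather than absorbed via the trace inequality. Your treatment with the trace estimate still works, it is just not needed. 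The Steklov-averaging step to justify the chain rule is the right bookkeeping device given the regularity available from Definition~\ref{d03}.
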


\smallskip\noindent{\bf Acknowledgements:} The last author wishes to
thank D. Gabrielli for fruitful discussions.

\end{document}